\definecolor{shadecolor}{rgb}{0.8,0.8,0.8}
\newtheorem{theorem}{Theorem}[section]
\newtheorem{lemma}[theorem]{Lemma}
\newtheorem{proposition}[theorem]{Proposition}
\newtheorem{corollary}[theorem]{Corollary}
\newtheorem{definition}[theorem]{Definition}
\newcommand{\specexercise}[1]{}
\newenvironment{proof}{{\flushleft \emph{Proof}:}}{\hfill\ding{110}}
\newenvironment{remark}{{\flushleft \fontfamily{pzc}\bfseries\large Remark:}}{}
\newcommand{\g}{\mathfrak{g}}
\newcommand{\Vol}{\text{Vol}}
\newcommand{\M}{\mathcal{M}}
\newcommand{\R}{\mathbb{R}}
\newcommand{\N}{\mathbb{N}}
\newcommand{\Z}{\mathbb{Z}}
\newcommand{\dist}{\operatorname{dist}}
\newcommand{\id}{\operatorname{Id}}
\newcommand{\supp}{\operatorname{supp}}
\newcommand{\Diff}{\operatorname{Diff}}
\newcommand{\Diffc}{\operatorname{Diff}_\text{c}}
\newcommand{\tzeta}{\tilde{\zeta}}
\newcommand{\ttheta}{\tilde{\theta}}
\newcommand{\tTheta}{\tilde{\Theta}}
\newcommand{\pl}{\partial}
\newcommand{\ind}{\mathds{1}}
\newcommand{\beq}{\begin{equation}}
\newcommand{\eeq}{\end{equation}}
\newcommand{\brk}[1]{\left(#1\right)}          % \brk{.}     => (.)
\newcommand{\Brk}[1]{\left[#1\right]}          % \Brk{.}     => [.]
\newcommand{\BRK}[1]{\left\{#1\right\}}        % \BRK{.}     => {.}
\newcommand{\Abs}[1]{\left|#1\right|}        % \Abs{.}     => |.|
\numberwithin{equation}{section}
\begin{document}

\title{Vanishing geodesic distance for right-invariant Sobolev metrics on diffeomorphism groups}
\author{Robert L.~Jerrard\footnote{Department of Mathematics, University of Toronto.} \,and Cy Maor\footnotemark[1]}
\date{}
\maketitle

\begin{abstract}
We study the geodesic distance induced by right-invariant metrics on the group $\Diffc(\M)$ of compactly supported diffeomorphisms, for various Sobolev norms $W^{s,p}$.
Our main result is that the geodesic distance vanishes identically on every connected component whenever $s<\min\{n/p,1\}$, where $n$ is the dimension of $\M$.
We also show that previous results imply that whenever $s > n/p$ or $s \ge 1$, the geodesic distance is always positive.
In particular, when $n\ge 2$, the geodesic distance vanishes if and only if $s<1$ in the Riemannian case $p=2$, contrary to a conjecture made in Bauer et al.~\cite{BBHM13}.
\end{abstract}

\tableofcontents

%%%%%%%%%%%%%%%%%%%%%%%%%%%%%%%%%%%%%%%%%%%%%%%%%%%%%%%%%%%%%%%%%%%%%%%%%%%%%%%%%%%%%%%%%%
\section{Introduction}
In this paper we mostly resolve a question %raised in several papers 
about the geometry of the group $\Diffc(\M)$ of compactly supported diffeomorphisms of a Riemannian manifold $\M$, endowed with a right-invariant Sobolev metric;
see Section \ref{sec_2} below for the precise definition, as well as assumptions on $\M$.
Sobolev metrics on $\Diffc(\M)$ arise
in a variety of contexts. In particular, such a metric
turns $\Diffc(\M)$ into an infinite-dimensional Riemannian manifold, 
and a number of partial differential equations
relevant to fluid dynamics can be formulated as  geodesic flow in manifolds of this sort.
Sobolev metrics on $\Diffc(\M)$ are also relevant to the study of what are known as {\em shape spaces},
a concept with connections to areas such as computer vision and computational anatomy. We refer to \cite{BBM14} for a discussion of these and other sources of motivation.

A metric on $\Diffc(\M)$ gives rise to a notion of the length of  a path, and the induced
geodesic distance between a pair of elements  
is obtained by taking the infimum
of the lengths of all paths connecting the two diffeomorphisms.
If the metric is induced by the  $H^s$ Sobolev inner product for $s$ small enough,
the geodesic distance may vanish in the strong sense that
any two diffeomorphisms that can be connected
by a path can in fact be connected by a path of arbitrarily small length.
For large enough $s$, by contrast, the geodesic distance between any
two distinct diffeomorphisms is positive. Our aim is to identify the
precise threshold
that separates these two cases.

This question grows out of work of \cite{MM05}, who proved  (among other results) that the $H^s$ geodesic distance vanishes when $s=0$ and is positive when $s=1$. These results were extended to certain $s\in (0,1)$ by \cite{BBHM13,BBM13}, who proved that  for $\M$ of bounded geometry, the $H^s$ geodesic distance vanishes if $s<1/2$. They also proved that for one-dimensional manifolds, the geodesic distance
is positive when $s>1/2$, and for $\M =  \mathbb S^1$, it vanishes in the borderline case $s=\frac 12$.\footnote{Very shortly after we completed this manuscript, a proof that $H^{1/2}$ geoedesic distance vanishes for all one-dimensional manifolds was posted, see \cite{BHP18}.} 
Motivated by these facts, they conjectured that for arbitrary manifolds, the
induced $H^s$ geodesic distance should vanish if and only if $s\le 1/2$.

It turns out to be illuminating to embed this conjecture in a larger family of questions,
about the vanishing of the geodesic distance induced
by right-invariant fractional Sobolev norms $W^{s,p}$, for $1\le p < \infty$, see again Section \ref{sec_2} for details (note that we do not consider the case $p=\infty$ in this paper unless explicitly noted).
The arguments used by \cite[Theorem~5.7]{MM05}, \cite[Theorem~4.1]{BBHM13} then imply the following:
\begin{theorem}[\cite{MM05,BBHM13}]
The induced $W^{s,p}$-distance is positive whenever
$sp>n$ or $s\ge 1$.
\end{theorem}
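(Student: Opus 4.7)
The plan is to exhibit, in each of the two regimes, a functional $F:\Diffc(\M)\to\R$ that separates distinct diffeomorphisms and is $C$-Lipschitz with respect to the $W^{s,p}$-length, in the sense that $|F(\psi_1)-F(\psi_0)|\le CL(\varphi)$ for every path $\varphi:[0,1]\to\Diffc(\M)$ joining $\psi_0$ to $\psi_1$. Here $L(\varphi)=\int_0^1\|u_t\|_{W^{s,p}}\,dt$ by right-invariance, where $u_t=\dot\varphi_t\circ\varphi_t^{-1}$ is the Eulerian velocity of $\varphi_t$.

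When $sp>n$, the Sobolev embedding $W^{s,p}\hookrightarrow L^\infty$ applied pointwise in time gives $\|u_t\|_\infty\le C\|u_t\|_{W^{s,p}}$, and then
$$\dist_\M\bigl(\psi_0(x),\psi_1(x)\bigr)\le\int_0^1|u_t(\varphi_t(x))|\,dt\le CL(\varphi)$$
for every $x\in\M$. Consequently $F(\psi):=\dist_\M(x_0,\psi(x_0))$ separates $\psi_0$ from $\psi_1$ whenever $x_0$ is chosen so that $\psi_0(x_0)\ne\psi_1(x_0)$.

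When $s\ge 1$ the sup-norm embedding is unavailable, so I would instead track volume. For a bounded Borel set $A$, set $h(t)=|\varphi_t(A)|$; differentiation gives $h'(t)=\int_{\varphi_t(A)}\operatorname{div}u_t\,dx$, and the pointwise bound $|\operatorname{div}u_t|\le\sqrt n\,|Du_t|$, H\"older's inequality, and the embedding $\|Du\|_{L^p}\le C\|u\|_{W^{s,p}}$ (valid for $s\ge 1$) yield
$$|h'(t)|\le C\,h(t)^{1-1/p}\|u_t\|_{W^{s,p}},\qquad\text{hence}\qquad\left|\frac{d}{dt}h(t)^{1/p}\right|\le C\|u_t\|_{W^{s,p}}.$$
Integrating in $t$ produces $\bigl||\psi_1(A)|^{1/p}-|\psi_0(A)|^{1/p}\bigr|\le CL(\varphi)$, and any $A$ with $|\psi_0(A)|\ne|\psi_1(A)|$ gives the desired lower bound.

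The main obstacle is the subcase of $s\ge 1$ in which $\psi_0^{-1}\psi_1$ preserves Lebesgue measure: the functional $\psi\mapsto|\psi(A)|^{1/p}$ then degenerates. I would resolve this by applying the same differentiation-and-H\"older scheme to the richer functional $F_{A,B}(\psi)=|\psi(A)\cap B|$, or equivalently to $\int(f\circ\psi)\,g\,dx$ for test functions $f,g\in C_c^\infty(\M)$; time differentiation now produces a flux through $\partial B\cap\varphi_t(A)$ which is again controlled by $\|u_t\|_{L^p}\le C\|u_t\|_{W^{s,p}}$ via the coarea formula, and the existence of $(A,B)$ or $(f,g)$ separating $\psi_0$ from $\psi_1$ is automatic from $\psi_0\ne\psi_1$. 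Combined with the case $sp>n$, this yields the theorem.
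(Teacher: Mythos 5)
Both halves of your proposal track the paper's own argument. For $sp>n$, your Sobolev-embedding bound on $\dist_\M(\psi_0(x),\psi_1(x))$ is verbatim the argument the paper cites from \cite{BBHM13}. For $s\ge 1$, the functional you arrive at after correcting your volume-based warm-up, $\int(f\circ\psi)\,g\,dx$, is exactly the functional behind \cite[Theorem~5.7]{MM05} (with $\rho=g$, $\zeta=f$); the paper's integration-by-parts identity is the differentiated form of it.

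Two points in the $s\ge 1$ case need repair, however, before the estimate actually closes. First, the scheme does not bound the derivative by $\|u_t\|_{L^p}$: time-differentiating $\int(f\circ\psi_t^{-1})g$ and using the transport equation for $f\circ\psi_t^{-1}$ produces $\int(f\circ\psi_t^{-1})\operatorname{div}(g\,u_t)$, and since $\operatorname{div}(g\,u_t)=\nabla g\cdot u_t+g\,\operatorname{div}u_t$, the bound requires the full $\|u_t\|_{W^{1,p}}$, not just $\|u_t\|_{L^p}$. Your volume warm-up already shows you need $\|\operatorname{div}u_t\|_{L^p}$, so this should not be a surprise; and the coarea remark for the sharp functional $|\psi(A)\cap B|$ does not rescue the $L^p$-only claim, because an $L^p$ vector field has no well-defined flux through a fixed hypersurface. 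Second, the functional must be evaluated on the inverse flow: $f\circ\varphi_t$ does not satisfy the transport equation (differentiating it produces the uncontrolled Jacobian $D\varphi_t$), whereas $f\circ\varphi_t^{-1}$ does, which is precisely why the paper sets $\psi_t:=\phi_0\circ\phi_t^{-1}$. By right-invariance (Lemma~\ref{lm:right_invariance}) one may replace $\psi$ by $\psi^{-1}$, so this is a fixable bookkeeping point, but as written your $\int(f\circ\psi)g$ does not match your flux computation. With these corrections, your argument reduces to the quoted one.
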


Our main result shows that these results
are essentially sharp: 
\begin{theorem}
The induced $W^{s,p}$-distance is vanishes whenever
$sp<n$ and $s< 1$.
\end{theorem}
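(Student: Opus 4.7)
The plan is to show that the set
\[
G_0 := \{\varphi \in \Diffc(\M)^0 : \dist(\id,\varphi)=0\}
\]
coincides with the full identity component $\Diffc(\M)^0$; right-invariance of the $W^{s,p}$-distance then propagates vanishing to every connected component of $\Diffc(\M)$, since these are cosets of $\Diffc(\M)^0$. Right-invariance makes $G_0$ into a subgroup, and a direct path-comparison argument (building paths of the form $t \mapsto \psi \varphi_t$ and controlling $\|\psi_*(v)\|_{W^{s,p}} \lesssim_\psi \|v\|_{W^{s,p}}$) shows that $G_0$ is closed under conjugation, hence normal. By the Thurston--Mather simplicity theorem, $\Diffc(\M)^0$ is a simple group, so it suffices to exhibit a \emph{single} non-identity element of $G_0$.

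The main construction is local: working in a coordinate chart, we may assume $\M = \R^n$ and we choose some convenient non-trivial target $\varphi \in \Diffc(\R^n)^0$ supported in a bounded set. The goal is to build, for every $\e > 0$, a time-dependent velocity field $v^\e$ with compact support whose time-one flow equals $\varphi$ and whose $W^{s,p}$-length $\int_0^1 \|v^\e_t\|_{W^{s,p}} \, dt$ tends to zero with $\e$. The driving scaling is that a bump $u$ of unit amplitude supported in a ball of radius $\e$ obeys $\|u\|_{W^{s,p}} \lesssim \e^{n/p - s}$; the assumption $sp < n$ is precisely what makes this exponent positive, so each small-scale move is cheap. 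The assumption $s < 1$ enters because in this range the fractional seminorm is not controlled from below by a first-derivative contribution, while for $s \geq 1$ such a contribution is exactly what produces the lower bound underlying the companion positivity theorem. The construction will decompose $\varphi$ into many small nearly-parallel moves at scale $\e$, executed by velocity fields supported in nearly disjoint thin channels, whose combined displacement assembles to the macroscopic $\varphi$.

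The chief obstacle is arranging the many concentrated moves so that their time-one composition is the \emph{prescribed} diffeomorphism $\varphi$, rather than merely some weak or approximate displacement depending on $\e$. Concentrating a velocity field in a thin channel automatically lowers its $W^{s,p}$-norm, but also shrinks the region it effectively moves; one must coordinate many such moves in parallel, accounting for the mutual transport of neighboring channels and the nonlinearity of the flow (the argument of $v^\e_t$ composed with the evolving diffeomorphism), while still exploiting the decisive scaling $\e^{n/p-s} \to 0$. Once a single non-identity element of $G_0$ is produced in this way, the simplicity reduction yields the theorem.
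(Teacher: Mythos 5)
Your reduction---normality of the zero-distance subgroup plus Thurston--Mather simplicity, so that it suffices to exhibit one non-identity $\varphi$ with $\dist_{s,p}(\id,\varphi)=0$---is correct and is exactly the paper's Proposition 2.5. You also correctly identify the subcritical scaling: a unit bump at scale $\e$ has $\|u\|_{W^{s,p}}\lesssim\e^{n/p-s}$, small precisely when $sp<n$. But the proposal stops where the proof begins. The scheme you sketch---tiling the cube by $\approx\e^{-(n-1)}$ channels of width $\e$ and moving each by $O(1)$---fails: each such move costs $\approx\e^{(n-1)/p-s}$ (interpolating $\|u\|_{L^p}\approx\e^{(n-1)/p}$ with $\|u\|_{W^{1,p}}\approx\e^{(n-1)/p-1}$), so the total is $\gtrsim\e^{(n-1)(1/p-1)-s}$, which diverges for $p>1$; doing all channels at once gives an $O(1)$ velocity field, and cycling through them in time gives the same total. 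Since $\varphi$ must displace a set of full measure, thinness of the support alone cannot close the argument.

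The missing idea is \emph{compression}, which is also where $s<1$ enters the construction rather than just the positivity heuristic. The paper first splits $\Phi$ into $2^{n-1}$ factors supported on disjoint unions of $\approx k^{n-1}$ strips, then for each factor: (i) squeezes the strips transversally by a factor $e^{-\alpha}$, at cost $\lesssim\alpha k^{-(1-s)}$, which is $o(1)$ because $s<1$ and $\alpha\ll k^{1-s}$; (ii) transports the squeezed strips by a flow whose velocity at each instant is supported on a union of tiny cubes of total measure $\approx k^{n-1}\lambda^{n}$ (a traveling bump \emph{within} each strip, not the whole strip), and $sp<n$ then makes this cost vanish; (iii) undoes the squeeze. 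Moreover the natural transport velocity is a step function, which lies outside $W^{s,p}$ once $sp\ge 1$, so it must be mollified and the resulting miss of the target corrected by an affine homotopy---this regularization and error control form the bulk of the technical work. Without the compression step and without a transport flow of small instantaneous support, the scaling you invoke does not assemble into a path that both hits the prescribed target and has vanishing length.
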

These results are stated in a more detailed way in Theorem~\ref{main_thm}.
In particular, contrary to the conjecture of \cite{BBHM13},  
we have the following corollary:
\begin{corollary}
If $\M$ is a manifold of dimension at least $2$, then the $H^s$ geodesic distance vanishes if and only if $s<1$.
\end{corollary}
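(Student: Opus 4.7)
The plan is to derive the corollary directly from the two theorems immediately preceding it by specializing to $p=2$ and exploiting the standing hypothesis $n \ge 2$. Since the $H^s$ metric is by definition the $W^{s,2}$ metric, both theorems apply with $p=2$, and the entire argument reduces to comparing the numerical thresholds.

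First I would handle the ``if'' direction. Suppose $s<1$. Because $n\ge 2$ and $p=2$, we have $sp = 2s < 2 \le n$, so the hypothesis $sp < n$ of the vanishing theorem (Theorem~\ref{main_thm} in its detailed form) is automatically satisfied whenever $s<1$. Applying that theorem on each connected component of $\Diffc(\M)$ yields that the induced $H^s$ geodesic distance vanishes identically.

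For the ``only if'' direction, suppose $s\ge 1$. Then the positivity theorem of \cite{MM05,BBHM13} (the first theorem in the excerpt) applies directly with $p=2$, giving that the $H^s$ geodesic distance between distinct diffeomorphisms is strictly positive.

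There is no substantive obstacle here: the corollary is a pure specialization of the two main theorems, and the only thing to verify is the arithmetic of the thresholds. The key observation is that the condition $sp < n$ becomes vacuous relative to $s < 1$ exactly when $p \le n$, and for the Riemannian case $p = 2$ this is guaranteed precisely by the hypothesis $n \ge 2$. One should note in passing that the argument would genuinely break down in dimension $n=1$ on the interval $s \in [1/2,1)$, which is consistent with (and indeed explains why the corollary excludes) the one-dimensional setting where \cite{BBHM13} showed positivity of the $H^s$ distance for $s > 1/2$.
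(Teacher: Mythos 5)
Your argument is correct and is essentially the same as the paper's: the corollary is an immediate specialization of the two preceding theorems to $p=2$, using that $n\ge 2$ makes the condition $sp<n$ automatic whenever $s<1$. Your remark on the $n=1$ case is also essentially right (with the small refinement that the positivity theorem, via $sp>n$, actually does apply on $(1/2,1)$ when $n=1$, which is precisely what makes the corollary false there).
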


We conclude this informal introduction by describing some ingredients in our analysis.
First, we remark that the positivity proof of \cite[Theorem~5.7]{MM05} can be understood to show that for any $s\ge 0$, paths in $\Diffc(\M)$ of short length
must involve compression of (parts of) the support of the diffeomorphism into
very small sets, and that this compression can always be detected by $W^{s,p}$-norms
when $s\ge 1$.
The positivity proof of \cite[Theorem~4.1]{BBHM13} relies on the observation that 
any motion, no matter how small its support, can always be detected by
any $W^{s,p}$-norm that embeds into $L^\infty$.
This property holds whenever $sp>n$.

If $s<1$, it turns out that one can compress parts of the manifold into arbitrarily small regions, for arbitrarily small cost; and if $sp<n$ 
one can transport small regions of the manifold for a long distance with small cost. Therefore, if $s<\min \{n/p,1\}$, one might expect  the geodesic distance to vanish. 
Our proof that this is indeed the case has two main points. The first is to devise a strategy for alternating compression and transport of small sets in order to flow the identity mapping, say, onto a fixed target diffeomorphism at low cost. The second point is that the transport step requires some care in order to arrive at (or sufficiently close to) a fixed target, while still remaining small in the relevant norms. 
We achieve this
by first constructing a flow, relying in part on ideas of \cite{BBHM13}, that exactly reaches the desired target;
however in order for this flow to be in the right Sobolev space we need to regularize it.
This regularization, and the error controlling that follows it, form the majority of the technical part of this paper.

Our heuristic arguments, described above, for vanishing geodesic distance apply also in the
endpoint case $s=\frac n p <1$, since $W^{n/p,p}$ also fails to embed into $L^\infty$ in this case.
As mentioned above, it is known that the $W^{1/2,2}$-induced geodesic distance
vanishes on $\Diffc(\mathbb S^1)$,
and although we do not present the details, the proof of
\cite{BBHM13} can be readily extended to $W^{1/p,p}$ for all 
$1<p<\infty$. In general, however, although it is natural to conjecture that
the $W^{n/p,p}$-induced geodesic distance vanishes on $n$ dimensional manifolds when $p>n$,
the critical scaling makes constructions
delicate, and this question remains open for $\dim\M > 1$.

%%%%%%%%%%%%%%%%%%%%%%%%%%%%%%%%%%%%%%%%%%%%%%%%%%%%%%%%%%%%%%%%%%%%%%%%%%%%%%%%%%%%%%%%%%
\section{Preliminaries and main result}\label{sec_2}

Let $(\M,\g)$ be a Riemannian manifold of \emph{bounded geometry}, that is $(\M,\g)$ has a positive injectivity radius and all the covariant derivatives of the curvature are bounded: $\|\nabla^i R\|_\g < C_i$ for $i\ge 0$.
We denote by $\Gamma_c(T\M)$ the Lie-algebra of compactly supported vector fields on $\M$, 
and by $\Diffc(\M)$ the group of compactly supported diffeomorphisms of $\M$, that is the diffeomorphisms $\phi$ for which the closure of $\{\phi(x)\ne x\}$ is compact.

A smooth path $\{\phi_t \}_{ t\in [0,1]}$ in $\Diffc(\M)$ 
can be described in terms of the velocity vector fields $\{u(t,\cdot)\}_{t\in[0,1]}$ such that 
$\pl_t \phi_t =  u(t, \phi_t)$ for $0\le t\le 1$. 
Given $\{\phi_t\}$, we find $u$ by setting  $u(t, \cdot) := \partial_t \phi_t \circ \phi_t^{-1}$, and conversely $\{\phi_t\}_{t\in [0,1]}$ 
may be recovered from $u$ and $\phi_0$ by standard ODE theory. 
Given a norm $\|\cdot\|_{A}$ on $\Gamma_c(T\M)$ we can then define the \textbf{geodesic distance} between $\phi_0,\phi_1\in \Diffc(\M)$ by
\[
\dist_A(\phi_0,\phi_1) := \inf\BRK{\int_0^1 \|u(t) \|_{A} \,dt \,\,:\,\,
\mbox{ $\pl_t \phi_t =  u(t, \phi_t)$ for $0\le t\le 1$ }}.
\]
Note that $\dist_A$ forms a semi-metric on $\Diffc(\M)$, that is it satisfies the triangle inequality but may fail to be positive.

This is the geodesic distance of the \textbf{right-invariant Finsler metric on $\Diffc(\M)$} induced by $\|\cdot\|_{A}$, which is defined as
\[
\|X\|_{\phi,A} := \|X\circ \phi^{-1}\|_A
\]
for every $\phi\in \Diffc(\M)$ and $X\in T_\phi \Diffc(\M)$.
If $\|\cdot\|_A$ comes from an inner-product, it defines a Riemannian metric on $\Diffc(\M)$ in a similar manner.
See \cite{BBHM13} for more details.
The right-invariance of $\dist_A$ is summarized in the following lemma:

%%%%%%%%%%%%%%%%%
\begin{lemma}[Right-invariance]
\label{lm:right_invariance}
For $\psi,\phi_0,\phi_1\in \Diffc(\M)$, we have 
\[
\dist_A(\phi_0 \circ \psi, \phi_1 \circ \psi) = \dist_A(\phi_0,\phi_1).
\]
In particular,
\[
\dist_A(\id,\psi) = \dist_A(\id,\psi^{-1}),
\]
and 
\[
\dist_A(\id,\phi_1 \circ \phi_0) \le \dist_A(\id, \phi_1) + \dist_A(\id, \phi_0).
\]
\end{lemma}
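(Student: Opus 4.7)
The plan is to prove the three assertions in order, with the first being the workhorse from which the other two follow essentially for free.

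For the right-invariance identity, the key observation is that right-translation by $\psi$ preserves the velocity field of a path. Specifically, if $\{\phi_t\}_{t\in[0,1]}$ is a smooth path in $\Diffc(\M)$ connecting $\phi_0$ to $\phi_1$ with velocity field $u(t,\cdot) = \partial_t\phi_t \circ \phi_t^{-1}$, then $\{\phi_t\circ\psi\}$ is a smooth path from $\phi_0\circ\psi$ to $\phi_1\circ\psi$, and differentiation gives
\[
\partial_t(\phi_t\circ\psi)(x) = (\partial_t\phi_t)(\psi(x)) = u\bigl(t,\phi_t(\psi(x))\bigr) = u\bigl(t,(\phi_t\circ\psi)(x)\bigr),
\]
so the associated velocity field is again $u(t,\cdot)$. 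Hence the two paths have identical lengths $\int_0^1 \|u(t)\|_A\,dt$. Taking the infimum over all such paths yields $\dist_A(\phi_0\circ\psi,\phi_1\circ\psi)\le \dist_A(\phi_0,\phi_1)$, and the reverse inequality is obtained by applying the same observation with $\psi$ replaced by $\psi^{-1}$.

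For the triangle inequality, I would first note the general statement $\dist_A(\phi_0,\phi_2)\le \dist_A(\phi_0,\phi_1) + \dist_A(\phi_1,\phi_2)$ which follows in the standard way by concatenating paths (and rescaling time, which does not change the integral $\int\|u\|_A\,dt$). Then using the right-invariance already established,
\[
\dist_A(\id,\phi_1\circ\phi_0) \le \dist_A(\id,\phi_0) + \dist_A(\phi_0,\phi_1\circ\phi_0) = \dist_A(\id,\phi_0) + \dist_A(\id,\phi_1),
\]
where the final equality applies right-invariance with $\psi=\phi_0$ to the pair $(\id,\phi_1)$.

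For the inversion identity, I would first observe that $\dist_A$ is symmetric: reversing time in a path, i.e. replacing $\phi_t$ by $\tilde\phi_t := \phi_{1-t}$, gives a path with velocity $\tilde u(t,\cdot) = -u(1-t,\cdot)$, which has the same $A$-length since $\|\cdot\|_A$ is a norm. Combining this with right-invariance by $\psi^{-1}$ gives
\[
\dist_A(\id,\psi) = \dist_A(\psi^{-1},\id) = \dist_A(\id,\psi^{-1}).
\]
No step here is really an obstacle; the only mild subtlety is verifying that the concatenation of two smooth paths can be approximated by (or treated as) an admissible path so that the infimum is genuinely subadditive, but this is standard since one may reparametrize each piece with a smooth cutoff so as to make the junction smooth without changing the integral of $\|u\|_A$.
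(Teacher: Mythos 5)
Your proof is correct and follows essentially the same approach as the paper: right-translate the path, observe the velocity field is unchanged, and deduce the other two claims from right-invariance together with (implicit or explicit) symmetry of $\dist_A$ and the triangle inequality from concatenation. You are slightly more careful than the paper in spelling out symmetry via time reversal and the concatenation/reparametrization point, but the core argument is the same.
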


\begin{proof}
Let $t\mapsto \phi_t\in \Diffc(\M)$ be a curve from $\phi_0$ to $\phi_1$.
Denote $u_t = \pl_t \phi_t \circ \phi_t^{-1}$.
Define $\Phi_t = \phi_t\circ \psi$. This is a curve from $\phi_0\circ \psi$ to $\phi_1\circ \psi$.
We then have
\[
\pl_t \Phi_t = \pl_t \phi_t \circ \psi = \pl_t \phi_t \circ \phi_t^{-1} \circ \Phi_t = u_t \circ \Phi_t,
\]
from which the first claim follows immediately.
The second and third claims follow from the first, since
\[
\dist_A(\id,\psi^{-1}) = \dist_A(\psi\circ \psi^{-1},\psi^{-1}) = \dist_A(\psi,\id),
\]
and
\[
\dist_A(\id,\phi_1 \circ \phi_0) \le \dist_A(\id, \phi_0) + \dist_A(\phi_0,\phi_1 \circ \phi_0) = \dist_A(\id, \phi_0) + \dist_A(\id,\phi_1).
\]
\end{proof}
%%%%%%%%%%%%%%%%%%%%%%%%%%

We are interested in  fractional Sobolev $W^{s,p}$-norms, and in particular in $H^s := W^{s,2}$, for $s\in(0,1)$.  We adopt the following as our basic definition, from among a number
of equivalent formulations.

\begin{definition}
\label{def:fractional_Sobolev}
For $0<s<1$ and $1\le p<\infty$, 
the $W^{s,p}$-norm of a function $f\in L^p(\R^n)$ is given by
\[
\|f\|_{s,p}^p = \| f\|_{L^p}^p + \int_{\R^n}\int_{\R^n} \frac {|f(x)-f(y)|^p}{|x-y|^{n+sp}}\, dx\,dy .
\]
\end{definition}

Given a Riemannian manifold $(\M,\g)$ of bounded geometry, this norm can be extended to $\Gamma_c(T\M)$ using trivialization by normal coordinate patches on $\M$ (see \cite[Section~2.2]{BBM13} for details).
We will denote the induced geodesic distance on $\Diff_c(\M)$ by $\dist_{s,p}$. 
When $p=2$, we will denote $\dist_{s,2}$ by $\dist_s$ for simplicity.
Different choices of charts result in equivalent metrics, and therefore the question of vanishing geodesic distance is independent of these choices.

Instead of using Definition~\ref{def:fractional_Sobolev} directly, we will bound the $W^{s,p}$-norm using an interpolation inequality:

\begin{proposition}[fractional Gagliardo-Nirenberg interpolation inequality]
\label{pn:GN_inequality}
Assume that $1<p<\infty$. For every $f\in W^{1,p}(\R^n)$ and $s\in (0,1)$,
\[
\| f\|_{s,p} \le C_{s,p} \|f\|_{L^p}^{1-s} \|f\|_{1,p}^s\, , \qquad\mbox{ where }\ \ 
\|f\|_{1,p}^p := \|f\|_{L^p}^p+ \|df\|_{L^p}^p.
\]

\end{proposition}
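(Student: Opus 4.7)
The plan is to bound the seminorm part of $\|f\|_{s,p}$, namely
$[f]_{s,p}^p := \iint_{\R^n\times\R^n}\frac{|f(x)-f(y)|^p}{|x-y|^{n+sp}}\,dx\,dy$,
by splitting the domain of integration into a near-diagonal region $\{|x-y|<r\}$ and a far-from-diagonal region $\{|x-y|\ge r\}$, where $r>0$ is a parameter to be optimized at the end. The $L^p$ part is handled trivially by $\|f\|_{L^p}^p = \|f\|_{L^p}^{p(1-s)}\|f\|_{L^p}^{ps} \le \|f\|_{L^p}^{p(1-s)}\|f\|_{1,p}^{ps}$.

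For the far region I would use the crude bound $|f(x)-f(y)|^p \le 2^{p-1}(|f(x)|^p+|f(y)|^p)$ together with Fubini, plus the polar-coordinate computation $\int_{|z|\ge r}|z|^{-n-sp}\,dz = c_n r^{-sp}$, which converges at infinity precisely because $sp>0$. This gives a contribution of order $r^{-sp}\|f\|_{L^p}^p$.

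For the near region I would write $|f(x)-f(y)|\le |x-y|\int_0^1 |df(y+t(x-y))|\,dt$ (valid since $f\in W^{1,p}$), use Jensen's inequality to pull the $p$-th power inside the $t$-integral, substitute $z=x-y$, and apply Fubini: for each fixed $z$ and $t\in[0,1]$ the $y$-integral equals $\|df\|_{L^p}^p$ by translation invariance. This produces $\int_{|z|<r}|z|^{p-n-sp}\,dz \cdot \|df\|_{L^p}^p = c_n' r^{p(1-s)}\|df\|_{L^p}^p$, where the $z$-integral converges at zero precisely because $p(1-s)>0$, i.e., because $s<1$.

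Combining these gives $[f]_{s,p}^p \le C\bigl(r^{-sp}\|f\|_{L^p}^p + r^{p(1-s)}\|df\|_{L^p}^p\bigr)$. Choosing $r$ so as to balance the two terms, namely $r\sim \|f\|_{L^p}/\|df\|_{L^p}$ (assuming $\|df\|_{L^p}>0$; otherwise $f\equiv 0$ since it is compactly supported, and the statement is trivial), yields $[f]_{s,p}^p \le C'\|f\|_{L^p}^{p(1-s)}\|df\|_{L^p}^{ps} \le C'\|f\|_{L^p}^{p(1-s)}\|f\|_{1,p}^{ps}$, and taking $p$-th roots finishes the proof. There is no real obstacle here; the one point to watch is that the restrictions $0<s<1$ in the hypotheses are exactly what makes both $|z|$-integrals finite, and everything else is just Fubini, Jensen, and an optimization in a single scaling parameter.
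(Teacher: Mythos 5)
Your proof is correct, and it takes a more elementary and self-contained route than the paper, which simply cites \cite[Corollary~3.2]{BM01} for this proposition (adding only the remark that for $p=2$ the inequality follows from H\"older's inequality on the Fourier side). Your real-variable argument --- split the Gagliardo seminorm at a radius $r$, bound the far region by $\sim r^{-sp}\|f\|_{L^p}^p$ using $\int_{|z|\ge r}|z|^{-n-sp}\,dz \sim r^{-sp}$ (finite precisely because $s>0$), bound the near region by $\sim r^{p(1-s)}\|df\|_{L^p}^p$ via the fundamental theorem of calculus along segments, Jensen, and Fubini (finite precisely because $s<1$), and then optimize in $r$ --- is a standard and transparent way to obtain exactly the stated inequality, and it makes explicit where each of the restrictions $0<s<1$ enters. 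What the citation to Brezis--Mironescu buys instead is a much broader family of Gagliardo--Nirenberg interpolation inequalities in fractional Sobolev scales, with careful attention to the delicate endpoint cases, none of which is needed here. One small correction to your side remark: the hypothesis is $f\in W^{1,p}(\R^n)$, not that $f$ has compact support; if $\|df\|_{L^p}=0$ then $f$ is a.e.\ constant, and the conclusion $f\equiv 0$ follows because the only constant in $L^p(\R^n)$ is zero.
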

For a proof, see  for example \cite[Corollary~3.2]{BM01}.  In fact this is the only property of the $W^{s,p}$-norm that we will use. We remark that when $p=2$, the above inequality
(with $C=1$) follows immediately from H\"older's inequality, if one uses the equivalent norm $\| f\|_{s,2}^2 = \int_{\R^n}(1+|\xi|^2)^{s/2}|\hat f(\xi)|^2 d\xi$, where $\hat f$ denotes
the Fourier transform.
%%%%%%%%%%%%%%%%%

The main result of this paper is the following.

\begin{theorem}\label{main_thm}
Let $(\M,\g)$ be an $n$-dimensional Riemannian manifold of bounded geometry.
\begin{enumerate}
\item
If $p\in [1,\infty)$ and $s< \min\{ 1, n/p\}$, then  $\dist_{s,p}(\phi_0,\phi_1)= 0$ whenever $\phi_0,\phi_1$
belong to the  same path-connected component of $\Diffc(\M)$.
\item
If $s\ge 1$ or $sp>n$ then  $\dist_{s,p}(\phi_0,\phi_1)>0$ for any two distinct $\phi_0,\phi_1\in \Diffc(\M)$.
\end{enumerate}
\end{theorem}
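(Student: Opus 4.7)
The plan is to handle the two parts of the theorem separately. Part (2), the positivity claim, already follows from the arguments mentioned in the discussion before the theorem: when $s\ge 1$, short paths force enough volume compression to be detected by any $W^{s,p}$ norm that controls $W^{1,p}$ (Moser--Michor), and when $sp>n$ the Sobolev embedding $W^{s,p}\hookrightarrow L^\infty$ directly detects any nontrivial motion (BBHM). I would treat these as essentially known and focus the real work on part (1).

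For part (1), the first step is two standard reductions using Lemma~\ref{lm:right_invariance}: right-invariance reduces the claim to showing $\dist_{s,p}(\id,\psi)=0$ for every $\psi$ in the identity component; then a fragmentation argument, decomposing $\psi$ as a product of diffeomorphisms each supported in a small coordinate ball and applying the triangle inequality, reduces further to the case where $\psi$ is supported in a single Euclidean ball. From here everything is local Euclidean analysis.

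The core computation comes from Proposition~\ref{pn:GN_inequality}: for a vector field $u$ supported in a ball of radius $r\le 1$ with $|u|\lesssim V$ and $|du|\lesssim V/r$, the interpolation inequality gives
\[
\|u\|_{s,p}\lesssim \|u\|_{L^p}^{1-s}\|u\|_{1,p}^s \lesssim (r^{n/p}V)^{1-s}(r^{n/p-1}V)^s = r^{n/p-s}\,V.
\]
From this I extract the two engines of the proof. \emph{Transporting} a radius-$r$ bump over a distance $D$ at speed $V$ takes time $D/V$ and costs $\sim r^{n/p-s}D$, which vanishes as $r\to 0$ whenever $sp<n$, \emph{independently of how large $D$ is}. \emph{Contracting} a radius-$r$ ball to radius $\lambda r$ via $u(x)=-x\,\chi(|x|/r)$ takes time $|\log\lambda|$ and, applying the same estimate with $V\sim r$, costs $\sim r^{1+n/p-s}|\log\lambda|$, which is small as $r\to 0$ provided $s<1$ (the logarithm being harmless). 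These are the quantitative forms of ``compression is cheap for $s<1$'' and ``transport is cheap for $sp<n$''.

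The constructive step is then to approach $\psi$ by a sequence of ``compress--transport--expand'' cycles: shrink a piece of the support to microscopic scale, translate it along a path toward its $\psi$-image, and expand it back. Each individual cycle has cost controllable by the above estimates, and taking $r$ small makes the total cost arbitrarily small. The main obstacle, and the reason the paper is nontrivial, is that the transport step does not land exactly at the desired target, so residual errors threaten to accumulate faster than we gain from taking $r\to 0$. Following the ideas of \cite{BBHM13}, I would first build an \emph{exact} flow reaching $\psi$, ignoring the regularity requirement, and then mollify it at a scale chosen so that (i) the $W^{s,p}$ regularization error is controlled, and (ii) the regularized flow still performs the intended compressions and transports. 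Balancing these competing errors, and iterating the construction so that the residual between the constructed endpoint and $\psi$ shrinks geometrically, is the technical heart of the argument and is where I expect to spend the bulk of the effort.
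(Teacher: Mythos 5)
Your high-level picture is in the right spirit: exploiting the interpolation inequality to make compression cheap when $s<1$ and small-support transport cheap when $sp<n$, then assembling an approximate flow and mollifying, is indeed the skeleton of the paper's argument (including the debt to \cite{BBHM13}). However there are a few genuine gaps between this sketch and a proof.

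First, the reduction. You propose fragmentation, leaving the task of showing $\dist_{s,p}(\id,\psi)=0$ for \emph{every} $\psi$ supported in a coordinate ball. The paper instead uses the fact (Proposition~\ref{pn:normal_subgroup}) that $\{\phi:\dist_{s,p}(\id,\phi)=0\}$ is a normal subgroup of the simple group $\Diff_0(\M)$, so it suffices to exhibit a \emph{single} nontrivial $\Phi$ with zero distance; the paper then works with a shear $\Phi(x,y)=(x+\zeta(x,y),y)$. Your route is not wrong, but it leaves you with a strictly harder endgame, since you would still have to treat arbitrary ball-supported diffeomorphisms, not just shears.

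Second, and more substantively, the ``transport a radius-$r$ bump over distance $D$'' estimate is the correct cost for \emph{rigid translation}, but a general diffeomorphism displaces different points by different amounts, so rigid translation of compressed pieces cannot by itself produce the target. The paper handles this by first splitting $\Phi$ into $\approx k$ horizontal strips, squeezing those strips in the $y$-direction (this is where $s<1$ enters), and then flowing along the squeezed strips with the ``staircase'' vector field $u$ of \eqref{eq:def_u}, whose time-$t$ support is a union of $\approx k$ boxes of side $\approx\lambda$. This flow realizes the nonuniform displacement exactly (before regularization); its small support per time slice is what makes the $W^{s,p}$-cost small for $sp<n$. Your proposal does not contain a mechanism for nonuniform transport, which is the crux.

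Third, the error correction. You suggest iterating the construction ``so that the residual shrinks geometrically.'' The paper does something more direct and more delicate: after mollifying $u$ to $u_\delta$, the endpoint $\Theta$ misses the intended target $\tTheta=\Psi\circ\Phi_1\circ\Psi^{-1}$ by an amount $O(\delta/\lambda)$ in $L^\infty$, and the correction is done by a \emph{single} affine homotopy $\Gamma_t$. For this homotopy to be cheap one needs not just smallness of the displacement $\xi$ but also uniform bounds on $\pl_x\theta$ and $|\pl_y\theta|\lesssim\lambda^{-1}$ (Lemmas~\ref{lem:theta_x_bounds} and \ref{lem:theta_y_bounds}); these derivative bounds on the mollified flow are the technical heart of the paper and would not come for free in an iteration scheme. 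It is not clear that geometric iteration, as you describe it, would converge, because after one cycle the residual lives on the same strips rather than on smaller balls, so the gain per iteration is not obviously geometric.
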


The second assertion is a direct consequence of known arguments in the case $p=2$.
So is the first one for the case $n=1$.
The new point is the vanishing of geodesic distance for all $ s <\min \{1, n/p\}$
whenever $n\ge 2$. 

Note that Proposition~\ref{pn:GN_inequality}, which is used extensively in the proof of the first part of Theorem~\ref{main_thm}, does not hold for $p=1$. However, Theorem~\ref{main_thm} does hold in this case as well; as explained in more detailed in Section~\ref{sec:Wsp}, our proof for vanishing $W^{s,p}$-distance for $p$ close enough to $1$ implies vanishing $W^{s,1}$-distance.

In the remainder of this section we quickly verify that known results about the case $p=2$ extend to the more general setting we consider here, and we present the
reduction, also well-known in the $H^s$ case, that will allow us to complete the proof of the theorem by 
showing that $\dist_{s,p}(\id,\Phi)=0$ for a single compactly supported diffeomorphism on $\R^n$.

\paragraph{Positive geodesic distance}
First, assume that  $\phi_0,\phi_1$ are two distinct elements of $\Diffc(\M)$, and let $u$ be any time-dependent vector field generating a path
$\phi:[0,1]\to \Diffc(\M)$  connecting $\phi_0$ to $\phi_1$, via the ODE
$\partial_t\phi_t = u(t,\phi_t), 0<t <1$.
The proof of \cite[Theorem~5.7]{MM05} uses a clever integration by parts to show that for any $\rho, \zeta\in C^1_c(\M)$, 
\[
\left|
\int_\M \rho (\zeta\circ \psi_1 - \zeta) \mbox{vol}(g)
\right| = \left|\int_0^1 \int_\M (\zeta\circ \psi_t) \mbox{div}(\rho u_t) \mbox{vol}(g)\,dt\right|, \qquad
\psi_t := \phi_0\circ \phi_t^{-1}.
\]
By a suitable choice of $\rho, \zeta$, this implies that  $0<c\le C\int_0^1 \|u(t)\|_{1,p} dt$ for $p\ge 1$, where the constants depend on $\phi_1,\phi_2,\rho, \zeta, p$. This shows the positivity of the geodesic distance in $W^{1,p}$ for any $p\ge 1$, and hence
(since these spaces embed into $W^{1,p}$) in $W^{s,p}$ for $s\ge 1$.

On the other hand, if $s>n/p$, then $W^{s,p}$ embeds into some $C^{0,\alpha}$ (see for example \cite[Theorem~8.2]{DPV12}) and hence into $L^\infty$. Thus 
$\| \partial_t\phi_t\|_{L^\infty} = \| u(t) \|_{L^\infty} \le C \|u(t)\|_{s,p}$, and as
noted in \cite[Theorem~4.1]{BBHM13}, the positivity of $\dist_{s,p}$ follows directly:
\[
|\phi_1(x)-\phi_0(x)|  =
 \left|\int_0^1 \partial_t\phi_t(x)\, dt\right|
\le
C\int_0^1 \| u(t)\|_{s,p} dt \qquad\mbox{ for every }x\in \M.
\]
Note that it also follows that the geodesic distance is positive for $L^\infty=W^{0,\infty}$.

For $sp<n=1$,
the proof of vanishing geodesic distance in \cite{BBHM13} in the case $p=2$ relies on an explicit construction (incorporated into \eqref{eq:ttheta_def} below) of a transportation scheme of the identity to a single diffeomorphism, that has arbitrarily small cost;
this arbitrarily small cost follows from the fact that the $W^{s,p}$-norm of
the characteristic function of an interval tends to zero with the length of the interval.
For general $sp<n=1$, this is well-known and can easily be verified  from 
Definition \ref{def:fractional_Sobolev}. Once this is noted, the proof goes through
with no change.

\paragraph{Reduction to a single diffeomorphism}
The following proposition states an important property of $(\Diffc(\M),\dist_{s,p})$ --- it is either a metric space, or it collapses completely, that is, the geodesic distance in any connected component of $\Diffc(\M)$ vanishes.
In other words, if $(\Diffc(\M),\dist_{s,p})$ is not a metric space, then any two diffeomorphisms in the same connected component can be connected by a path of arbitrary short $W^{s,p}$-length.
\begin{proposition}
\label{pn:normal_subgroup}
Denote by $\Diff_0(\M)$ the connected component of the identity (all diffeomorphisms in $\Diffc(\M)$ for which there exists a curve between them and $\id$). 
\begin{enumerate}
\item $\Diff_0(\M)$ is a simple group.
\item $\BRK{\phi : \dist_{s,p}(\id,\phi) = 0}$ is a normal subgroup of $\Diff_0(\M)$.
	Therefore, it is either $\BRK{\id}$ or the whole $\Diff_0(\M)$.
\end{enumerate}
\end{proposition}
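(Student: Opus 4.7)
The plan is as follows. Part (1) is the classical theorem of Thurston and Mather that the identity component of $\Diffc(\M)$ is a simple group; I would simply cite it. For part (2), set $N := \{\phi \in \Diffc(\M) : \dist_{s,p}(\id,\phi) = 0\}$. Since any diffeomorphism at finite $\dist_{s,p}$-distance from $\id$ is, by definition of the geodesic distance, joined to $\id$ by a smooth path, we have $N \subseteq \Diff_0(\M)$. The subgroup axioms for $N$ fall straight out of Lemma~\ref{lm:right_invariance}: $\id \in N$ trivially; $N$ is closed under inverse because $\dist_{s,p}(\id,\psi^{-1}) = \dist_{s,p}(\id,\psi)$; and closed under composition by the triangle-type inequality $\dist_{s,p}(\id, \phi_1 \circ \phi_0) \le \dist_{s,p}(\id,\phi_0) + \dist_{s,p}(\id,\phi_1)$.

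For normality, I would proceed as follows. Fix $\psi \in \Diff_0(\M)$ and $\phi \in N$, and given $\e > 0$, take a path $\{\phi_t\}_{t\in[0,1]}$ from $\id$ to $\phi$ of $W^{s,p}$-length less than $\e$, generated by $u_t := \partial_t \phi_t \circ \phi_t^{-1}$. The conjugated curve $\Phi_t := \psi \circ \phi_t \circ \psi^{-1}$ runs from $\id$ to $\psi \circ \phi \circ \psi^{-1}$, and a direct calculation shows that it is generated by the pushforward velocity $U_t = \psi_\ast u_t = D\psi(\psi^{-1}(\cdot))\,u_t(\psi^{-1}(\cdot))$. I then need an estimate of the form $\|\psi_\ast u\|_{s,p} \le C_\psi \|u\|_{s,p}$, with $C_\psi$ depending only on $\psi$; combined with $\e \to 0$, this yields $\psi \circ \phi \circ \psi^{-1} \in N$, so $N$ is a normal subgroup of $\Diff_0(\M)$. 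Invoking (1) then forces $N = \{\id\}$ or $N = \Diff_0(\M)$.

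The main technical step is thus the pushforward estimate, which is the step I expect to require some care. For $p \in (1,\infty)$ and $s \in (0,1)$, I would derive it from Proposition~\ref{pn:GN_inequality}: change of variables together with the chain rule and the bounded-geometry hypothesis on $\M$ provides uniform bounds
\[
\|\psi_\ast u\|_{L^p} \le C_\psi \|u\|_{L^p}, \qquad \|\psi_\ast u\|_{1,p} \le C_\psi \|u\|_{1,p},
\]
and the Gagliardo--Nirenberg interpolation inequality then gives the desired $W^{s,p}$ estimate. A minor bookkeeping point is that the $W^{s,p}$-norm on $\Gamma_c(T\M)$ is defined by trivialization through normal coordinate patches, so one must check that pushforward by $\psi$ is compatible with this construction up to equivalent norms; bounded geometry plus compact support of $\psi$ makes this routine. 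The edge case $p=1$ can be treated by the approximation argument discussed in Section~\ref{sec:Wsp}, since vanishing of $W^{s,p}$-distance for $p$ slightly above $1$ implies vanishing in $W^{s,1}$.
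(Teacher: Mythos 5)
Your overall strategy matches the paper's: you reduce normality to a pushforward estimate $\|\psi_\ast u\|_{s,p}\le C_\psi\|u\|_{s,p}$ on the generating vector fields and then invoke simplicity of $\Diff_0(\M)$. The subgroup verification via Lemma~\ref{lm:right_invariance} is correct, and the reduction to the $p>1$ case at the end is fine.

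However, the way you justify the pushforward estimate has a genuine gap. The fractional Gagliardo--Nirenberg inequality of Proposition~\ref{pn:GN_inequality} goes only one way: it bounds $\|f\|_{s,p}$ \emph{above} by $\|f\|_{L^p}^{1-s}\|f\|_{1,p}^s$. Combining it with the $L^p$- and $W^{1,p}$-boundedness of pushforward only gives
\[
\|\psi_\ast u\|_{s,p}\ \lesssim\ \|\psi_\ast u\|_{L^p}^{1-s}\,\|\psi_\ast u\|_{1,p}^{\,s}\ \lesssim_\psi\ \|u\|_{L^p}^{1-s}\,\|u\|_{1,p}^{\,s},
\]
and the right-hand side is \emph{not} controlled by $\|u\|_{s,p}$: there is no reverse Gagliardo--Nirenberg inequality, and indeed the whole point of the vanishing-distance phenomenon is that one can make $\int\|u_t\|_{s,p}\,dt$ small while $\|u_t\|_{1,p}$ blows up. So the scalar interpolation inequality does not upgrade operator bounds on the endpoints $L^p$ and $W^{1,p}$ to an operator bound on $W^{s,p}$. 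What one actually needs is a result about the $W^{s,p}$-boundedness of the operations $u\mapsto u\circ\psi$ and $u\mapsto h\cdot u$ themselves --- e.g.\ the theorems of Triebel that the paper cites (Theorems~4.2.2 and 4.3.2 in \cite{Tri92}), or an honest operator-interpolation argument identifying $W^{s,p}$ as an interpolation space between $L^p$ and $W^{1,p}$. Either would close the gap; Proposition~\ref{pn:GN_inequality} alone does not.
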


This is proved in  \cite[p.~15]{BBHM13} (see also \cite[Lemma~7.10]{BBM14})
when $p=2$, and the proof goes through with essentially no change in
our setting. We recall the idea. The first conclusion is classical (and is independent of the norm). To establish the second,
we consider $\phi, \psi\in \Diffc(\M)$ such that $\dist_{s,p}(\id, \phi)= 0$, and we
must show that $\dist_{s,p}(\id,   \Phi)=0$ for $\Phi :=\psi^{-1}\circ \phi \circ \psi$. 
To do this,  note that if $\phi_t$, $0\le t\le 1$ is a path connecting $\id$ to $\phi$, then
$\Phi_t := \psi^{-1}\circ \phi_t\circ\psi$ connects $\id$ to $\Phi$. The conclusion thus follows
by verifying that $\int_0^1 \| \partial_t\Phi_t\circ \Phi^{-1}\|_{s,p}dt\le C\int_0^1 \| \partial_t\phi_t\circ \phi^{-1}\|_{s,p} \, dt $, where $C$ may depend on $\psi, (\M,\g), s,p$ but not $\phi$. In fact a pointwise inequality of the integrands holds for every $t$. This follows after a computation from the fact that for $h\in C^\infty(M)$ and $\psi\in \Diffc(\M)$, the operations of pointwise multiplication $u\mapsto h \cdot u$ and
composition $u\mapsto u\circ\psi$ are bounded linear operators on $W^{s.p}(\M)$,
see Theorems~4.2.2 and 4.3.2 in \cite{Tri92}.

\paragraph{The strategy for proving vanishing geodesic distance} 
The proof of part 1 of Theorem~\ref{main_thm} for $n\ge 2$ goes as follows:
\begin{enumerate}
\item For $sp<n$ and $n\ge 2$, we will show that there exists at least one
nontrivial $\Phi\in \Diffc(\R^n)$ such that $\dist_{s,p}(\id,\Phi) = 0$.
\item For general $(\M,\g)$ of bounded geometry, 
we can push-forward this example in $\R^n$
to obtain a diffeomorphism $\widetilde \Phi$, supported in a single coordinate
chart used in the definition of induced $W^{s,p}$ geodesic distance.
Then the definitions imply that $\dist_{s,p}(\id, \widetilde \Phi) = 0$.
(see \cite{BBM13} for a similar argument).
\item Part 1 of Theorem~\ref{main_thm} then follows from Proposition~\ref{pn:normal_subgroup}. 
\end{enumerate}

In the rest of the paper we treat the first point.
For simplicity, we first consider the special case $p=2, \M = \R^2$, and
we show that  $\dist_{s}(\id, \Phi) := \dist_{s,2}(\id, \Phi) = 0$ for a particular $\Phi\in \Diffc(\R^2) $.
This construction, carried out in Section \ref{sec:2dc}, contains all the ingredients of more general cases. In Section \ref{sec:HD} we present a much simpler construction that works
when $p=2, s<1$ and $n\ge 3$. 
Finally, in Section \ref{sec:Wsp} we show how
to modify these arguments to complete the proof of the theorem in the general case.

%%%%%%%%%%%%%%%%%%%%%%%%%%%%%%%%%%%%%%%%%%%%%%%%%%%%%%%%%%%%%%%%%%%%%%%%%%%%%%%%%%%%%%%%%%
\section{Two-dimensional construction}\label{sec:2dc}
In this section we prove the following:
\begin{theorem}
\label{thm:main_2D}
Let $\zeta \in C_c^\infty((0,1)^2)$ satisfying $\zeta \ge 0$, $\pl_1\zeta > -1$. 
Denote $\phi(x,y) = x + \zeta(x,y)$,
and define $\Phi\in \Diffc(\R^2)$ by $\Phi(x,y) = (\phi(x,y),y)$.
Then $\dist_s(\Phi,\id) = 0$ for every $s\in[0,1)$.
\end{theorem}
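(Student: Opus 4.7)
The plan is to construct, for each $\delta > 0$, a smooth path $\{\phi_t\}_{t \in [0,1]} \subset \Diffc(\R^2)$ from $\id$ to $\Phi$ of $W^{s,2}$-length less than $\delta$. Following the heuristic in the introduction, I would alternate \emph{compression} (cheap in $W^{s,2}$ for $s<1$) with \emph{transport through narrow channels} (cheap when $sp < n = 2$). The flow is built as a concatenation of many short sub-flows, each of which moves material that has been squeezed into a thin region of $\R^2$; the generating vector field at each sub-stage is supported on that thin region, so its $L^2$-norm is small, while its $W^{1,2}$-norm grows in a controlled way with the inverse thickness. Feeding these estimates into the Gagliardo--Nirenberg interpolation of Proposition~\ref{pn:GN_inequality} yields a small $W^{s,2}$-cost per sub-stage for every $s<1$, and Lemma~\ref{lm:right_invariance} lets us add up the contributions from the different stages.

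Concretely, I would first construct a (possibly rough) flow that \emph{exactly} reaches $\Phi$. For a small parameter $\e>0$, the construction sweeps through $\supp\zeta \subset (0,1)^2$ in many stages; at each stage, a portion of $\zeta$ is implemented via a vector field supported in a thin two-dimensional channel. Each stage is modelled on the one-dimensional ``conveyor-belt'' construction of \cite{BBHM13} referenced in \eqref{eq:ttheta_def}: material is fed in at one end of a compressed channel, pushed across, and deposited at the other end. The two-dimensional geometry of $\R^2$ allows the channel to be thin in one direction and long in the other; the thinness parameter $\e$ makes the $L^2$-norm of the vector field small, while the aspect ratio can be arranged so that the interpolated $W^{s,2}$-norm is a positive power of $\e$ for every $s<1$, not only $s<1/2$. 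The hypothesis $\pl_1\zeta > -1$ guarantees that the linear interpolation $(x,y)\mapsto(x + t\zeta(x,y), y)$ is a genuine path of diffeomorphisms and so serves as a ``skeleton'' around which to organize the compressed flow, while $\zeta\ge 0$ ensures that transport is always in one direction, simplifying the bookkeeping.

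The vector field so constructed may fail to be smooth at the junctions between stages, or to lie pointwise in $W^{s,2}$ in time, so I would regularize by mollifying in space and time; the endpoint of the regularized flow is close to, but not exactly, $\Phi$, so I would append a final short corrective flow supported near the discrepancy set and of cost $o(1)$ as $\e\to 0$. The main technical obstacle, as signalled in the introduction, is precisely this regularization and its error-control: the regularized flow must land close enough to $\Phi$ that the corrective flow is genuinely cheap, \emph{and} it must preserve the $L^2$-versus-$W^{1,2}$ balance on which the Gagliardo--Nirenberg interpolation depends. Controlling the $L^2$-norm under mollification is straightforward, but the $W^{1,2}$ contributions coming from temporal mollification and from the spatial cutoffs at stage boundaries require delicate estimates, which I expect to form the bulk of the technical work.
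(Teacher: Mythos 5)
Your high-level plan is directionally correct --- the paper also alternates compression and transport, uses the Gagliardo--Nirenberg interpolation, mollifies an exact flow and corrects the resulting error with a final cheap homotopy --- but as written the transport estimate you invoke would fail precisely in the range $s\in[1/2,1)$ that is the new content of the theorem. You describe the flow at each sub-stage as generated by a vector field ``supported on that thin region'' which is ``thin in one direction and long in the other,'' and you expect that arranging the aspect ratio gives a positive power of $\e$ in $W^{s,2}$ for every $s<1$. That is not so: for a vector field of order one supported on a region of dimensions $1\times\e$ and varying on scale $\e$ in the thin direction, one gets $\|u\|_{L^2}\sim\e^{1/2}$ and $\|u\|_{H^1}\sim\e^{-1/2}$, so interpolation gives $\|u\|_{H^s}\sim\e^{1/2-s}$, small only for $s<1/2$ --- exactly the regime already handled by \cite{BBHM13}. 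The crucial feature of the paper's construction, which your sketch does not isolate, is that at each fixed time $t$ the active support of the velocity field is small in \emph{both} coordinates: it consists of $\approx k$ squares of side $\approx\lambda$ (see Figure~\ref{fig:supp_u}), not long thin strips. This is what the explicit time-parametrised flow \eqref{eq:ttheta_def}--\eqref{eq:def_u} achieves, and it is what yields the estimate $\|u_{\delta,t}\|_{H^s}^2\lesssim k\lambda^{2-s}\delta^{-s}$, which can be made to vanish for every $s<1$ with the right parameter choices. Your appeal to the conveyor-belt picture may implicitly carry this structure, but the estimate you actually state does not, and that is the essential gap.

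A second omission concerns the compression step, which also has to be arranged to be cheap. The paper decomposes $\Phi=\Phi_2\circ\Phi_1$ so that each factor has $y$-support contained in a union of $\approx k$ interleaved horizontal strips of width $\approx k^{-1}$ (Section~\ref{sec:Step_I_2D}). Only then can the squeezing diffeomorphism $\Psi$, compressing each strip about its midline by $e^{-\alpha}$, be implemented at cost $\lesssim\alpha k^{-(1-s)}=o(1)$: no point moves more than $O(k^{-1})$. Squeezing $\supp\zeta$ wholesale into a thin region would move points an $O(1)$ distance and the cost would not vanish. Beyond this, the parameters $\alpha$, $\lambda=e^{-\alpha}/k$ and $\delta$ must be tuned against one another (see \eqref{eq:bounds_alpha} and \eqref{eq:bounds_delta}), and the bounds $|\theta-(x+\tzeta_1)|\lesssim\delta/\lambda$ and the $C^1$-estimates on $\theta$ from Lemmas~\ref{lem:Theta_error}--\ref{lem:theta_y_bounds} are precisely what make the final affine error-correction of Section~\ref{sec:Step_IV_2D} cheap. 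Your proposal correctly anticipates that this regularization and error control constitutes the technical core, but it neither sets up the quantities to be estimated nor indicates how the competing constraints can be met.
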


We start with a general outline and heuristics of the proof.
Fix $k\in \N$.
In Section~\ref{sec:Step_I_2D} we decompose $\Phi$ as follows:
\[
\Phi = \Phi_2\circ \Phi_1, \qquad \Phi_i = (\phi_i(x,y),y) = (x+\zeta_i(x,y),y) \in \Diffc(\R^2),
\]
where $\zeta_i$ is supported on the union of $\approx k$ strips $(0,1)\times I_j$, $|I_j| \approx k^{-1}$.
In Sections~\ref{sec:Step_II_2D}--\ref{sec:Step_IV_2D}, we show that $\dist_s(\Phi_1,\id) = o(1)$, when $k\to \infty$; the proof for $\Phi_2$ is analogous, and since $k$ is arbitrary, the conclusion $\dist_s(\Phi,\id) = 0$ follows by Lemma~\ref{lm:right_invariance}.

In order to prove $\dist_s(\Phi_1,\id) = o(1)$, we decompose $\Phi_1$ as follows: 
\[
\Phi_1 = \Gamma^{-1} \circ {\Psi}^{-1} \circ \Theta \circ \Psi, \qquad \Gamma,\Theta,\Psi \in \Diffc(\R^2),
\]
where 
\begin{enumerate}
\item $\Psi(x,y) = (x,\psi(x,y))$ squeezes the intervals $I_j$ into intervals of length $\approx \lambda$ for $\lambda$ of the form $\lambda = e^{-\alpha} k^{-1}$,
where $\alpha = \alpha(k)$ is a (moderately large) parameter, to be determined.
	In Section~\ref{sec:Step_II_2D} we define $\Psi$ and show that $\dist_s(\Psi,\id) \lesssim \alpha k^{-(1-s)}$.
	
	This stage compresses the support of $\Phi_1$ into small
	sets that can then, in the next stage, be transported large distances at low cost, owing to the subcriticality of $H^s(\R^2)$ for $s<1$. This concentration can be
	achieved at low cost (for $s<1$) because no point is moved very far. This requires
	the striped nature of the support of $\Phi_1$, and it is the reason for
	the decomposition $\Phi=\Phi_2\circ \Phi_1$.

\item $\Theta(x,y) = (\theta(x,y),y)$ maps $x$ almost to its right place, that is $\theta(x,\psi(x,y)) - \phi_1(x,y) \ll 1$.
	$\Theta$ is defined (as the endpoint of a given flow) via a construction similar to the construction (for $s<1/2$) in \cite{BBHM13,BBM13}; in order for it to work for $s\in [1/2,1)$, we need to regularize the flow (and therefore $\theta(x,\psi(x,y)) \ne \phi_1(x,y)$).
	We define $\Theta$ in Section~\ref{sec:Step_III_2D}, show that $\dist^2_s(\Theta,\id) \lesssim k\lambda^{2-s}\delta^{-s}$, where $\delta\ll \lambda$ is a regularization parameter to be determined.
	 The main part of this section consists of proving bounds on $\theta(x,\psi(x,y)) - \phi_1(x,y)$ and on the derivatives of $\theta$.
	 
	 The key idea in the construction of the flow is that at every given time its support is very small in both $x$ and $y$; the subcriticality of $H^s(\R^2)$ then implies that its $H^s$-norm at any given time is small.
	 For $H^s(\R^n)$, $n>2$ (and more generally, for $W^{s,p}(\R^n)$, $n>sp+1$), the squeezing in the $(n-1)$ $y$-directions done in the previous step is enough to guarantee a small $H^s$-norm of flows in the $x$ direction, that do not have small support in the $x$ direction (i.e.,~that the projection of the support on the $x$-axis is not small).
	 This is why in this case there in a much simpler construction in which the subtleties of this stage can be avoided.
	 
\item In Section~\ref{sec:Step_IV_2D} we show that the error $\Gamma = {\Psi}^{-1} \circ \Theta \circ \Psi \circ \Phi_1^{-1}$ satisfies $\dist_s(\Gamma,\id) \lesssim k^s\delta^{1-s}\lambda^{-(1-s)}$, by showing that the affine homotopy between $\id$ and $\Gamma$ is a path of small $H^s$-distance.
	This uses the bounds on $\theta$ from Section~\ref{sec:Step_III_2D}.
\end{enumerate}
Finally, we show that $\alpha$ and $\delta$ can be chosen such that, as $k\to \infty$,
\[
\dist_s(\Psi,\id) = o(1), \qquad \dist_s(\Theta,\id) = o(1), \quad \text{and} \quad \dist_s(\Gamma,\id) = o(1),
\]
and then $\dist_s(\Phi_1,\id) = o(1)$ follows from Lemma~\ref{lm:right_invariance}.

A short video presenting the main stages of the construction can be found in the following link: \href{http://www.math.toronto.edu/rjerrard/geo_dist_diffeo/vanishing.html}{www.math.toronto.edu{/}rjerrard{/}geo\_dist\_diffeo{/}vanishing.html}. 
The flow in the video involves no regularization in the construction of $\Theta$ (as it would not be visible in this resolution), and therefore the error-correction term $\Gamma$ is not needed, and $\Theta = \Psi \circ \Phi_1 \circ \Psi^{-1}$.
The video contains the following stages:
\begin{enumerate}
\item Compression of several disjoint intervals in the vertical direction (a path from $\id$ to $\Psi$). 
\item A flow in the horizontal direction, from $\Psi$ to $\Theta \circ \Psi = \Psi\circ \Phi_1$. Note that at any given time the flow is supported on a union of very small rectangles.
\item Undoing the squeezing stage, that is flowing from $\Psi \circ \Phi_1$ to $\Phi_1$.
\item Repeating steps 1--3 for $\Phi_2$, resulting in $\Phi_2\circ \Phi_1 = \Phi$.
\end{enumerate}

\begin{remark}
Throughout this paper, we use big $O$ and small $o$ notations with respect to the limit $k\to \infty$.
We will also use notations such as $|I_j| \approx k^{-1}$ above, meaning that there exist $c_2\ge c_1>0$ such that $c_1k^{-1} \le |I_j| \le c_2 k^{-1}$.
Finally, $a \lesssim b$, means $a \le Cb$ for some constant $C$ (that can depend on the dimension $n$ and the Sobolev exponent $s$).
\end{remark}

%%%%%%%%%%%%%%%%%%%%%%%%%%%%%%
\subsection{Step I: Splitting into strips}
\label{sec:Step_I_2D}
Fix $k \in \N$.
Define the following subintervals of $(0,1)$:
\[
S_1^i := \Brk{\frac{8i-3}{k}, \frac{8i + 3}{k}}, \qquad 
L_1^i := \Brk{\frac{8i-2}{k}, \frac{8i + 2}{k}}, \qquad 
i\in \Z, 
\]
\[
S_2^i := \Brk{\frac{8i+1}{k}, \frac{8i + 7}{k}}, \qquad 
L_2^i := \Brk{\frac{8i+2}{k}, \frac{8i + 6}{k}}, \qquad 
i\in \Z, 
\]
and denote $S_j = \cup_i S_j^i\cap [0,1]$, $L_j = \cup_i L_j^i\cap[0,1]$.
Let $\chi:[-4,4]\to [0,1]$ be a smooth function satisfying $\supp \chi\subset (-3,3)$ and $\chi|_{[-2,2]} \equiv 1$.
Extend $\chi$ periodically, and define $\chi_k(y) = \chi(ky)$ on $(0,1)$.
Note that $\supp \chi_k \subset S_1$, $\chi_k|_{L_1} \equiv 1$, and $|\chi_k'| \lesssim k$.
See Figure~\ref{fig:Step_I}.

\begin{figure}
\begin{center}
\includegraphics[height=1.5in]{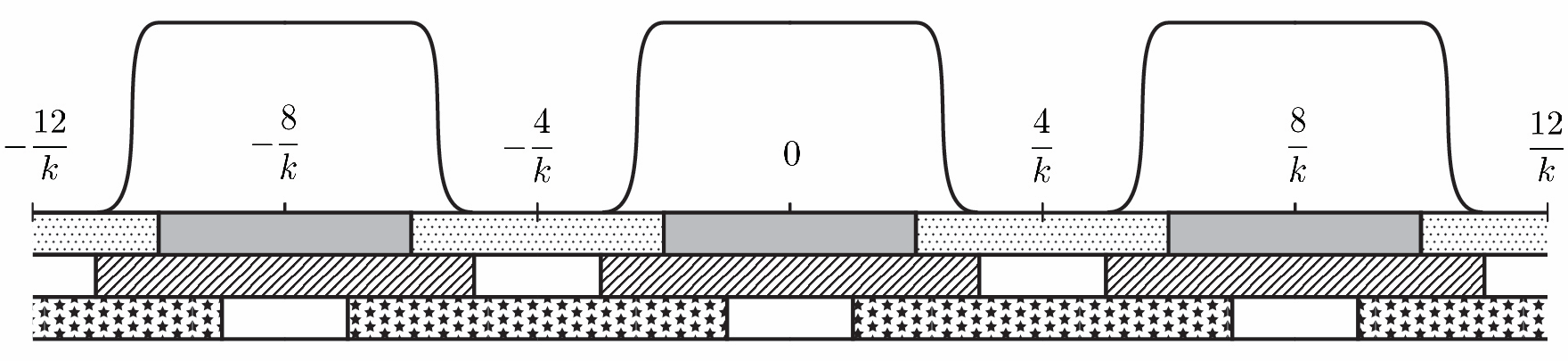} 
\end{center}
\caption{A sketch of $\chi_k$. The solid grey part of the top strip below the axis denotes $L_1$, where $\chi_k\equiv 1$; the dotted part of this strip denotes $L_2$.
The marked part of the middle strip denotes $S_1$, which contains $\supp(\chi_k)$, and hence $\supp(\zeta_1(x,\cdot))$.
The marked part of the bottom strip denotes $S_2$, which contains $\supp(\zeta_2(x,\cdot))$. }
\label{fig:Step_I}
\end{figure}

Define $\zeta_1(x,y) = \zeta(x,y) \chi_k(y)$. Note that
\beq
\label{eq:zeta_1_bounds_0}
\zeta_1|_{(0,1)\times L_1} = \zeta,
\eeq
\beq
\label{eq:zeta_1_bounds_1}
\supp(\zeta_1) \subset (0,1)\times S_1, 
\eeq
and
\beq
\label{eq:zeta_1_bounds_2}
0\le \zeta_1\le C,\quad
-1 + C^{-1}< \pl_x \zeta_1 < C, \quad
|\pl_y \zeta_1| < Ck,
\eeq
where $C$ is independent of $k$. The bounds \eqref{eq:zeta_1_bounds_2} follow from the bounds $0\le \zeta\le C$, $|d\zeta|<C$, $\pl_x \zeta > -1 + C^{-1}$ and $|\chi_k'| < Ck$.
Define 
\[
\Phi_1 = (\phi_1(x,y),y) = (x+\zeta_1(x,y),y), \qquad \Phi_2 = \Phi\circ \Phi_1^{-1} = (\phi_2(x,y),y).
\]
From \eqref{eq:zeta_1_bounds_0}--\eqref{eq:zeta_1_bounds_2}, it follows that
we can write $\phi_2(x,y) =  x+\zeta_2(x,y)$, with $\zeta_2$  satisfying the
bounds \eqref{eq:zeta_1_bounds_2}, and  property \eqref{eq:zeta_1_bounds_1} with $S_2$ in place of $S_1$.
Indeed, if $(x,y)\in (0,1)^2\setminus (0,1)\times S_2$, then $y\in L_1$, and hence, from \eqref{eq:zeta_1_bounds_0} it follows that $\phi_2(x,y) = x$, and therefore \eqref{eq:zeta_1_bounds_1} holds for $\zeta_2$ (with $S_1$ replaced by $S_2$).
Since $\zeta_1\le \zeta$ and $\zeta_2(\phi_1(x,y),y) = \zeta(x,y) - \zeta_1(x,y)$, it follows that $0\le \zeta_2 \le C$.
Finally, \eqref{eq:zeta_1_bounds_2} implies that $C^{-1} < \pl_x \phi_1 < C$ and $|\pl_y \phi_1| < Ck$; the inverse function theorem then implies the bounds \eqref{eq:zeta_1_bounds_2} for $\zeta_2$. 

In the rest of this section we are going to prove that $\dist_s(\Phi_1,\id) =o(1)$.
This relies only on properties \eqref{eq:zeta_1_bounds_1}--\eqref{eq:zeta_1_bounds_2}; hence, the result also applies to $\Phi_2$, since $\zeta_2$ satisfies the same assumptions.

%%%%%%%%%%%%%%%%%%%%%%%%%%%%%%
\subsection{Step II: Squeezing the strips}
\label{sec:Step_II_2D}

\begin{lemma}
\label{lem:squeezing_2D}
Fix $\alpha \gg 1$. 
There exists a diffeomorphism $\Psi\in \Diffc(\R^2)$, $\Psi(x,y) = (x,\psi(x,y))$, such that
\beq
\label{eq:squeezing_2D}
\psi(x,y) = e^{-\alpha}\brk{y-\frac{8i}{k}} + \frac{8i}{k}, \qquad (x,y)\in [0,1]\times S_1^i\cap [0,1],
\eeq
and
\beq
\label{eq:squeezing_2D_H_s_dist}
\dist_s(\Psi,\id) \lesssim \alpha k^{-(1-s)}. 
\eeq
\end{lemma}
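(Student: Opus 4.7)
The plan is to realize $\Psi$ as the time-$1$ map of an autonomous flow whose velocity field is vertical, compactly supported, and linear (centered at $8i/k$) inside each strip $[0,1]\times S_1^i$. Concretely, I would pick a smooth cutoff $\eta:\R\to[0,1]$ equal to $1$ on $[-3,3]$ and supported in $(-4,4)$, and a cutoff $\xi\in C_c^\infty(\R)$ equal to $1$ on $[0,1]$ and supported in $(-1,2)$, and set
\[
u(x,y) = \Bigl(0,\ -\alpha\,\xi(x)\sum_{i\in\Z}\bigl(y-\tfrac{8i}{k}\bigr)\,\eta\!\bigl(ky-8i\bigr)\Bigr).
\]
The sum is locally finite since the supports of the $\eta(k\cdot-8i)$ are disjoint, so $u\in\Gamma_c(T\R^2)$. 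For $(x,y)\in[0,1]\times S_1^i$ the ODE reads $\dot y = -\alpha(y-8i/k)$, whose explicit solution $y(t) = e^{-\alpha t}(y-8i/k)+8i/k$ stays inside $S_1^i$ for all $t\ge 0$; hence the time-$1$ flow yields exactly the formula \eqref{eq:squeezing_2D}, and defines a diffeomorphism $\Psi\in\Diffc(\R^2)$.

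To establish \eqref{eq:squeezing_2D_H_s_dist}, I would use Proposition~\ref{pn:GN_inequality} applied to $u$, together with the fact that autonomous vector fields give $\int_0^1\|u(t)\|_{s,2}\,dt = \|u\|_{s,2}$. The support of $u$ consists of $O(k)$ strips of the form $[-1,2]\times S_1^i$, of total area $O(1)$. On this support $|u|\lesssim \alpha/k$ (because $|y-8i/k|\le 3/k$ wherever the corresponding summand is nonzero), which gives $\|u\|_{L^2}\lesssim \alpha/k$. For the derivative, $\partial_y u$ contributes a bounded multiple of $\alpha$ on the support (the linear factor is differentiated to give $-\alpha$, while the cutoff derivative $k\eta'$ is multiplied by $|y-8i/k|=O(1/k)$, also producing $O(\alpha)$), and $\partial_x u$ is $O(\alpha/k)$ since $\xi'$ is bounded and $|u|\lesssim \alpha/k$. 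Therefore $\|u\|_{1,2}\lesssim \alpha$, and
\[
\|u\|_{s,2}\ \lesssim\ \|u\|_{L^2}^{1-s}\,\|u\|_{1,2}^s\ \lesssim\ (\alpha/k)^{1-s}\,\alpha^s\ =\ \alpha\,k^{-(1-s)},
\]
which is exactly the claimed bound.

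There is no real obstacle here; the argument is a direct construction plus interpolation. The only points that require a moment of care are (i) verifying that the enlarged cutoff supports of the different summands remain disjoint (which follows from the definition of $S_1^i$ and the fact that $\eta$ is supported in $(-4,4)$, while consecutive centers are $8/k$ apart), and (ii) checking that both the $\partial_x u$ term and the cutoff derivative in $\partial_y u$ do not spoil the estimate $\|u\|_{1,2}\lesssim \alpha$, which is immediate from $|u|\lesssim\alpha/k$ on the support. Everything else is bookkeeping.
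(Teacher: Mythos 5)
Your argument takes the same route as the paper's: build an autonomous vertical velocity field that equals the linear squeezing $-\alpha(y-8i/k)$ on each strip $[0,1]\times S_1^i$, realize $\Psi$ as its time-$1$ flow, and bound the $H^s$-norm by Gagliardo--Nirenberg interpolation between $L^2$ and $H^1$; the pointwise and derivative estimates you give are identical to the paper's.

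One step, however, does not hold as written. The field
\[
u(x,y)=\Bigl(0,\ -\alpha\,\xi(x)\sum_{i\in\Z}\bigl(y-\tfrac{8i}{k}\bigr)\eta(ky-8i)\Bigr)
\]
is not compactly supported: the cutoff $\xi$ localizes only in $x$, while the sum ranges over all $i\in\Z$, so $\supp u$ is unbounded in the $y$-direction. Local finiteness of the sum gives smoothness but not compact support, so $u\notin\Gamma_c(T\R^2)$; moreover $\|u\|_{L^2}=\infty$, since each of the infinitely many congruent strips contributes on the order of $(\alpha/k)^2\cdot k^{-1}$ to $\|u\|_{L^2}^2$, and the interpolation step cannot be applied. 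The repair is immediate and is exactly what the paper does: multiply by a fixed $\chi\in C_c^\infty(\R^2)$ with $\chi\equiv 1$ on $[0,1]^2$ (this subsumes your $\xi$), or equivalently restrict the sum to the $O(k)$ indices $i$ with $S_1^i\cap[0,1]\neq\emptyset$. With that correction your bounds $\|u\|_{L^2}\lesssim\alpha/k$ and $\|u\|_{1,2}\lesssim\alpha$ go through unchanged and give \eqref{eq:squeezing_2D_H_s_dist}.
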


In other words, $\psi$ squeezes each intervals $S_1^i$ linearly around their midpoint by a factor of $e^{-\alpha}$, and has a small cost.

\begin{proof}
Let $u_1 \in C_c^\infty((-4,4))$, such that $u_1(y) = -y$ for $y\in [-3,3]$, and extend periodically. 
Let $\chi \in C_c^\infty(\R^2)$ such that $\chi\equiv 1$ on $[0,1]^2$.
Define $u_k(x,y) := \frac{\alpha}{k} u_1(ky)\chi(x,y)$.

Note that 
\[
\|u_k\|_{L^2} \lesssim \alpha/k, \qquad \|d u_k\|_{L^2} \lesssim \alpha.
\]
Therefore, by Proposition~\ref{pn:GN_inequality} we have
\beq
\label{eq:squeezing_2D_vec_field}
\|u_k\|_{H^s} \lesssim \frac{\alpha^{1-s}}{k^{1-s}} \alpha^s = \frac{\alpha}{k^{1-s}}.
\eeq

Let $\psi(t,x,y)$ be the solution of 
\[
\pl_t \psi = u_k(x,\psi), \qquad \psi(0,x,y) = y.
\]
Define $\psi(x,y) := \psi(1,x,y)$, and $\Psi(x,y) := (x,\psi(x,y))$.
A direct calculation shows that for $(x,y)\in [0,1]\times [-3/k, 3/k]$, $\psi(y) = y e^{-\alpha}$,
so by periodicity and the fact that $\chi\equiv 1$ on $[0,1]^2$, $\psi$ satisfies \eqref{eq:squeezing_2D}.
 
The trajectory from $\id$ to $\Psi$ defined by $\Psi_t(x,y) = (x,\psi(t,x,y))$, together with the bound \eqref{eq:squeezing_2D_vec_field}, implies \eqref{eq:squeezing_2D_H_s_dist}.
\end{proof}

Note that in $[0,1]^2$, $\psi$ is independent of $x$. Therefore, slightly abusing notation, we write
\[
\Psi(x,y) = (x,\psi(y)), \qquad \Psi^{-1}(x,y) = (x,\psi^{-1}(y)).
\]
We will later have $\alpha$ depend on $k$.
Since eventually we want $\dist_s(\Psi,\id) = o(1)$ when $k\to \infty$, \eqref{eq:squeezing_2D_H_s_dist} implies the bound
\beq
\label{eq:bounds_alpha}
\alpha \ll k^{1-s}.
\eeq

%%%%%%%%%%%%%%%%%%%%%%%%%%%%%%
\subsection{Step III: Flowing along the squeezed strips}
\label{sec:Step_III_2D}

Denote 
\[
\lambda(\alpha,k) = \frac{e^{-\alpha}}{k},
\]
and consider
\[
\Phi_1 \circ \Psi^{-1} (x,y) = (x + \zeta_1(x,\psi^{-1}(y)), \psi^{-1}(y)) =: (x + \tzeta_1(x,y), \psi^{-1}(y)).
\]
Since $\zeta_1$ is supported inside $(0,1)\times S_1$, we have that $\tzeta_1 =\zeta_1\circ \Psi^{-1}$ is supported on $(0,1) \times \psi(S_1)$, that is, on $\approx k$ strips of thickness $\approx \lambda$.
Furthermore, from \eqref{eq:zeta_1_bounds_2} and \eqref{eq:squeezing_2D} we have
\beq
\label{eq:tilde_zeta_1_bounds}
\tzeta_1\ge 0,\quad
-1 + C^{-1}< \pl_x \tzeta_1 < C, \quad
|\pl_y \tzeta_1| < C\lambda^{-1}.
\eeq

We start by defining a path from $\id$ to
\[
\tTheta := \Psi \circ \Phi_1 \circ \Psi^{-1} (x,y) = (x + \tzeta_1(x,y), y),
\]
using a slight variation of the construction of \cite[Lemma 3.2]{BBHM13} that proves that the $H^s$ geodesic distance is vanishing for $s<1/2$.
Let
\beq
\label{eq:tau_g_def}
\tau_{y}(x) = x -\lambda \tzeta_1(x,y), \qquad g_{y}  = \tau_{y}^{-1}.
\eeq
It is clear that $\tau_{y}$ is increasing for all small enough $\lambda$.
We will henceforth restrict our attention to such $\lambda$, for which the definition of $g_y$ makes sense.
We will also write $\tau(x,y)$ and $g(t,y)$ instead of $\tau_{y}(x)$ and $g_{y} (t)$.
Define
\[
\tTheta(t,x,y) = (\ttheta(t,x,y), y) 
\]
by
\beq
\label{eq:ttheta_def}
\ttheta(t,x,y) := 
\begin{cases}
	x							&\mbox{ if }t\le  \tau(x,y) \\
	x+   (1+\lambda)^{-1}(t- \tau(x,y))		&\mbox{ if }\tau(x,y) \le t  \le x+\tzeta_1(x,y) \\
	x+\tzeta_1(x,y)						&\mbox{ if }x+\tzeta_1(x,y)\le t \le 1.
\end{cases}
\eeq

Note that $\ttheta$ solves
\[
\frac \pl{\pl t}\ttheta(t,x,y) = u(t,\ttheta(t,x,y),y), \qquad \ttheta(0,x)=x,
\]
where
\begin{equation}
\label{eq:def_u}
u_t(x,y) = u(t,x,y) :=  (1+\lambda)^{-1} \ind_{t < x < g(t,y)} = (1+\lambda)^{-1} \ind_{\tau(x,y) < t < x} .
\end{equation}
See Figure~\ref{fig:BBHM}.

\begin{figure}
\begin{center}
\includegraphics[height=3.5in]{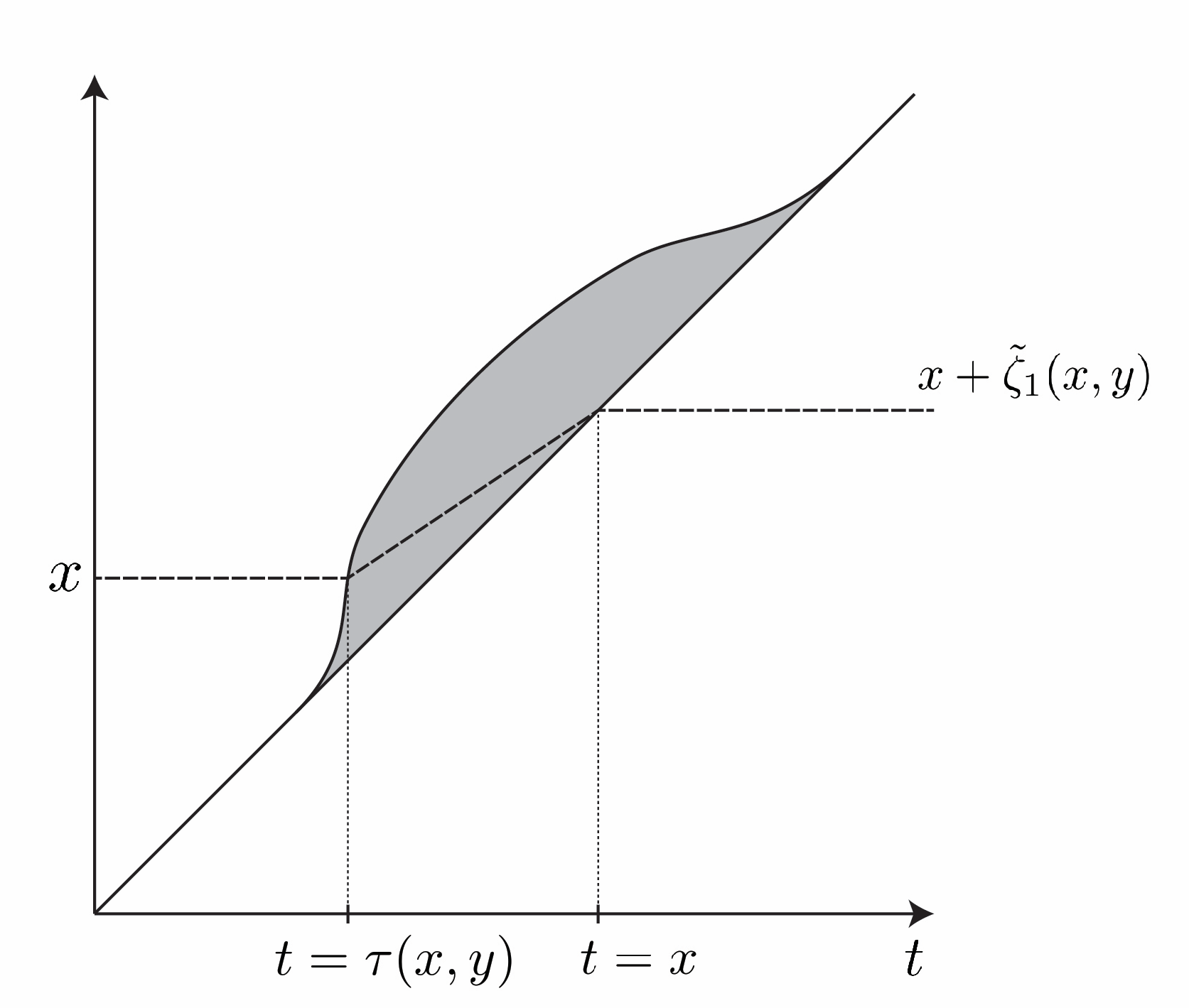}
\end{center}
\caption{A sketch of the flow $\ttheta$. 
The dashed line shows the trajectory starting from a point $x$ over time.
Its slope between $t=\tau(x,y)$ and $t=x$ is $(1+\lambda)^{-1}$.
The grey domain is the support of the vector field $u$.} 
\label{fig:BBHM}
\end{figure}

%%%%%%%%
We will see below, in Lemma~\ref{lem:properties_g}, that $g(t,y) = t + \lambda\tzeta_1(t,y) + O(\lambda^2)$.
Since for every fixed $x$, $\tzeta_1(x,\cdot)$ is supported on $\approx k$ intervals of thickness $\approx \lambda$, 
it follows from \eqref{eq:def_u} and \eqref{eq:bounds_g} that for every fixed $t$, $u_t$ is supported on $\approx k$ disjoint compact sets, each contained in a square of edge length $\approx \lambda$, see Figure~\ref{fig:supp_u}.

\begin{figure}
\begin{center}
\includegraphics[height=3.5in]{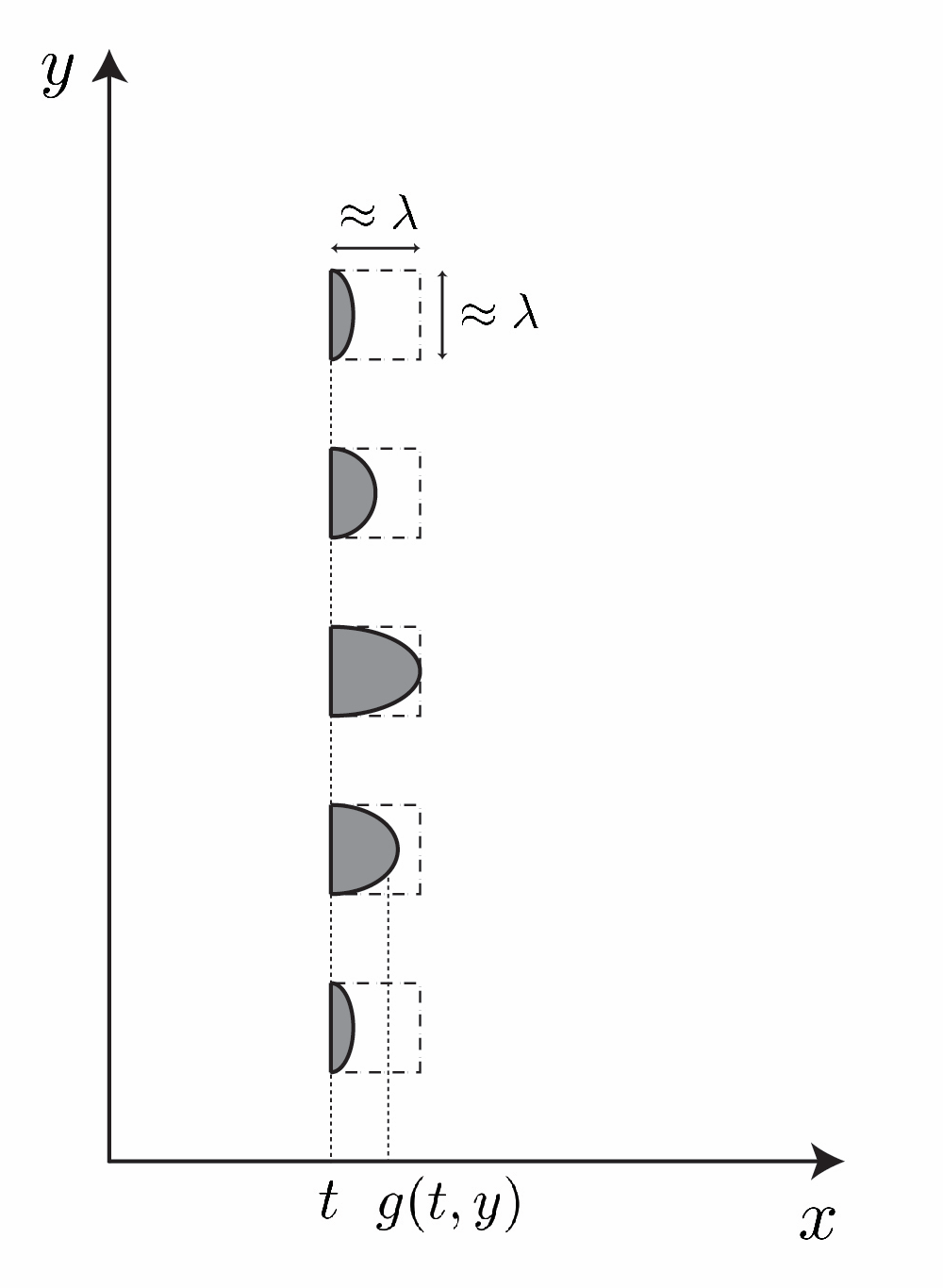}
\end{center}
\caption{A sketch of the support of $u_t$ for a fixed $t$.
The support consists of $\approx k$ sets, each contained in a square of diameter $\approx \lambda$.
Since the derivatives of $g$ are uniformly bounded \eqref{eq:bounds_dg}, the boundary of the support consists of $\approx k$ sets of length $\approx \lambda$.} 
\label{fig:supp_u}
\end{figure}

%%%%%%%%
We obtained that $u_t$ has a small support, which is essential for using the subcriticality of $H^s$.
However, since $u_t \notin H^s$  for $s\ge 1/2$, we first need to regularize.
To do this, fix $\delta \ll \lambda$ (to be determined)
and define
\beq
\label{eq:u_delta_def}
u_{\delta,t}(x,y) = u_\delta(t,x,y) := \int_\R u(t,x-x',y) \eta_\delta(x')dx' =  \frac 1{1+\lambda}\int_{x-g(t,y)}^{x-t} \eta_\delta(x')dx'
\eeq
for $\eta_\delta\in C^\infty_c(\R)$ such that
\[
\eta_\delta\ge 0, \qquad\int_{-\infty}^0\eta_\delta = \int_0^\infty \eta_\delta= \frac{1}{2},\qquad
\mbox{supp}(\eta_\delta)\subset [-\delta, \delta], 
\qquad \|\eta_\delta\|_\infty \le \frac C {\delta}.
\]

%%%%%%%%%%%%%%%%%%%
Let $\theta(t,x,y)$ be the solution of
\beq
\label{eq:theta_def}
\frac \pl{\pl t}\theta(t,x,y) = u_\delta(t,\theta(t,x,y),y), \qquad \theta(0,x,y)=x
\eeq
and define $\theta(x,y) = \theta(1,x,y)$.
Define $\Theta\in \Diffc(\R^2)$ by 
\beq
\label{eq:Theta_def}
\Theta(x,y) = (\theta(x,y),y).
\eeq

In the rest of this section (which is by far the most technical part of this paper), we prove some estimates on $\Theta$. 
First, we prove that the path between $\id$ and $\Theta$ defined by flowing along $u_\delta$ is short, and therefore the distance from $\id$ to $\Theta$ is small (for an appropriate choice of $\lambda$ and $\delta$):
\begin{lemma}
\label{lem:Theta_cost}
\beq
\label{eq:Theta_cost}
\dist_s(\id,\Theta) \lesssim \frac{k^{1/2}\lambda^{(2-s)/2}}{\delta^{s/2}}
\eeq
\end{lemma}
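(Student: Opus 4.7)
The plan is to observe that the path $\{\Theta_t\}$ generated by \eqref{eq:theta_def} has velocity field $(u_\delta(t,\cdot,\cdot),0)$ in the $x$-direction only, so the definition of the geodesic distance gives
\[
\dist_s(\id,\Theta)\le \int_0^1 \|u_\delta(t)\|_{s,2}\,dt,
\]
where $\|u_\delta(t)\|_{s,2}$ denotes the $H^s$-norm on $\R^2$ of the scalar function $u_\delta(t,\cdot,\cdot)$. I would then bound $\|u_\delta(t)\|_{s,2}$ uniformly in $t$ by invoking the interpolation inequality of Proposition~\ref{pn:GN_inequality}, which reduces matters to separately estimating $\|u_\delta(t)\|_{L^2}$ and $\|u_\delta(t)\|_{1,2}$.

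For the $L^2$ bound, I would use that $\|u_\delta\|_{L^\infty}\le (1+\lambda)^{-1}\le 1$ and that, for each fixed $t$, $u_\delta(t,\cdot,\cdot)$ is supported in the $\delta$-thickening (in $x$) of $\supp u_t$. Since $\supp u_t$ consists of $\approx k$ pieces each contained in a square of side $\approx\lambda$ (as depicted in Figure~\ref{fig:supp_u}) and $\delta\ll\lambda$, the total area of the support is $\lesssim k\lambda^2$, yielding $\|u_\delta(t)\|_{L^2}\lesssim k^{1/2}\lambda$.

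For the $H^1$ bound, differentiating \eqref{eq:u_delta_def} gives the explicit formulas $\pl_x u_\delta=(1+\lambda)^{-1}\brk{\eta_\delta(x-t)-\eta_\delta(x-g(t,y))}$ and $\pl_y u_\delta=(1+\lambda)^{-1}\eta_\delta(x-g(t,y))\,\pl_y g(t,y)$. Both are supported in an $x$-strip of width $\lesssim \delta$ around the left and right edges $\{x=t\}$, $\{x=g(t,y)\}$ of $\supp u_t$, combined with the restriction of $y$ to the $\approx k$ strips of thickness $\approx \lambda$ coming from the support of $\tzeta_1$; the total area is therefore $\lesssim k\lambda\delta$. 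Since $\|\eta_\delta\|_{L^\infty}\lesssim 1/\delta$ and $|\pl_y g|\lesssim 1$ (which I would obtain from implicit differentiation of $\tau\circ g = \id$ together with the bound $|\pl_y\tzeta_1|\lesssim 1/\lambda$ from \eqref{eq:tilde_zeta_1_bounds}), each derivative is pointwise bounded by $1/\delta$, giving $\|du_\delta(t)\|_{L^2}^2\lesssim \delta^{-2}\cdot k\lambda\delta=k\lambda/\delta$. Since $\delta\ll \lambda$, this dominates the $L^2$ contribution, so $\|u_\delta(t)\|_{1,2}\lesssim (k\lambda/\delta)^{1/2}$.

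Plugging into the interpolation inequality,
\[
\|u_\delta(t)\|_{s,2}\lesssim \brk{k^{1/2}\lambda}^{1-s}\brk{k\lambda/\delta}^{s/2}=k^{1/2}\lambda^{(2-s)/2}\delta^{-s/2},
\]
uniformly in $t\in[0,1]$, and integrating over $t$ yields \eqref{eq:Theta_cost}. The main obstacle is the $\pl_y u_\delta$ estimate: the factor $\pl_y g$ could in principle be as large as $\lambda\cdot k$ without the squeezing step, which would ruin the bound. It is precisely the squeezing in Step~II that forces $|\pl_y\tzeta_1|\lesssim 1/\lambda$ and hence $|\pl_y g|\lesssim 1$, making the regularization-plus-interpolation strategy work.
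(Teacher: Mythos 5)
Your proposal is correct and follows essentially the same route as the paper: reduce to estimating $\|u_\delta(t)\|_{L^2}$ and $\|du_\delta(t)\|_{L^2}$ using the $L^\infty$ bounds and the measure of the supports (area $\approx k\lambda^2$ for $u_\delta$, area $\approx k\lambda\delta$ for $du_\delta$), and then interpolate via Proposition~\ref{pn:GN_inequality}; this is precisely Lemma~\ref{lem:bounds_u_delta} combined with the flow definition. The only (harmless) variation is that you obtain $|\pl_y u_\delta|\lesssim 1/\delta$ by differentiating the explicit formula \eqref{eq:u_delta_def} and using $|\pl_2 g|\lesssim 1$, whereas the paper reaches the same bound through a finite-difference Lipschitz argument in $y$.
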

The proof of this lemma will follow from Lemma~\ref{lem:bounds_u_delta} below.

We then prove that the regularization does not change the endpoint $\Theta$ by much (with respect to $\tTheta$), and we prove bounds on the derivatives of $\Theta$. These are concluded in the following proposition:
\begin{proposition}
\label{pn:Theta}
The diffeomorphism $\Psi^{-1}\circ \Theta \circ \Psi$ is of the form
\beq
\label{eq:Psi_Theta_Psi_form}
\Psi^{-1}\circ \Theta \circ \Psi = (x + \sigma(x,y), y),
\eeq
where $\sigma(x,y) \ge 0$ is supported on $(0,1)\times S_1$ and satisfies
\beq
\label{eq:bounds_sigma}
|\sigma(x,y) - \zeta_1(x,y)| \lesssim \frac{\delta}{\lambda}, \qquad
-1 + C^{-1} < \pl_x \sigma < C, \qquad
|\pl_y \sigma| \lesssim k.
\eeq
\end{proposition}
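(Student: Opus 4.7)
The plan is to work in the $y' = \psi(y)$ variable and analyze the time-$1$ flow $\theta(x,y')$ directly. Since $\Psi(x,y) = (x, \psi(y))$ acts trivially on the $x$-coordinate,
\[
\Psi^{-1}\circ \Theta\circ\Psi(x,y) = (\theta(x,\psi(y)), y),
\]
so \eqref{eq:Psi_Theta_Psi_form} holds with $\sigma(x,y) := \theta(x,\psi(y)) - x$. Combined with the identity $\tzeta_1(x,\psi(y)) = \zeta_1(x,y)$ (which follows on $[0,1]^2$, where $\Psi$ is $x$-independent), this reduces the proposition to three claims about the time-$1$ flow $\theta$: \emph{(i)} $\theta(x,y') \ge x$, with the support of $\theta - \id$ contained (up to a $\delta$-enlargement) in $(0,1) \times \psi(S_1)$; \emph{(ii)} the displacement estimate $|\theta(x,y') - x - \tzeta_1(x,y')| \lesssim \delta/\lambda$; and \emph{(iii)} $C^{-1} < \partial_x\theta < C$ and $|\partial_{y'}\theta| \lesssim \lambda^{-1}$. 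The chain rule together with $\psi'(y) = e^{-\alpha} = \lambda k$ on the relevant strips then converts the $\partial_{y'}\theta$ bound into the claimed $|\partial_y\sigma| \lesssim k$, and $\partial_x \sigma = \partial_x\theta - 1$ gives the $\partial_x\sigma$ bound.

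Claim \emph{(i)} is short: from \eqref{eq:u_delta_def} and $\eta_\delta \ge 0$ we have $u_\delta \ge 0$, so the flow only pushes points to the right, whence $\sigma \ge 0$. For the support, if $\tzeta_1(\cdot, y') \equiv 0$ then $\tau_{y'} = \id$ and $g(\cdot,y') = \id$, so $u_\delta(t,\cdot,y') \equiv 0$ by \eqref{eq:u_delta_def} and hence $\theta(x,y') = x$; this covers $y' \notin \psi(S_1)$ and, after pulling back by $\Psi$, gives the $y$-support of $\sigma$ in $S_1$. A similar argument in the $x$-direction uses that $\tzeta_1$ vanishes outside $(0,1)$, with only a harmless $\delta$-thickening coming from the mollifier.

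The heart of the matter is the displacement estimate \emph{(ii)}, and this is the step I expect to present the main obstacle. The strategy is to compare $\theta$ with the unregularized flow $\ttheta$, which achieves $\ttheta(1,x,y') = x + \tzeta_1(x,y')$ exactly by \eqref{eq:ttheta_def}. The key qualitative observation is that $u_\delta - u$ is supported within $x$-distance $\delta$ of the two moving fronts $\{x = t\}$ and $\{x = g(t,y')\}$, across which the $\ttheta$-trajectory passes at a relative speed $1 - (1+\lambda)^{-1} = \lambda/(1+\lambda) \asymp \lambda$. Thus each trajectory spends time of order $\delta/\lambda$ inside each mollification slab, and during this time the velocity error $|u_\delta - u|$ is bounded by $(1+\lambda)^{-1}$, contributing a displacement error of order $\delta/\lambda$. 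Making this rigorous requires care, since a naive Gronwall argument using the $\delta^{-1}$ Lipschitz constant of $u_\delta$ in the slabs would blow up like $e^{C/\lambda}$; instead one must exploit the explicit structure of \eqref{eq:def_u} and \eqref{eq:u_delta_def} together with the monotonicity of trajectories (they do not cross, since the flow is in a single direction) and the mass identity $\int \eta_\delta = 1$, which ensures that the regularized trajectory still sweeps out essentially the right amount of velocity, up to boundary effects of size $\delta/\lambda$.

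Finally, claim \emph{(iii)} follows by differentiating the ODE \eqref{eq:theta_def} in $x$ and $y'$ and applying Gronwall to the resulting variational equations. The required bounds on $\partial_x u_\delta$ and $\partial_{y'} u_\delta$ reduce, via \eqref{eq:u_delta_def} and $\|\eta_\delta\|_\infty \le C/\delta$, to bounds on the derivatives of $g$: differentiating the defining relation $\tau(g(t,y'),y') = t$ with $\tau_{y'}(x) = x - \lambda\tzeta_1(x,y')$ and using \eqref{eq:tilde_zeta_1_bounds} gives $\partial_x g = 1 + O(\lambda)$ and $|\partial_{y'} g| \lesssim 1$ (the dangerous factor $\lambda^{-1}$ in $\partial_y\tzeta_1$ is exactly cancelled by the prefactor $\lambda$ in $\tau$). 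Integrating the variational ODEs over $t \in [0,1]$ then yields \emph{(iii)}, and the chain rule together with $\psi'(y) = \lambda k$ completes the proof of \eqref{eq:bounds_sigma}.
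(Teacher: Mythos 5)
Your overall reduction is the same as the paper's: set $\sigma(x,y) = \theta(x,\psi(y)) - x$, observe $\zeta_1(x,y) = \tzeta_1(x,\psi(y))$, and convert bounds on $\theta$, $\pl_x\theta$, $\pl_{y'}\theta$ into \eqref{eq:bounds_sigma} via the chain rule with $\psi'(y)=e^{-\alpha}=\lambda k$. Claim \emph{(i)} and the support argument are correct, and your heuristic for the displacement estimate \emph{(ii)} identifies the right mechanism; the paper makes this rigorous not by a Gronwall or mass-balance argument but by sandwiching $\theta$ between the flows of two explicit comparison fields $u_\delta^\pm$ whose endpoints can be computed directly.

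The genuine gap is in claim \emph{(iii)}. You propose to differentiate the ODE in $x$ and $y'$ and ``apply Gronwall,'' reducing everything to $\pl_x g=1+O(\lambda)$, $|\pl_{y'}g|\lesssim 1$, and $\|\eta_\delta\|_\infty\le C/\delta$. This does not close. The variational equation gives $\pl_x\theta(1,x)=\exp\bigl(\int_0^1\pl_xu_\delta(t,\theta(t,x))\,dt\bigr)$, and $\pl_xu_\delta=(1+\lambda)^{-1}[\eta_\delta(x-t)-\eta_\delta(x-g(t))]$ is of size $\delta^{-1}$ on two slabs of width $\delta$ around the moving fronts $\{x=t\}$ and $\{x=g(t)\}$, which the trajectory crosses at a relative speed that degenerates to $\sim\lambda$ on the inner edge. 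A direct change of variable (e.g.\ $v=\int_{\theta-g}^\infty\eta_\delta$) shows $\int_0^1|\pl_xu_\delta(t,\theta(t))|\,dt\approx 2\log(1/\lambda)$, so Gronwall with absolute values only yields $\pl_x\theta\lesssim\lambda^{-2}$, far too weak. The correct bound $\pl_x\theta\le C$ relies on a cancellation between the $\log(1/\lambda)$ contributions of the two fronts in the \emph{signed} integral, which your argument does not address. The paper makes this cancellation manifest by passing to $w=\theta^{-1}$ and working with $q=\pl_tw+\pl_xw$: the forcing in the transport equation for $q$ is $\pl_tu_\delta+\pl_xu_\delta=(1+\lambda)^{-1}\eta_\delta(x-g(t))(g'(t)-1)$, in which the $\eta_\delta(x-t)$ terms cancel algebraically and the surviving piece carries the extra factor $|g'-1|\lesssim\lambda$; a further change of variable then bounds $\int_0^1|(\pl_tu_\delta+\pl_xu_\delta)/(1-u_\delta)|\,dt$ by a constant. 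The $y'$-derivative bound is likewise delicate and is obtained by a separate comparison with a $y$-shifted vector field together with a specific choice of mollifier, not by a direct Gronwall estimate on the variational equation, whose Lipschitz coefficient along trajectories is again unintegrable in absolute value.
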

This proposition is proved at the end of this subsection, after some preliminary lemmas.
The conclusion of the proof of Theorem~\ref{thm:main_2D} (in Section~\ref{sec:Step_IV_2D} below) only uses \eqref{eq:Theta_cost}-\eqref{eq:bounds_sigma} and not the technical details that appear below in this subsection.

%%%%%%%%
We begin the proofs of Lemma~\ref{lem:Theta_cost} and Proposition~\ref{pn:Theta} by some estimates on the unregularized flow $u$:
\begin{lemma}
\label{lem:properties_g}
The following bounds hold:
\beq
\label{eq:bounds_g}
g(t,y) = t + \lambda\tzeta_1(t,y) + O(\lambda^2), 
\qquad 
g(t,y) = t \iff \tzeta_1(t,y) = 0.
\eeq
\beq
\label{eq:bounds_dg}
\qquad \pl_1g = 1+ \lambda \pl_1\tzeta_1 + O(\lambda^2) = 1 +O(\lambda), 
\qquad |\pl_2g| < C.
\eeq
\end{lemma}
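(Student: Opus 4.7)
The plan is to derive everything from the defining identity $\tau(g(t,y), y) = t$, together with the formula $\tau(x,y) = x - \lambda \tzeta_1(x,y)$ and the bounds \eqref{eq:tilde_zeta_1_bounds} on $\tzeta_1$. Nothing here requires genuine analysis beyond implicit differentiation; the content is that the squeezing from Step II has calibrated the $y$-derivatives of $\tzeta_1$ so that all the multiplications by $\lambda$ cancel cleanly.

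For the first assertion, substitute $x = g(t,y)$ into the definition of $\tau$ to get
\[
g(t,y) = t + \lambda\, \tzeta_1(g(t,y), y).
\]
Since $\tzeta_1$ is bounded (it equals $\zeta_1 \circ \Psi^{-1}$ and $0 \le \zeta_1 \le C$), this immediately gives $g(t,y) - t = O(\lambda)$. Then the $x$-Lipschitz bound $|\pl_1 \tzeta_1| < C$ from \eqref{eq:tilde_zeta_1_bounds} yields $\tzeta_1(g(t,y),y) = \tzeta_1(t,y) + O(\lambda)$, and plugging this back in produces \eqref{eq:bounds_g}. The equivalence $g(t,y) = t \iff \tzeta_1(t,y) = 0$ follows at once: since $g(\cdot,y)$ is the inverse of $\tau(\cdot,y)$, the fixed points of $g(\cdot,y)$ coincide with those of $\tau(\cdot,y)$, and $\tau(t,y) = t$ is equivalent to $\tzeta_1(t,y) = 0$.

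For the derivative bounds in \eqref{eq:bounds_dg}, I differentiate $\tau(g(t,y),y) = t$ in $t$ and in $y$. Using $\pl_x\tau = 1 - \lambda\,\pl_1 \tzeta_1$ and the bound $\pl_1 \tzeta_1 > -1 + C^{-1}$ from \eqref{eq:tilde_zeta_1_bounds}, we see $\pl_x\tau$ is bounded above and bounded away from zero for all small $\lambda$, so the implicit differentiation is justified and yields
\[
\pl_1 g = \frac{1}{1 - \lambda\, \pl_1 \tzeta_1(g(t,y),y)} = 1 + \lambda\, \pl_1 \tzeta_1(g(t,y),y) + O(\lambda^2),
\]
which in particular is $1 + O(\lambda)$. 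For $\pl_2 g$ I get
\[
\pl_2 g = \frac{\lambda\, \pl_2 \tzeta_1(g(t,y),y)}{1 - \lambda\, \pl_1 \tzeta_1(g(t,y),y)}.
\]
This is the step that actually uses the squeezing: by \eqref{eq:tilde_zeta_1_bounds} one has $|\pl_2 \tzeta_1| \lesssim \lambda^{-1}$, so the factor $\lambda$ in the numerator cancels, giving $|\pl_2 g| < C$.

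There is no real obstacle; the only point requiring care is the cancellation in $\pl_2 g$, where the a priori large factor $|\pl_2 \tzeta_1| \lesssim \lambda^{-1}$ is exactly compensated by the $\lambda$ in the definition of $\tau$. This is precisely why the squeezing factor $e^{-\alpha}$ and the $x$-scale of the strips are matched in Steps I--II, and it is the feature that makes the unregularized flow $u$ of \eqref{eq:def_u} have, at each fixed $t$, support contained in order-$k$ squares of side $\approx \lambda$, as claimed in the discussion surrounding Figure \ref{fig:supp_u}.
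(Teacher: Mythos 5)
Your proof is correct and takes essentially the same approach as the paper: both work directly from the identity $\tau(g(t,y),y)=t$, and the derivative computations for $\pl_1 g$ and $\pl_2 g$ are identical, including the crucial cancellation of $\lambda$ against the $\lambda^{-1}$ bound on $\pl_2\tzeta_1$. The only cosmetic difference is that you obtain the $O(\lambda^2)$ remainder by a two-step bootstrap (first $g-t=O(\lambda)$, then substitute back), whereas the paper runs an Intermediate Value Theorem argument on the error function $e(t) = g(t) - t - \lambda\tzeta_1(t)$; the two are interchangeable here.
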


\begin{proof}
We fix $y$ and write $g(t) = g(t,y)$  and $\tzeta_1(t) = \tzeta_1(t,y)$.
Let $\tilde g(t) = t+\lambda\tzeta_1(t)$, and let $e(t) = g(t)-\tilde g(t)$.
Then
\[
t = \tau (g(t)) = \tau(t+\lambda \tzeta_1(t) + e(t)) = t+\lambda \tzeta_1(t) + e(t) -\lambda 
\tzeta_1\brk{t+\lambda \tzeta_1(t) + e(t)}.
\]
Thus $e = e(t)$ solves
\[
f(e;t) =  e +\lambda\tzeta_1(t) - \lambda \tzeta_1\brk{t+\lambda \tzeta_1(t) + e}  = 0.
\]
Since $|f(0;t)|\le \lambda^2 \|\pl_1\tzeta_1\|_\infty\|\tzeta_1\|_\infty <C\lambda^2$ for all $t$ and 
$\partial_e f \ge 1- \lambda \|\pl_1\tzeta_1\|_\infty \ge 1-C\lambda$ (here we use \eqref{eq:tilde_zeta_1_bounds}),
the Intermediate Value Theorem implies that a unique $e(t)$ such that $f(e(t);t)=0$ and $e(t)=O(\lambda^2)$.
The second part of \eqref{eq:bounds_g} is immediate from the definition of $g$.

For proving \eqref{eq:bounds_dg}, we use \eqref{eq:tilde_zeta_1_bounds} and calculate
\[
\pl_1 g = \pl_1 \tau ^{-1} = \frac 1{\pl_1 \tau\circ g} =
\frac 1{1-\lambda \pl_1 \tzeta_1\circ g} = 1+ \lambda \pl_1\tzeta_1 + O(\lambda^2),
\]
and
\[
\Abs{\pl_2 g} = \Abs{\frac{\pl_2 \tau}{\pl_1 \tau} }
= \Abs{ \frac{\lambda \pl_2 \tzeta_1}{1-\lambda \pl_1 \tzeta_1} } < C.
\]
\end{proof}
%%%%%%%%

%%%%%%%%%%%%%%%%%
The following lemma, and in particular \eqref{eq:bounds_H_s_norm_u_delta}, immediately implies Lemma~\ref{lem:Theta_cost}.
\begin{lemma}
\label{lem:bounds_u_delta}
For a fixed $t$, $u_{\delta,t}(x,y)\in W^{1,\infty}(\R^2)$, and 
\beq
\label{eq:bounds_du_delta}
\|du_{\delta,t}\|_\infty \lesssim \frac{1}{\delta}.
\eeq
Moreover,
\beq
\label{eq:bounds_H_s_norm_u_delta}
\|u_{\delta,t}\|^2_{H^s} \lesssim \frac{k\lambda^{2-s}}{\delta^s}.
\eeq
\end{lemma}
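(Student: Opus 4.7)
The plan is to compute $du_{\delta,t}$ explicitly from the definition, localize the supports of $u_{\delta,t}$ and of $du_{\delta,t}$ in the plane to get $L^2$ estimates by ``$L^\infty$ $\times$ area of support,'' and then apply the Gagliardo--Nirenberg inequality (Proposition~\ref{pn:GN_inequality}).

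First I differentiate the representation \eqref{eq:u_delta_def} under the integral sign:
\[
\pl_x u_{\delta,t}(x,y) = \frac{1}{1+\lambda}\bigl(\eta_\delta(x-t)-\eta_\delta(x-g(t,y))\bigr), \qquad
\pl_y u_{\delta,t}(x,y) = \frac{\pl_2 g(t,y)}{1+\lambda}\eta_\delta(x-g(t,y)).
\]
Combined with $\|\eta_\delta\|_\infty \lesssim 1/\delta$ and the bound $|\pl_2 g|<C$ from \eqref{eq:bounds_dg}, this yields $\|du_{\delta,t}\|_\infty\lesssim 1/\delta$ and takes care of the first assertion.

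Next I estimate $\|u_{\delta,t}\|_{L^2}$. Since $\|u_{\delta,t}\|_\infty\le (1+\lambda)^{-1}\le 1$, it suffices to bound the area of $\supp u_{\delta,t}$. For fixed $t$, the $y$-slice of this support is nonempty only when $\tzeta_1(t,y)>0$, and in that case is contained in an $x$-interval of length $g(t,y)-t+2\delta=\lambda\tzeta_1(t,y)+O(\lambda^2)+2\delta\lesssim\lambda$ (using $\delta\ll\lambda$ and \eqref{eq:bounds_g}). Because $\tzeta_1(t,\cdot)=\zeta_1(t,\psi^{-1}(\cdot))$ is supported on $\psi(S_1)$, which is a union of $\approx k$ intervals of length $\approx \lambda$, the total area of $\supp u_{\delta,t}$ is $\lesssim k\lambda^2$, and hence $\|u_{\delta,t}\|_{L^2}^2\lesssim k\lambda^2$.

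For $du_{\delta,t}$ the supports are much thinner in $x$: from the explicit formulas above, each partial derivative vanishes unless $|x-t|\le\delta$ or $|x-g(t,y)|\le\delta$, and (using $g(t,y)=t$ when $\tzeta_1(t,y)=0$) only contributes when $y$ lies in the set $\{\tzeta_1(t,\cdot)>0\}$ of measure $\lesssim k\lambda$. Thus $|\supp du_{\delta,t}|\lesssim k\lambda\delta$, which combined with the $L^\infty$ bound gives
\[
\|du_{\delta,t}\|_{L^2}^2 \lesssim \frac{1}{\delta^2}\cdot k\lambda\delta = \frac{k\lambda}{\delta}.
\]
Since $\delta\ll \lambda$ implies $k\lambda^2\ll k\lambda/\delta$, we have $\|u_{\delta,t}\|_{1,2}^2\lesssim k\lambda/\delta$. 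Plugging the $L^2$ and $W^{1,2}$ bounds into Proposition~\ref{pn:GN_inequality} gives
\[
\|u_{\delta,t}\|_{H^s}^2 \lesssim \bigl(k\lambda^2\bigr)^{1-s}\Bigl(\frac{k\lambda}{\delta}\Bigr)^s = \frac{k\lambda^{2-s}}{\delta^s},
\]
as desired. The main subtlety I expect is bookkeeping the supports precisely—in particular justifying that the $y$-support of $\pl_x u_{\delta,t}$ is confined to $\{\tzeta_1(t,\cdot)>0\}$ despite the single-term $\eta_\delta(x-t)$ appearing unrestricted, which comes from the cancellation $\eta_\delta(x-t)-\eta_\delta(x-g(t,y))=0$ on the complementary set where $g(t,y)=t$.
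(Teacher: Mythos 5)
Your proof is correct and follows essentially the same route as the paper: identify the support of $u_{\delta,t}$ as $\approx k$ sets of area $\approx\lambda^2$ and the support of $du_{\delta,t}$ as $\approx k$ sets of area $\approx\lambda\delta$, combine with $L^\infty$ bounds to get the $L^2$ and $W^{1,2}$ estimates, then interpolate via Proposition~\ref{pn:GN_inequality}. The only cosmetic difference is that you obtain the $y$-Lipschitz bound by differentiating the integral representation of $u_{\delta,t}$ directly, whereas the paper estimates the finite difference $\|u(t,\cdot,y'+h)-u(t,\cdot,y')\|_{L^1}$ and convolves with $\eta_\delta$; both yield $|\pl_y u_{\delta,t}|\lesssim 1/\delta$ from $|\pl_2 g|\le C$.
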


\begin{proof}
$|\pl_1 u_{\delta,t}| < C/\delta$ follows from the definition of $u_\delta$ and the bounds on $\eta_\delta$.
We now show that $u_\delta$ is also Lipschitz with respect to the $y$ variable. 
Indeed, note that
\[
\Abs{u(t,x,y'+h) - u(t,x,y')} = (1+\lambda)^{-1} \ind_{g(t,y)< x < g(t,y+h)},
\]
if $g(t,y+h)>g(t,y)$, and similarly if not.
By \eqref{eq:bounds_dg},
\[
\Abs{g(t,y+h) - g(t,y)} \le |h|\,\|\pl_2 g\|_\infty \le C|h|
\]
and therefore we have
\[
\| u(t,\cdot,y'+h) - u(t,\cdot,y') \|_1 \le (1+\lambda)^{-1}C|h| \lesssim |h|.
\]
Finally,
\[
\Abs{u_\delta(t,x,y'+h) - u_\delta(t,x,y')} 
\le \|\eta_\delta\|_\infty \,\| u(t,\cdot,y'+h) - u(t,\cdot,y') \|_1
\lesssim \frac{|h|}{\delta},
\]
which completes the proof of \eqref{eq:bounds_du_delta}.

Now, similar to $u_t$, $u_{\delta,t}$ is supported on $\approx k$ disjoint compact sets, each contained in a square of edge length $\approx \lambda$.
Since $u_t$ is an indicator function, $du_{\delta,t}$ is supported on a $\delta$-neighborhood of the boundary of $\supp u_t$.
Since $|\pl_2 g| \le C$ (see \eqref{eq:bounds_dg}), it follows that $d u_{\delta,t}$ is supported on $\approx k$ sets of area of $\approx \delta\lambda$ (see Figure~\ref{fig:supp_u}).

Since $|u_{\delta,t}|_\infty < 1$, and $u_{\delta,t}$ is supported on a set of measure $\approx k\lambda^2$, we have
\[
\|u_\delta\|_2^2 \lesssim k\lambda^2.
\]
Since $|d u_{\delta,t}| \le C/\delta$, and $du_{\delta,t}$ is supported on a set of measure $\approx k\lambda\delta$,
\[
\|du_\delta,t\|_2^2 \lesssim \frac{k\lambda}{\delta}.
\]
Estimate \eqref{eq:bounds_H_s_norm_u_delta} follows from these bounds and Proposition~\ref{pn:GN_inequality}.
\end{proof}
%%%%%%%

Since we eventually want $u_{t,\lambda}$ to have a small $H^s$ norm, we will henceforth assume that $\delta$ satisfies
\beq
\label{eq:bounds_delta}
k\lambda^{2-s} \ll \delta^s \ll k^{-s^2/(1-s)}\lambda^s,
\eeq
where the upper-bound assumption (which is more restrictive than the natural $\delta\ll \lambda$) will be needed later.
In particular, note that these assumptions put some restrictions on the possible choices of $\lambda =  e^{-\alpha}/k$, in addition to \eqref{eq:bounds_alpha}.
We will give concrete choices of $\alpha$ and $\delta$ that satisfy these bounds in the end of the proof in Section~\ref{sec:Step_IV_2D}.

%%%%%%%%%%%%%%%%%%
The following lemma states that the amount $\Theta$  "misses" the target $\tTheta$ because of the mollification is small:
\begin{lemma}
\label{lem:Theta_error}
$\supp(\theta(x,y) - x)$ is a subset of a $\delta$-thickening in the $x$ direction of $\supp(\tzeta_1)$, that is
\beq
\label{eq:theta_supp}
\supp(\theta(x,y) - x) \subset \BRK{(x,y)\, :\, \exists (x',y)\in \supp(\tzeta_1), \,\, |x-x'| < \delta}.
\eeq
In particular, for small enough $\delta$, $\supp(\theta(x,y) - x)\subset (0,1)^2$.
Moreover,
\beq
\label{eq:theta_error}
|\theta(x,y) - (x+ \tzeta_1(x,y))| \le 3\frac{\delta}{\lambda}
\eeq
\end{lemma}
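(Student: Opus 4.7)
My plan is to establish both assertions by analyzing $\theta$ directly from the convolution formula \eqref{eq:u_delta_def} together with the geometric properties of $g$ from Lemma~\ref{lem:properties_g}.

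For the support statement \eqref{eq:theta_supp}, suppose $(x_0, y_0)$ lies outside the $\delta$-thickening of $\supp\tzeta_1$ in the $x$-direction, so that $\tzeta_1(x', y_0) = 0$ for every $x' \in [x_0 - \delta, x_0 + \delta]$. Then $\tau(\cdot, y_0) = \id$ on that interval by \eqref{eq:tau_g_def}, and since $g = \tau^{-1}$ is its monotone inverse (Lemma~\ref{lem:properties_g}), $g(t, y_0) = t$ for $t$ in the same interval. I would then case-split on $t$ to show that the integration interval $[x_0 - g(t, y_0), x_0 - t]$ in \eqref{eq:u_delta_def} is always disjoint from $\supp\eta_\delta = [-\delta, \delta]$: for $t \in [x_0 - \delta, x_0 + \delta]$ it degenerates to a point; for $t > x_0 + \delta$ both endpoints lie below $-\delta$; and for $t < x_0 - \delta$, monotonicity of $g$ forces both endpoints to exceed $\delta$. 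Hence $u_\delta(\cdot, x_0, y_0) \equiv 0$, and Lipschitz uniqueness for \eqref{eq:theta_def} (using the bound from Lemma~\ref{lem:bounds_u_delta}) yields $\theta(\cdot, x_0, y_0) \equiv x_0$. Containment in $(0,1)^2$ for small $\delta$ is automatic since $\supp\tzeta_1 \subset (0,1)^2$.

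For the error bound \eqref{eq:theta_error}, I would fix $(x,y)$ and pass to the comoving coordinate $p(t) := \theta(t, x, y) - t$. A short computation from \eqref{eq:u_delta_def} gives $u_\delta(t, t+p, y) = (1+\lambda)^{-1}[H_\delta(p) - H_\delta(p - \rho(t, y))]$, where $H_\delta(s) := \int_{-\infty}^s \eta_\delta$ and $\rho(t, y) := g(t, y) - t = \lambda\tzeta_1(t, y) + O(\lambda^2)$ by Lemma~\ref{lem:properties_g}. Then $p(0) = x$, $p$ is strictly decreasing (with $p'(t) = u_\delta - 1 \in [-1, -\lambda/(1+\lambda)]$), and its motion partitions $[0, 1]$ into five successive phases: two ``off'' phases (i) $p > \rho + \delta$ and (v) $p < -\delta$ where $u_\delta = 0$ and $p' = -1$; a ``plateau'' (iii) $\delta < p < \rho - \delta$ where $u_\delta = (1+\lambda)^{-1}$; and two ``transition'' zones (ii) $|p - \rho| < \delta$ and (iv) $|p| < \delta$, each of $p$-width $2\delta$ and traversed at rate between $-1$ and $-\lambda/(1+\lambda)$. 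The unregularized $\ttheta$ admits the analogous three-phase decomposition with sharp cutoffs at $p = \rho$ and $p = 0$, giving $\tilde p(1) = x + \tzeta_1(x, y) - 1$. Equating total duration to $1$ in both cases and subtracting, $p(1) - \tilde p(1)$ reduces to the signed discrepancy between the transition-zone durations (each in $[2\delta, 2\delta(1+\lambda)/\lambda]$) and the matching plateau shortening of $2\delta(1+\lambda)/\lambda$ at each end. A direct arithmetic check — cleanest in the constant-$\tzeta_1$ model case, where the problem becomes an autonomous ODE in $p$ — gives $|p(1) - \tilde p(1)| \le 2\delta/\lambda$ to leading order.

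The main technical obstacle is the non-autonomy of the equation for $p$: the transition thresholds $\rho(t, y) \pm \delta$ depend on $t$. I would dispose of this by noting that $|\partial_t \rho| \le \lambda|\partial_1 \tzeta_1| + O(\lambda^2) = O(\lambda)$ by \eqref{eq:tilde_zeta_1_bounds} and Lemma~\ref{lem:properties_g}, and that the on-phase of the trajectory spans at most $O(\tzeta_1(x, y)) = O(1)$ in $t$, so $\rho$ varies by at most $O(\lambda)$ along the trajectory. This variation perturbs the phase-duration bounds only by subleading corrections which are absorbed into the slack between the leading $2\delta/\lambda$ and the stated $3\delta/\lambda$ once $\delta/\lambda$ is small enough (as guaranteed by \eqref{eq:bounds_delta}). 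The degenerate case $\rho(x, y) \le 2\delta$, in which the plateau is absent and the two transitions merge, is handled separately: the entire on-phase then has $p$-width $O(\delta)$ and contributes $O(\delta) \le \delta/\lambda$ to the displacement.
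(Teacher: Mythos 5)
Your argument for the support statement \eqref{eq:theta_supp} is correct and fills in the details the paper compresses into ``immediate,'' and the comoving-coordinate decomposition $p(t) = \theta(t,x,y) - t$ with the five-phase analysis is a genuinely different route from the paper's bracketing strategy (the paper sandwiches $u_\delta$ between explicit comparison fields $u_\delta^{\pm}$, $v_\delta^-$ whose flows it can integrate in closed form, whereas you track phase durations directly). However, there is a gap in the quantitative step: the claim that the non-autonomy of $\rho(t,y) = g(t,y) - t$ contributes only ``subleading corrections absorbed into the slack between $2\delta/\lambda$ and $3\delta/\lambda$'' is not correct. The issue is that the regularized plateau begins at a time $t_a'$ shifted from the unregularized plateau start $\tilde t_a$ by the transition-zone duration $T_2 \approx 2\delta/\lambda$, so $|\rho(t_a') - \rho(\tilde t_a)| \lesssim \|\pl_t\rho\|_\infty \cdot |t_a' - \tilde t_a| \lesssim \lambda \cdot \delta/\lambda = \delta$; multiplying by the plateau rate factor $(1+\lambda)/\lambda$ gives an $O(\delta/\lambda)$ contribution to the plateau-duration mismatch — the \emph{same} order as your leading $2\delta/\lambda$ term, with a prefactor controlled by $\|\pl_1\tzeta_1\|_\infty$, which depends only on $\zeta$ but has no reason to be small. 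So the argument as written yields a bound of the form $C(\zeta)\,\delta/\lambda$ rather than $3\delta/\lambda$; this would still suffice for the downstream uses in Proposition~\ref{pn:Theta} and Corollary~\ref{cor:Gamma}, which only need $\lesssim \delta/\lambda$, but it does not establish the lemma's stated constant. The paper's choice of the comparison field $v_\delta^-$ — whose support is $\ind_{\max\{\tau(x)+3\delta,\,x\}<t'<x}$, so that $\tau(x) + 3\delta = x - \lambda(\tzeta_1(x) - 3\delta/\lambda)$ lets the flow be integrated explicitly — is precisely what sidesteps this threshold-drift issue and pins down the constant. One secondary remark: your dichotomy also needs, and tacitly uses, that $p - \rho$ is strictly decreasing so the five phases are traversed once in order; this holds because $\pl_t(p-\rho) \le -\lambda/(1+\lambda) - \pl_t\rho < -C^{-1}\lambda + O(\lambda^2) < 0$ for small $\lambda$ by the lower bound $\pl_1\tzeta_1 > -1 + C^{-1}$ in \eqref{eq:tilde_zeta_1_bounds}, and this monotonicity should be stated explicitly.
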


\begin{proof}
Throughout this proof $y$ is fixed and does not play a role, and we will omit it for notational brevity.
Conclusion \eqref{eq:theta_supp} follows immediately from the definition of $\theta$. 
We now prove \eqref{eq:theta_error}.
Define
\begin{align*}
u_\delta^- &=  (1+\lambda)^{-1} \ind_{\BRK{u_\delta = (1+\lambda)^{-1}}} = (1+\lambda)^{-1} \ind_{t+\delta < x < g(t)-\delta}\ ,
\\ 
u_\delta^+ &= (1+\lambda)^{-1} \frac{1}{2}\brk{\ind_{\supp u} + \ind_{\supp u_\delta}} = (1+\lambda)^{-1} \brk{\ind_{t < x < g(t)} + \frac{1}{2} \ind_{\supp u_\delta \setminus \supp u}} 
\end{align*}
and let $\theta^\pm(t,x)$ solve
\[
\frac \pl{\pl t}\theta^\pm(t,x) = u_\delta^\pm(t,\theta^\pm(t,x)), \qquad \theta^\pm(0,x)=x.
\]
and let $\theta^\pm(x) := \theta^\pm (1, x)$.

It is clear that
\[
u_\delta^- \le u_\delta \le u_\delta^+
\]
pointwise. It follows that
$\theta^-(t,x)\le \theta(t,x)\le \theta^+(t,x)$
for all $t\ge 0$ and all $x$, and in particular $\theta^-(x)\le \theta(x)\le \theta^+(x)$.
See Figure~\ref{fig:u_delta_plus}.

\begin{figure}
\begin{center}
\includegraphics[height=3.5in]{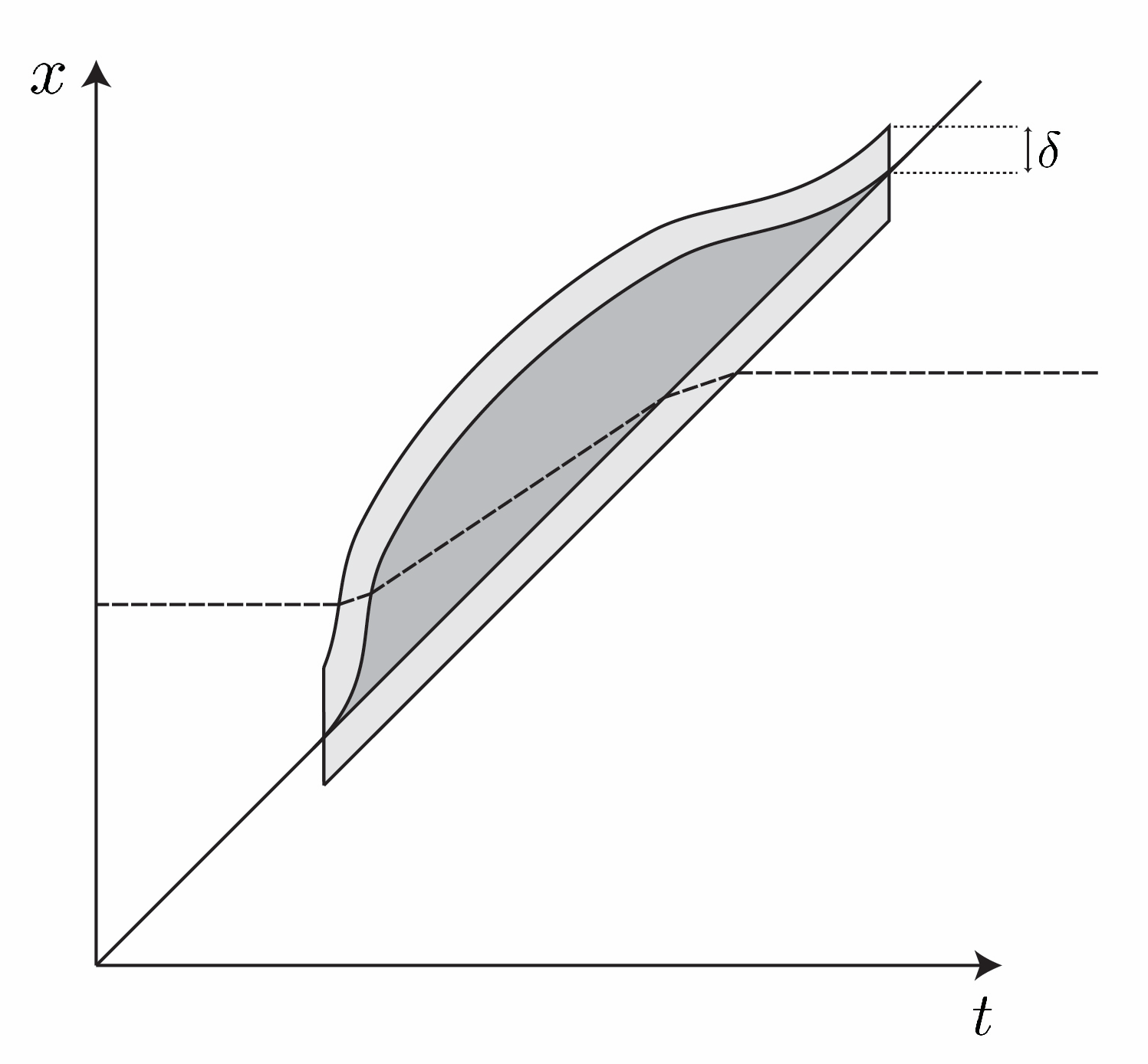}
\end{center}
\caption{A sketch of the flow $\theta^+$ along $u_\delta^+$.
The dark grey area is $\supp u$, where $u_\delta^+ = (1+\lambda)^{-1}$.
The light grey area is $\supp u_\delta\setminus \supp u$, which is at most of width $\delta$; in this region $u_\delta^+ = \frac{1}{2}(1+\lambda)^{-1}$. } 
\label{fig:u_delta_plus}
\end{figure}

First consider $\theta^+(t,x)$.Note that
$\theta^+(t,x) = x$ for $t\le t_1$, where $t_1$ is the first time such that $(t_1,x) \in \supp u_\delta$. 
Since $\supp \eta \subset [-\delta,\delta]$ we have
\[
t_1 \ge \tau(x-\delta).
\]
Since $\pl_1 \tau = 1 + O(\lambda)$ (see \eqref{eq:tilde_zeta_1_bounds}--\eqref{eq:tau_g_def}), it follows that $t_1 \ge \tau(x) - 2\delta$.
From $t_1$, until time $t_2$ defined by 
\[
g(t_2) = \theta^+(t_2,x),
\]
i.e.~the first time such that $(t_2,\theta^+(t_2,x))\in \supp u$, we have $\theta^+(t,x) < x + \frac{1}{2}(t-t_1)$ (note that for certain values of $x$, $(t,\theta^+(t,x))\notin \supp u$ for any $t$. In this case the analysis is simpler).
Using this inequality, \eqref{eq:bounds_dg} and the bound on $t_1$, it follows that $t_2 - t_1 \le 5\delta$.
Indeed, 
\[
x+\frac 12(t_2-t_1)>\theta^+(t_2,x)= g(t_2) > g(\tau(x)) +  (1-C\lambda)(t_2-\tau(x))
\]
and since $g(\tau(x)) = x$, we see that $\frac 12(t_2-t_1)> (1-C\lambda)(t_2-t_1 - 2\delta)$, from which the claim follows. 
Therefore $\theta^+(t_2,x) < x + 3\delta$.
Until the time $t_3$ when $\theta^+(t,x)$ leaves $\supp u$, $\theta^+$ flows according to the flow of $u$ with initial condition $\theta^+(t_2,x)$.
Therefore,
\[
\theta^+(t_3,x) = \theta^+(t_2,x) + \tzeta_1(\theta^+(t_2,x)) < x + \tzeta_1(x)+ C\delta,
\]
where we used \eqref{eq:tilde_zeta_1_bounds} again.
By the same arguments as for the time interval $[t_1,t_2]$, it follows that for $t>t_3$, $\theta^+(t,x)$ increases by less than $\delta$.
Therefore we obtain the upper bound
\beq
\label{eq:theta_error_upper_bound}
\theta(x) \le \theta^+(x) < x + \tzeta_1(x)+ C\delta,
\eeq
for an appropriate constant $C$.

We now consider $u_\delta^-$ and $\theta^-(t,x)$. 
Note that
\[
u_\delta^-(t,x) =  (1+\lambda)^{-1}\ind_{\tau(x+\delta)<t<x-\delta} > (1+\lambda)^{-1}\ind_{\tau(x) + 2\delta<t<x-\delta},
\]
where we used $\pl_1 \tau = 1 + O(\lambda)$ in the inequality.
Defining $t' = t +\delta$, we have 
\beq
\label{eq:u_delta_v_delta}
u_\delta^-(t',x) \ge v_\delta^-(t',x) := (1+\lambda)^{-1}\ind_{\max\BRK{\tau(x) + 3\delta,x}<t'<x}.
\eeq
By definition \eqref{eq:tau_g_def} of $\tau$
\[
\tau(x) + 3\delta = x - \lambda\tzeta_1(x) + 3\delta = x - \lambda\brk{\tzeta_1(x) - 3\frac{\delta}{\lambda}}.
\]
It follows that the flow by $v_\delta^-(t,x)$, that is the solution $\bar{\theta}^-$ of 
\[
\frac{\pl}{\pl t} \bar\theta^-(t,x) = v_\delta^-(t,\bar\theta^-(t,x))
\qquad \bar\theta^-(0,x)=x,
\]
satisfies
\[
\bar\theta^-(1,x) = \max\BRK{x+\tzeta_1(x) - 3\frac{\delta}{\lambda}, x}.
\]
Moreover, for $\delta$ small enough (depending only on $\zeta$), $\bar\theta^-(1-\delta,x) = \bar\theta^-(1,x)$.
By \eqref{eq:u_delta_v_delta}, it follows that
\beq
\label{eq:theta_error_lower_bound}
\theta(x) \ge \theta^-(1,x) \ge \bar\theta^-(1-\delta,x) \ge x+\tzeta_1(x) - 3\frac{\delta}{\lambda}.
\eeq
\eqref{eq:theta_error_upper_bound} and \eqref{eq:theta_error_lower_bound} imply \eqref{eq:theta_error}.
\end{proof}
%%%%%%%%%%%%%%%%%%%

Next, we prove bounds on the derivatives of $\theta$.
\begin{lemma}
\label{lem:theta_x_bounds}
There exists $C\ge 1$, depending only on $\zeta$, such that
\beq
\label{eq:bound_pl_x_theta}
C^{-1} \le \pl_x \theta \le C \qquad 
\text{for all $(x,y)$.}
\eeq
\end{lemma}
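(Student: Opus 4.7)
The plan is to use the first-variation equation for the flow \eqref{eq:theta_def}. Differentiating in $x$ (with $y$ fixed) gives the linear ODE $\partial_t(\partial_x\theta) = \partial_1 u_\delta(t,\theta,y)\,\partial_x\theta$, hence
\[
\partial_x\theta(1,x,y) = \exp\!\Brk{\int_0^1 \partial_1 u_\delta(s,\theta(s,x,y),y)\,ds}.
\]
The task reduces to bounding this integral in absolute value by a constant depending only on $\zeta$ (uniformly in $\lambda$ and $\delta$). Direct differentiation of \eqref{eq:u_delta_def} yields
\[
\partial_1 u_\delta(t,x,y) = \frac{1}{1+\lambda}\Brk{\eta_\delta(x-t) - \eta_\delta(x-g(t,y))},
\]
so along the trajectory the integrand splits as $(I_1-I_2)/(1+\lambda)$ with $I_i=\int_0^1 \eta_\delta(\phi_i(t))\,dt$, where $\phi_1(t):=\theta(t,x,y)-t$ and $\phi_2(t):=\theta(t,x,y)-g(t,y)$.

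The key observation is that both $\phi_1$ and $\phi_2$ are strictly monotone decreasing, with $|\phi_i'|\gtrsim \lambda$. For $\phi_1$ this follows from $\phi_1'=u_\delta-1\le -\lambda/(1+\lambda)$, using $u_\delta\le (1+\lambda)^{-1}$. For $\phi_2$, Lemma~\ref{lem:properties_g} and the bound $\partial_1\tzeta_1>-1+C^{-1}$ from \eqref{eq:tilde_zeta_1_bounds} give $\partial_1 g>1-(1-C^{-1})\lambda+O(\lambda^2)$, hence $\phi_2'=u_\delta-\partial_1 g\le -c\lambda$ for some $c=c(\zeta)>0$. This monotonicity allows the change of variables $s=\phi_i(t)$ in each integral.

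In the generic regime $\lambda\tzeta_1(x,y)\gg \delta$, the convolution formula \eqref{eq:u_delta_def} gives the explicit expressions $u_\delta(t,t+s,y)\approx H(s)/(1+\lambda)$ and $u_\delta(t,g(t,y)+\tilde s,y)\approx (1-H(\tilde s))/(1+\lambda)$, where $H(s):=\int_{-\infty}^s\eta_\delta$, valid on the respective trajectory curves. Substituting (and setting $\mu:=\lambda\partial_1\tzeta_1+O(\lambda^2)=O(\lambda)$ evaluated at the time when $\phi_2$ crosses the support of $\eta_\delta$) yields
\[
I_1 \approx (1+\lambda)\log\frac{1+\lambda}{\lambda},\qquad I_2 \approx (1+\lambda)\log\frac{1+\lambda+\mu}{\lambda+\mu}.
\]
Each is logarithmically divergent as $\lambda\to 0$, but their difference is bounded:
\[
\frac{I_1-I_2}{1+\lambda} = \log\frac{(1+\lambda)(\lambda+\mu)}{\lambda(1+\lambda+\mu)} = \log(1+\partial_1\tzeta_1) + O(\lambda),
\]
which lies in a bounded interval by \eqref{eq:tilde_zeta_1_bounds}. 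Exponentiating gives $\partial_x\theta\approx 1+\partial_1\tzeta_1$, bounded between $C^{-1}$ and $C$ for appropriate $C=C(\zeta)$.

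The main obstacle is rigorously tracking the $O(1)$ error terms, since the desired bound emerges as the difference of two $\log(1/\lambda)$-sized quantities, and any loss in the cancellation produces an unbounded exponent. A secondary issue is the degenerate case $\lambda\tzeta_1(x,y)\lesssim \delta$, in which the trajectory only grazes (or misses) $\supp u$ and the asymptotic formulas for $u_\delta$ above break down; however in that regime $I_1$ and $I_2$ are individually $O(1)$ (both $|\phi_i|<\delta$ for only $O(\delta/\lambda+1)$ time, with $\|\eta_\delta\|_\infty\lesssim 1/\delta$), and the bound is easier. The case $\tzeta_1\equiv 0$ near $(x,y)$ is trivial since the trajectory stays put and $\partial_x\theta\equiv 1$.
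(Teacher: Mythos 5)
Your approach is genuinely different from the paper's. You work in the Lagrangian picture, write $\partial_x\theta(1) = \exp\int_0^1 \partial_1 u_\delta(t,\theta(t),y)\,dt$, split $\partial_1 u_\delta = \frac{1}{1+\lambda}\bigl[\eta_\delta(x-t)-\eta_\delta(x-g(t))\bigr]$, and try to show that the two resulting integrals $I_1, I_2$, each of size $\approx\log(1/\lambda)$, cancel up to $O(1)$. The paper instead passes to the Eulerian flow map $w=\theta(t,\cdot)^{-1}$, sets $q = \partial_t w + \partial_x w$, and observes that $q$ satisfies $\frac{d}{dt}q(t,\theta(t,x)) = -\frac{(\partial_t+\partial_x)u_\delta}{1-u_\delta}\,q$, where the combination $(\partial_t+\partial_x)u_\delta = \frac{g'(t)-1}{1+\lambda}\eta_\delta(x-g(t))$ contains only \emph{one} singular factor $\eta_\delta$ with a prefactor $g'-1 = O(\lambda)$ (the $\eta_\delta(x-t)$ term is annihilated because $\partial_t+\partial_x$ kills functions of $x-t$). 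A single change of variables then bounds the Gr\"onwall integral by $C$ directly, with no cancellation required.

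This choice of $q$ is exactly what makes the lemma go through cleanly, and it is where your proposal has a genuine gap. Your whole argument rests on the cancellation $I_1-I_2 = O(1)$ between two quantities each of order $\log(1/\lambda)$, and as you yourself flag, you have not controlled the error terms in the two asymptotic formulas for $u_\delta$ along the trajectory to the required $O(1)$ precision. Worse, the case split is not the right one. You treat the ``degenerate'' regime as $\lambda\tzeta_1(x,y)\lesssim\delta$, but the formulas $u_\delta\approx H(s)/(1+\lambda)$ and $u_\delta\approx(1-H(\tilde s))/(1+\lambda)$ on the two crossings require those crossings to be temporally disjoint, and whether they are depends on the size of $\tzeta_1(t,y)$ at the times $t$ of the crossings. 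The $\phi_2$ crossing occurs near $t\approx\tau(x)$ (where $\tzeta_1(t)\approx\tzeta_1(x)$), but the $\phi_1$ crossing occurs near $t\approx x+\tzeta_1(x,y)$, and nothing forces $\tzeta_1$ to be large there: it is perfectly possible that $\tzeta_1(x,y)\gg\delta/\lambda$ yet $\tzeta_1(x+\tzeta_1(x,y),y)=0$. In that mixed regime the $\phi_1$ crossing overlaps with the tail of the $\phi_2$ crossing, neither of your asymptotic formulas is valid, and the argument for $I_1-I_2 = O(1)$ breaks; the ``degenerate'' argument $|\phi_i|<\delta$ for time $O(\delta/\lambda)$ does not apply either, since at the \emph{entry} end the trajectory really does enter deep into $\supp u$ and $|\phi_1'|$ can drop to $\approx\lambda$. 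To repair this you would effectively need to re-split the integral according to the geometry of the trajectory rather than a single pointwise condition at $(x,y)$, carefully pairing off the contributions — which is precisely the bookkeeping the paper's $\partial_t+\partial_x$ trick is engineered to avoid.

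Your heuristic conclusion $\partial_x\theta\approx 1+\partial_1\tzeta_1$ is correct and matches the paper's implicit picture, so the intuition is sound; but as written the proposal does not constitute a proof, because the uniform-in-$(\lambda,\delta)$ bound is exactly what fails if the cancellation loses even a $\log(1/\lambda)$ factor, and the mixed regime has not been ruled out as a source of such a loss.
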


\begin{proof}
As in the proof of Lemma \ref{lem:Theta_error}, we will omit $y$ for notational brevity, and because it does not play any role.
Recall that $\pl_t \theta(t,x) = u_\delta(t,\theta)$, and consider the Eulerian version of this flow, that is the equation
\begin{equation}
\partial_t w(t,x) + u_\delta(t,x) \partial_xw(t,x) = 0
\label{eq:transport}\end{equation}
with initial data
\begin{equation}\label{eq:wzero}
w(0,x)= x.
\end{equation}
If $w$ is a solution then
\[
\frac d{dt}w(t,\theta(t,x)) = \partial_x w(t,\theta)\partial_t\theta
+\partial_t w(t,\theta)
= 0,
\]
using the ODE for $\theta$ and the PDE for $w$.
The initial data then imply that 
$w(t, \theta(t,x)) = x$ for all $t$,
and hence that 
\[
w(t,\cdot) = \theta(t,\cdot)^{-1}.
\]

Next, define
\[
q = \partial_t w+\pl_x w.
\]
Since $u_\delta(t,x) = 0$ when $t$ is close to $0$ or $1$, we have that $\partial_t w=0$ for such values of $t$. 
In particular, $q(0,\cdot) = 1$ and $q(1,\cdot)=\pl_x w(1,\cdot)=\pl_x \theta(1,\cdot)^{-1}$, which is the quantity we need to estimate.

We use $q$ and not $\pl_x w$ directly since it will allow us to exploit the fact,
reflected in the smallness of $(\partial_t+\partial_x)u_\delta$, that
the coefficients in \eqref{eq:transport} are nearly translation-invariant in the
$\partial_t+\partial_x$ direction. We compute
\begin{align*}
\pl_t q = \partial_t(\partial_t w+\pl_x w) =  (\partial_t+\partial_x)\partial_t w
&=
-(\partial_t+\partial_x)(u_\delta \pl_x w)=
-u_\delta \pl_x q - (\pl_tu_\delta+\pl_x u_\delta)\pl_x w.
\end{align*}
We further deduce from \eqref{eq:transport} that
\[
\pl_x w = q+ u_\delta \pl_x w,\qquad\mbox{ and thus }\qquad \pl_x w = \frac q{1-u_\delta},
\]
so we can rewrite the above equation as
\[
\pl_t q = -u_\delta \pl_x q - \frac{\pl_t u_\delta+\pl_x u_\delta}{1-u_\delta} q.
\]
It follows that
\begin{equation}\label{eq:growth}
\frac d{dt} q(t,\theta(t,x)) =- \frac{\pl_t u_\delta+\pl_x u_\delta}{1-u_\delta}\big(t,\theta(t,x)\big) \  q(t,\theta(t,x)).
\end{equation}
Therefore, if we obtain a bound
\beq
\label{eq:gronwall_estimate}
\int_0^1 \Abs{\frac{\pl_t u_\delta+\pl_x u_\delta}{1-u_\delta}\big(t,\theta(t,x)\big) }\,dt < C,
\eeq
for some $C$ independent of $x$ (and $y$), we obtain \eqref{eq:bound_pl_x_theta} by Gronwall's inequality.

From definition \eqref{eq:u_delta_def} of $u_\delta$,
we have
\begin{align}
\label{eq:u_t_and_u_x}
\partial_x u_\delta(t,x) 
&= \frac 1{1+\lambda} \Brk{\eta_\delta(x-t) - \eta_\delta(x-g(t))},
\\
\partial_t u_\delta(t,x) 
&= \frac 1{1+\lambda} \Brk{ -\eta_\delta(x-t) + g'(t) \eta_\delta(x-g(t))},
\end{align}
and therefore, using \eqref{eq:bounds_dg}, we have
\begin{equation}\label{eq:u_t_plus_u_x}
\begin{split}
\Abs{\partial_t u_\delta +\partial_x u_\delta }
&=  \frac 1{1+\lambda} \eta_\delta(x-g(t))\ \Abs{g'(t)-1}\\
&\le   \frac {C\lambda}{1+\lambda} \eta_\delta(x-g(t)).
\end{split}
\end{equation}

Because of \eqref{eq:growth} and \eqref{eq:u_t_plus_u_x}, we want to estimate $\frac{\eta_\delta(x-g(t))}{1-u_\delta(t,x)}$.
We have 
\begin{align*}
1 - u_\delta(t,x) 
&=1 -  \frac 1{1+\lambda}\int_{x-g(t)}^{x-t} \eta_\delta(x')dx'
\\
&\ge
1 -  \frac 1{1+\lambda}\int_{x-g(t)}^{\infty} \eta_\delta(x')dx'\\
&=
1 - \frac 1{1+\lambda} \mu_\delta (x-g(t)),
\qquad 
\mbox{ for } \ \ \mu_\delta(x) := \int_x^{\infty} \eta_\delta(x') dx' ,
\end{align*}
and therefore
\[ 
\frac{\eta_\delta(x-g(t))}{1-u_\delta(t,x)} 
\le 
\frac{(1+\lambda) \eta_\delta(x-g(t))}
{1+ \lambda - \mu_\delta(x-g(t))} 
=
-\frac{(1+\lambda) \mu_\delta'(x-g(t))}
{1+ \lambda - \mu_\delta(x-g(t))} .
\] 
It follows that 
\beq\label{eq:dtud1}
\int_0^1 \Abs{\frac{\pl_t u_\delta+\pl_x u_\delta}{1-u_\delta}\big(t,\theta(t,x)\big) }\,dt 
\le 
C\lambda
 \int_0^1
\frac{- \mu_\delta'(\theta(t,x)-g(t))}
{1+\lambda - \mu_\delta(\theta(t,x)-g(t))} 
 \, dt.
\eeq
For the following computation, $x$ is fixed. 
We wish to rewrite the integral in terms of the variable
\[
\alpha = \alpha(t) = \mu_\delta(\theta(t,x)-g(t)),
\]
which increases from $0$ to $1$ as $t$ goes from $0$ to $1$ for
$\delta, \lambda$ sufficiently small.  
To estimate $\alpha'(t)$, note that by the definition of $\theta$, we  have
\begin{align*}
\partial_t \theta (t,x)= u_\delta(t,\theta(t,x)) 
=
 \frac 1{1+\lambda}\int_{\theta(t,x)-g(t)}^{\theta(t,x)-t} \eta_\delta(x')dx'
&\le \frac{1}{1+\lambda} \int_{\theta(t,x)-g(t)}^\infty \eta_\delta(x')\,dx' \\
&=\frac{\alpha(t)}{1+\lambda}.
\end{align*}

Since $g' \ge 1-c\lambda$ for some $c<1$, depending only on $\zeta$, it follows that
\begin{align*}
\partial_t(\theta(t,x) - g(t))
&\le 
\frac{\alpha(t)}{1+\lambda} - 1 + c\lambda = \frac {  \alpha(t)   -(1+\lambda)(1-c\lambda)
}
{1+\lambda} .
\end{align*}
This is always negative for small enough $\lambda$, as $0\le\alpha\le 1$ and $c<1$.
Thus
\[
-\mu_\delta'(\theta(t,x)-g(t)) = \frac{\alpha'(t)}{ - \partial_t(\theta(x,t)-g(t))}
  \le \frac{(1+\lambda)\alpha'(t)}{ (1+\lambda)(1-c\lambda) - \alpha(t)}.
\]
So we can change variables in \eqref{eq:dtud1}
to find that
\begin{align*}
\int_0^1 \Abs{\frac{\pl_t u_\delta+\pl_x u_\delta}{1-u_\delta}\big(t,\theta(t,x)\big) }\,dt 
&\le
C\lambda \int_0^1 \frac 1{1+ \lambda-\alpha} \ \frac{1 }{(1+\lambda)(1-c\lambda) -\alpha}\  d\alpha.
\\
\end{align*}
For $\lambda < \frac {1-c}{2c}$, the integrand on the right is bounded by 
$(1+\frac 12(1-c)\lambda -\alpha)^{-2}$, so we integrate to conclude that
\[
\int_0^1 \Abs{\frac{\pl_t u_\delta+\pl_x u_\delta}{1-u_\delta}\big(t,\theta(t,x)\big) }\,dt 
\le
C\lambda\brk{\frac 12(1-c)\lambda}^{-1} \le C.
\]
We thus obtain \eqref{eq:gronwall_estimate}, which completes the proof.
\end{proof}

%%%%%%%%%%%%%%%%%%%%%
\begin{lemma}
\label{lem:theta_y_bounds}
For every $\lambda>0$ small enough, there exists a choice of mollifier $\eta_\delta$ in definition
\eqref{eq:u_delta_def} such that
\beq
\label{eq:bound_pl_y_theta}
|\pl_y \theta| \le C\lambda^{-1} \qquad 
\text{for all $(x,y)$},
\eeq
where $C>0$ depends only on $\zeta$.
\end{lemma}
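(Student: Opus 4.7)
The plan is to differentiate the defining ODE \eqref{eq:theta_def} with respect to $y$. Setting $w := \pl_y \theta$, we obtain the linear inhomogeneous equation
\[
\pl_t w = \pl_x u_\delta(t,\theta(t,x,y),y)\, w \;+\; \pl_y u_\delta(t,\theta(t,x,y),y), \qquad w(0,x,y) = 0.
\]
Differentiating \eqref{eq:theta_def} in $x$ instead shows that the integrating factor for this equation is precisely $\pl_x \theta$. By Lemma~\ref{lem:theta_x_bounds}, $\pl_x\theta$ is bounded above and below by positive constants. Variation of parameters therefore yields
\[
|\pl_y\theta(1,x,y)| \;\lesssim\; \int_0^1 \bigl|\pl_y u_\delta(t,\theta(t,x,y),y)\bigr|\,dt,
\]
and the task reduces to estimating this integral.

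A direct computation from \eqref{eq:u_delta_def} gives $\pl_y u_\delta(t,x,y) = (1+\lambda)^{-1}\pl_2 g(t,y)\,\eta_\delta(x-g(t,y))$. Since Lemma~\ref{lem:properties_g} provides $|\pl_2 g|\le C$, the problem further reduces to showing
\[
\int_0^1 \eta_\delta\bigl(\theta(t,x,y) - g(t,y)\bigr)\,dt \;\lesssim\; \frac{1}{\lambda}.
\]
For this we reuse the change of variable at the heart of the proof of Lemma~\ref{lem:theta_x_bounds}: with $A(t) := \mu_\delta(\theta(t,x,y) - g(t,y))$, we have $A'(t) = -\eta_\delta(\theta-g)\,\pl_t(\theta-g)$, and the analysis there established (for $y$ fixed) that $\pl_t(\theta-g) \le -\frac{(1+\lambda)(1-c\lambda) - A}{1+\lambda}$ for some $c<1$ depending only on $\zeta$. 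In particular $\pl_t(\theta-g)<0$, so $A$ is strictly increasing and
\[
\int_0^1 \eta_\delta(\theta-g)\,dt \;=\; \int_{A(0)}^{A(1)}\!\frac{dA}{|\pl_t(\theta-g)|} \;\le\; \int_0^1\!\frac{1+\lambda}{(1+\lambda)(1-c\lambda) - A}\,dA \;=\; O\bigl(\log(1/\lambda)\bigr),
\]
since the right-hand denominator is bounded below by $(1-c)\lambda + O(\lambda^2)$. This is in particular $\le C/\lambda$ for small $\lambda$, giving \eqref{eq:bound_pl_y_theta}.

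The main technical obstacle is justifying the change of variables on the support of the integrand: $A(t)$ must be strictly monotonic wherever $\theta(t,x,y) - g(t,y) \in (-\delta,\delta)$, since outside this range the integrand vanishes. This is where the flexibility in the choice of mollifier $\eta_\delta$ is used: taking, for instance, a smooth even bump that is strictly positive on $(-\delta,\delta)$ and strictly monotone on each side of the origin makes $\mu_\delta$ strictly decreasing there. Combined with the sign information $\pl_t(\theta-g)<0$ from the preceding paragraph, this yields the needed monotonicity of $A$ and validates the change of variables uniformly in $(x,y)$.
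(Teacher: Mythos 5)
Your overall strategy — differentiate the ODE in $y$, use the variational equation with integrating factor $\partial_x\theta$, then change variables as in the proof of Lemma~\ref{lem:theta_x_bounds} — is quite different from the paper's, which instead compares the trajectories $\theta(t,x,y)$ and $\theta(t,x,y+h)$ via a time-shifted vector field and a Gr\"onwall estimate on an explicit interval $[t_0,t_2]$. Your route is in principle cleaner, but as written it has a genuine gap that makes your final bound too strong (indeed false).

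The gap is the step ``By Lemma~\ref{lem:theta_x_bounds}, $\partial_x\theta$ is bounded above and below by positive constants.'' That lemma controls $\partial_x\theta(1,x,y)$ only. Variation of parameters gives
\[
\partial_y\theta(1,x,y)\;=\;\int_0^1 \frac{\partial_x\theta(1,x,y)}{\partial_x\theta(\tau,x,y)}\,\partial_y u_\delta\bigl(\tau,\theta(\tau,x,y),y\bigr)\,d\tau,
\]
so you need $\partial_x\theta(\tau,\cdot,\cdot)$ bounded below for \emph{all} $\tau\in[0,1]$, not just $\tau=1$. But that fails: from the identities in the proof of Lemma~\ref{lem:theta_x_bounds}, $\partial_x\theta(\tau,x)=\bigl(1-u_\delta(\tau,\theta(\tau,x))\bigr)/q(\tau,\theta(\tau,x))$ with $q$ bounded but $1-u_\delta$ as small as $\lambda/(1+\lambda)$. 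So the integrating factor $\partial_x\theta(1)/\partial_x\theta(\tau)$ can be of size $\lambda^{-1}$, precisely at times $\tau$ when $\theta(\tau)-g(\tau)\in(-\delta,\delta)$ (i.e.\ exactly where $\partial_y u_\delta\ne 0$) and $u_\delta(\tau,\theta(\tau))$ is already close to $(1+\lambda)^{-1}$. This is not a removable artifact: the unregularized flow $\ttheta$ satisfies $\partial_x\ttheta(\tau,x)=\frac{\lambda(1+\partial_x\tzeta_1)}{1+\lambda}\sim\lambda$ in the middle phase, so compression by a factor $\sim\lambda$ genuinely occurs during the transport.

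A symptom of the gap is that your argument would yield $|\partial_y\theta|\lesssim\log(1/\lambda)$. This is incompatible with Lemma~\ref{lem:Theta_error}: since $\theta(1,x,y)=x+\tzeta_1(x,y)+O(\delta/\lambda)$ and $|\partial_y\tzeta_1|\sim\lambda^{-1}$, the true size of $\partial_y\theta$ is of order $\lambda^{-1}$, not $\log(1/\lambda)$.

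Your approach can be repaired by keeping the $(1-u_\delta)^{-1}$ factor rather than discarding it: writing $\partial_x\theta(1)/\partial_x\theta(\tau)\le C/\bigl(1-u_\delta(\tau,\theta(\tau))\bigr)\le C(1+\lambda)/(1+\lambda-\alpha)$ with $\alpha=\mu_\delta(\theta-g)$, the same change of variables gives
\[
|\partial_y\theta(1,x,y)|\;\lesssim\;\int_0^1\frac{d\alpha}{(1+\lambda-\alpha)\bigl((1+\lambda)(1-c\lambda)-\alpha\bigr)}\;\lesssim\;\int_0^1\frac{d\alpha}{\bigl(c'\lambda+(1-\alpha)\bigr)^2}\;\lesssim\;\frac1\lambda,
\]
which is the correct bound and completes the proof along your lines. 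So the idea is sound, but as written the crucial uniform-in-$t$ lower bound on $\partial_x\theta$ is unjustified, and without the correction the conclusion overshoots.
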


\begin{proof}
Fix $h\in \R$, $|h|\ll\delta\lambda$, and consider $\theta(t,x,y)$ and $\theta(t,x,y+h)$.
By Lemma~\ref{lem:properties_g} we have that
\[
\Abs{\tau(t,y+h) - \tau (t,y)} < c|h|,
\]
for some $c>0$.
In particular, 
\[
\begin{split}
u(t-c|h|,x,y+h) &= (1+\lambda)^{-1} \ind_{\tau(x,y+h) < t - c|h| < x} \\
	&\le (1+\lambda)^{-1} \ind_{\tau(x,y) - c|h| < t -c|h|< x} = (1+\lambda)^{-1} \ind_{\tau(x,y) < t < x + c|h|} \\
	&= (1+\lambda)^{-1} \ind_{t-c|h|<x<g(t,y) } =: u^h(t,x,y).
\end{split}
\]
Therefore $u_\delta(t,x,y+h) \le u^h_\delta(t + c|h|,x,y)$, where $u^h_\delta$ is the mollification of $u^h$ as in \eqref{eq:u_delta_def}.
Define $\theta^h(t,x,y)$ by
\[
\frac \pl{\pl t}\theta^h(t,x,y) = u^h_\delta(t,\theta(t,x,y),y), \qquad \theta(0,x)=x.
\]
It follows that
\[
\theta(t - c|h|,x,y+h) \le \theta^h(t,x,y),
\]
and since for $h$ small enough (independent of $x$ and $y$), $\theta(1,x,y+h) = \theta(1-c|h|,x,y+h)$, we have
\[
\theta(1,x,y+h) \le \theta^h(1,x,y).
\]
We now compare $\theta^h(t,x,y)$ and $\theta(t,x,y)$ and show that
\beq
\label{eq:theta_h_minus_theta}
\theta(1,x,y+h) - \theta(1,x,y) \le \theta^h(1,x,y) - \theta(1,x,y) \lesssim \frac{|h|}{\lambda}.
\eeq
By symmetry it also follows that
\[
\theta(1,x,y) - \theta(1,x,y+h ) \lesssim \frac{|h|}{\lambda},
\]
which completes the proof.

It remains to prove the righthand side inequality in \eqref{eq:theta_h_minus_theta}.
In order to simplify notation, we will henceforth write $\theta(t) = \theta(t,x,y)$, $g(t) = g(t,y)$ and so on. 

For this, it is convenient to use a smooth mollifier $\eta_\delta$ with support in $[-\delta, \delta]$ such that
\[
0\le \eta_\delta(x) \le \frac {1+\lambda}{2\delta}.
\]
This is necessarily very close to the normalized characteristic function of the
interval $[-\delta, \delta]$ in $L^p$ for every $p<\infty$.
By the definition of $\theta^h(t)$, it follows (see Figure~\ref{fig:y_derivatives}) that 
\[
\theta^h(t) = \theta(t).
\]
for every $t<t_0$, where $t_0$ is defined by
\[
\theta(t_0) = t_0 + \delta.
\]

\begin{figure}
\begin{center}
\includegraphics[height=3.5in]{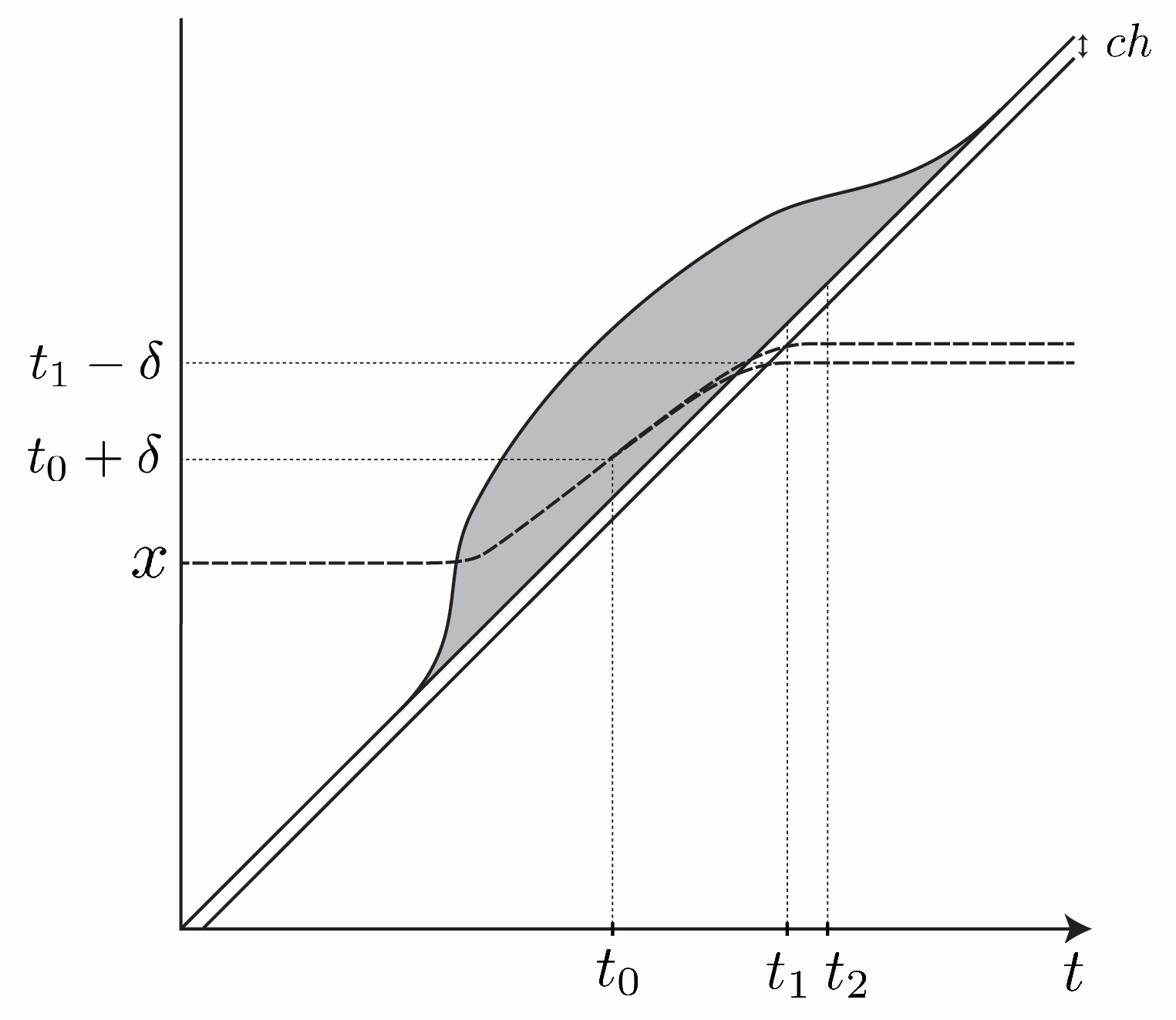}
\end{center}
\caption{A sketch of the trajectories of $\theta(t)$ (lower dashed line) and $\theta^h(t)$ (upper dashed line).
$\theta(t) = \theta^h(t)$ for $t\le t_0$.
$\theta(t)$ is constant after $t_1$ (where $\theta(t_1) = t_1-\delta$). 
$\theta^h(t)$ is constant after $t_2 = t_1 + \delta$, see \eqref{eq:theta_h_minus_theta_t_1}.} 
\label{fig:y_derivatives}
\end{figure}

When $\theta(t) -t \ge -\delta$, we have
\[
\frac d{dt}\theta = u_\delta(t,\theta) = \frac 1{1+\lambda}
\int_{\theta - g(t)}^{\theta-t}\eta(x')dx' \le 
\frac 1{1+\lambda} 
\int_{-\infty}^{\theta-t}\eta(x')dx'  
\le
\min \left\{ \frac {1}{2\delta}(\theta - t +\delta), \frac 1{1+\lambda} \right\},
\]
and when $\theta(t)-t\le -\delta$ we have $\frac{d\theta}{dt}=0$.
Let $\alpha(t) = \theta(t)-t$.
It follows that
\[
\frac {d\alpha}{dt} \le 
-\frac \lambda{1+\lambda}  
+
\min \left\{ \frac {1}{2\delta}( \alpha - \delta\frac{1-\lambda}{1+\lambda}), 0 \right\} \ \qquad\mbox{ as long as }\alpha(t)\ge -\delta,
\]
and $\frac{d\alpha}{dt}=-1$ when $\alpha(t)\le -\delta$.
If we write $\alpha_0(t)$ to denote the function solving the above ODE (with $\le$
replaced by $=$) with initial data $\alpha_0(t_0)=\delta$, then
$\alpha(t)\le \alpha_0(t)$ for $t\ge t_0$. This leads to
\[
\alpha(t)\le
\begin{cases}
\delta - \frac\lambda{1+\lambda}(t-t_0) &\mbox{ if } t_0 \le t \le t_a = t_0+ 2\delta
\\
\delta - \delta \frac{2\lambda}{1+\lambda}\exp(\frac{t-t_a}{2\delta}) &\mbox{ if }t_a\le t 
\end{cases}
\]
as long as $\alpha(t)\ge -\delta$. 

We now define $t_1$ to be the unique time such that $\alpha(t_1)=-\delta$,
and similarly $t_2$ such that $\alpha(t_2)=-2\delta$ (see Figure~\ref{fig:y_derivatives}). 
We deduce from the above that
\begin{equation}\label{tminust}
t_1 - t_0 \le 2\delta \left(
1 + \log(\frac{1+\lambda}{\lambda})\right),
\qquad t_2 - t_1 = \delta.
\end{equation}

Next we estimate $\theta^h(t_2)-\theta(t_2)$.
First note that
\[
0 \le u^h_\delta(t,x) - u_\delta(t,x)  = \frac 1{1+\lambda} \int_{x-t}^{x-t+c|h|}\eta_\delta(x')dx' \le \frac {c|h|}{2\delta}.
\]
We can similarly estimate $u_\delta(t,x') - u_\delta(t,x)$, to find that
\begin{align*}
\frac d{dt}(\theta^h - \theta) 
&=
[u^h_\delta(t, \theta^h) - u_\delta(t,\theta^h)] + [u_\delta(t,\theta^h) - u_\delta(t,\theta)]
\\
&\le \frac {c|h|}{2\delta} + \frac {1}{2\delta} (\theta^h-\theta).
\end{align*}
(We have implicitly used the fact that  $\theta^h(t)\ge \theta(t)$ for all $t$). Thus, Gr\"onwall's inequality implies that for $t>t_0$, 
\[
\theta^h(t) - \theta(t) \le  
 c|h| \left[ \exp\left(\frac{ t-t_0}{2\delta} 
\right)-1
\right].
\]
In particular, it follows from \eqref{tminust} that
\[
\theta^h(t_2)-\theta(t_2) \le
c|h| \left[ \exp\left(
\frac 32 + \log(\frac{1+\lambda}{\lambda})
 \right)-1
\right] \lesssim \frac {|h|}\lambda.
\]
Thus,
\beq
\label{eq:theta_h_minus_theta_t_1}
\theta^h(t_2) - t_2 = \theta^h(t_2) - \theta(t_2)  +\alpha(t_2)
	\le 
-2\delta+	c\frac{|h|}{\lambda} < -\delta
\eeq
for $h$ small enough.
Since $\theta^h(t) - t$ is a decreasing function, this inequality continues to hold after time $t_2$.
It then follows from the definitions that $u^h_\delta(t, \theta^h(t)) = u_\delta(t,\theta(t))= 0$
for $t\ge t_2$, and therefore
\[
\theta^h(1) - \theta(1) = \theta^h(t_2) - \theta(t_2) \lesssim \frac{|h|}{\lambda},
\]
which proves \eqref{eq:theta_h_minus_theta} and completes the proof.
\end{proof}

%%%%%%%%%%%%%%%%%%%%%%%%
We conclude this section by completing the proof of Proposition~\ref{pn:Theta}:
{\flushleft \emph{Proof of Proposition~\ref{pn:Theta}}:}
The structure \eqref{eq:Psi_Theta_Psi_form} of $\Psi^{-1}\circ \Theta\circ \Psi$ is immediate from the definitions of $\Psi$ and $\Theta$.
We see from \eqref{eq:theta_supp}  that $\supp \sigma$ is a subset of a $\delta$-thickening in the $x$-direction of $\supp \zeta_1$.
Therefore, \eqref{eq:zeta_1_bounds_1} implies $\supp \sigma\subset (0,1)\times S_1$ for small enough $\delta$. 
The first bound in \eqref{eq:bounds_sigma} follows from Lemma~\ref{lem:Theta_error}, the second from Lemma~\ref{lem:theta_x_bounds}, and the third from Lemma~\ref{lem:theta_y_bounds}, using the fact that $\Psi$ is linearly squeezing strips on which $\theta$ is supported by a factor of $e^{-\alpha} = k\lambda$.
{\hfill\ding{110}}
%%%%%%%%%%%%%%%%%%%%%%%%%%%%%%
\subsection{Step IV: Error correction --- affine homotopy}
\label{sec:Step_IV_2D}

In this subsection we correct the error obtained by the regularization in the previous subsection via affine homotopy, and then complete the proof.
The properties of the target of this affine homotopy, which follow from Proposition~\ref{pn:Theta}, are summed up in the following corollary:
\begin{corollary}
\label{cor:Gamma}
The diffeomorphism $\Gamma = \Psi^{-1}\circ \Theta \circ \Psi\circ \Phi_1^{-1}$ is of the form
\beq
\label{eq:Psi_Theta_Psi_form.b}
\Gamma = (\gamma(x,y), y) = (x + \xi(x,y), y),
\eeq
where $\xi(x,y) \ge 0$ is supported on $(0,1)\times S_1$ and satisfies
\beq
\label{eq:bounds_xi}
|\xi(x,y)| \lesssim \frac{\delta}{\lambda}, \qquad
-1 + C^{-1} < \pl_x \xi < C, \qquad
|\pl_y \xi| \lesssim k.
\eeq
\end{corollary}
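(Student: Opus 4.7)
The plan is to substitute the formula from Proposition~\ref{pn:Theta} into the definition of $\Gamma$ and verify each claim by a direct chain-rule calculation. Writing $\Phi_1^{-1}(x,y) = (\phi_1^{-1}(x,y), y)$, where for fixed $y$ the map $x \mapsto \phi_1^{-1}(x,y)$ inverts $x \mapsto x + \zeta_1(x,y)$, composing with $\Psi^{-1}\circ\Theta\circ\Psi$ from \eqref{eq:Psi_Theta_Psi_form} gives
\[
\Gamma(x,y) = \bigl(\phi_1^{-1}(x,y) + \sigma(\phi_1^{-1}(x,y),y),\, y\bigr),
\]
which, using the identity $\phi_1^{-1}(x,y) - x = -\zeta_1(\phi_1^{-1}(x,y),y)$, is of the form \eqref{eq:Psi_Theta_Psi_form.b} with
\[
\xi(x,y) = \sigma(\phi_1^{-1}(x,y),y) - \zeta_1(\phi_1^{-1}(x,y),y).
\]

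For the support claim I would note that $\supp \sigma, \supp \zeta_1 \subset (0,1) \times S_1$ by Proposition~\ref{pn:Theta} and \eqref{eq:zeta_1_bounds_1}, and that $\phi_1^{-1}$ fixes $y$ and acts trivially in $x$ off $(0,1) \times S_1$ (since $\zeta_1$ vanishes there); hence $\supp \xi \subset (0,1) \times S_1$. The pointwise bound $|\xi| \lesssim \delta/\lambda$ then follows immediately from $|\sigma - \zeta_1| \lesssim \delta/\lambda$, which is the first part of \eqref{eq:bounds_sigma}.

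For the derivative estimates, the chain rule applied to $\phi_1 \circ \phi_1^{-1} = \id$ gives
\[
\pl_x \phi_1^{-1} = \frac{1}{1 + (\pl_x \zeta_1)\circ \phi_1^{-1}}, \qquad
\pl_y \phi_1^{-1} = -\frac{(\pl_y \zeta_1)\circ \phi_1^{-1}}{1 + (\pl_x \zeta_1)\circ \phi_1^{-1}},
\]
so \eqref{eq:zeta_1_bounds_2} yields $\pl_x \phi_1^{-1} \in (C^{-1}, C)$ and $|\pl_y \phi_1^{-1}| \lesssim k$. Then the Jacobian identity
\[
1 + \pl_x \xi(x,y) = \pl_x \gamma(x,y) = \frac{1 + \pl_x \sigma}{1 + \pl_x \zeta_1}\bigg|_{(\phi_1^{-1}(x,y),y)}
\]
combined with the bounds \eqref{eq:bounds_sigma} and \eqref{eq:zeta_1_bounds_2} pins $1 + \pl_x \xi$ between positive constants, giving the middle bound in \eqref{eq:bounds_xi}. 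Expanding
\[
\pl_y \xi = (\pl_x \sigma - \pl_x \zeta_1)\,\pl_y \phi_1^{-1} + \pl_y \sigma - \pl_y \zeta_1
\]
and applying the same bounds together with $|\pl_y \phi_1^{-1}|\lesssim k$ yields $|\pl_y \xi|\lesssim k$.

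I do not anticipate any real obstacle: the corollary is essentially a bookkeeping step, repackaging the bounds on $\sigma$ supplied by Proposition~\ref{pn:Theta} after pre-composition with $\Phi_1^{-1}$. All of the analytic work is already done; what the corollary contributes is merely the observation that pre-composition with $\Phi_1^{-1}$ distorts the relevant norms of $\sigma - \zeta_1$ by factors that are uniformly bounded in $k$, $\lambda$, and $\delta$.
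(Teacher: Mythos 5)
Your proof is correct and fills in, via the expected chain-rule computation, exactly what the paper dismisses as ``immediate from Proposition~\ref{pn:Theta}, the definition of $\Phi_1$ and the bounds \eqref{eq:zeta_1_bounds_2}''; the identification $\xi = (\sigma-\zeta_1)\circ\Phi_1^{-1}$ and the resulting estimates are what the paper has in mind. One small caveat applying equally to the paper: the claim $\xi \ge 0$ does not actually follow from $|\sigma-\zeta_1|\lesssim\delta/\lambda$, since the sign of $\sigma-\zeta_1$ is not controlled (Lemma~\ref{lem:Theta_error} gives only a two-sided bound on $\theta - (x+\tzeta_1)$); but nonnegativity of $\xi$ is never used in Lemma~\ref{lem:dist_s_Gamma_id}, so this is harmless.
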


\begin{proof}
This is immediate from Proposition~\ref{pn:Theta}, the definition of $\Phi_1$ and the bounds \eqref{eq:zeta_1_bounds_2}.
\end{proof}

%%%%%%%%%%%%%%%%%%%%%%%%
\begin{lemma}
\label{lem:dist_s_Gamma_id}
\beq
\label{eq:dist_s_Gamma_id}
\dist_s(\Gamma,\id) \lesssim \frac{\delta^{1-s}}{\lambda^{1-s}}k^s.
\eeq
\end{lemma}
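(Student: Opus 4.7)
The plan is to estimate the $H^s$ cost of the affine homotopy
\[
\Gamma_t(x,y) = (x + t\xi(x,y), y), \qquad t\in [0,1],
\]
which connects $\id$ to $\Gamma$. Since $-1 + C^{-1} < \partial_x\xi < C$, each $\Gamma_t$ is a diffeomorphism, and the generating vector field $u_t = \partial_t \Gamma_t \circ \Gamma_t^{-1}$ has zero vertical component and horizontal component
\[
v_t(a,b) := \xi(x(a,b), b), \qquad\text{where } x(a,b) \text{ solves } x + t\xi(x,b) = a.
\]
By right-invariance of the metric, $\dist_s(\Gamma,\id) \le \int_0^1 \|v_t\|_{s,2}\,dt$, so it suffices to show $\|v_t\|_{s,2}\lesssim (\delta/\lambda)^{1-s}k^s$ uniformly in $t$, after which the bound on $\dist_s(\Gamma,\id)$ follows by integration.

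To estimate $\|v_t\|_{s,2}$ I will apply the interpolation inequality of Proposition~\ref{pn:GN_inequality} with $p=2$, so I need $L^2$ bounds on $v_t$ and $dv_t$. First, $\|v_t\|_\infty = \|\xi\|_\infty \lesssim \delta/\lambda$ by Corollary~\ref{cor:Gamma}. Second, implicit differentiation of $x + t\xi(x,b) = a$ gives
\[
\partial_a v_t = \frac{\partial_x \xi}{1 + t\,\partial_x \xi}, \qquad
\partial_b v_t = \frac{\partial_y \xi}{1 + t\,\partial_x \xi},
\]
and the lower bound $1+t\partial_x\xi \ge C^{-1}$ together with the bounds $|\partial_x\xi|\lesssim 1$ and $|\partial_y\xi|\lesssim k$ yields $\|dv_t\|_\infty \lesssim k$.

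The support of $v_t$ is $\Gamma_t(\supp \xi) \subset \Gamma_t((0,1)\times S_1)$; since $\Gamma_t$ differs from the identity by at most $\delta/\lambda \ll 1$ in the first coordinate, this support is contained in a fixed bounded set of measure $\lesssim |S_1| \lesssim 1$. Combining with the $L^\infty$ bounds gives
\[
\|v_t\|_{L^2} \lesssim \frac{\delta}{\lambda}, \qquad \|v_t\|_{1,2} \lesssim \frac{\delta}{\lambda} + k \lesssim k,
\]
where the last inequality uses $\delta \ll \lambda$ and $k \ge 1$. Proposition~\ref{pn:GN_inequality} then gives
\[
\|v_t\|_{s,2} \lesssim \|v_t\|_{L^2}^{1-s}\,\|v_t\|_{1,2}^s \lesssim \left(\frac{\delta}{\lambda}\right)^{1-s} k^s,
\]
uniformly in $t\in [0,1]$, and integrating in $t$ yields \eqref{eq:dist_s_Gamma_id}. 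No single step is a real obstacle; the main thing to watch is that the bound on the support of $v_t$ (uniform in $t$) together with the separation of scales $\delta \ll \lambda$ is what makes the interpolation quantitatively effective.
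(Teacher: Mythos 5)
Your proof is correct and follows essentially the same route as the paper's: you take the affine homotopy, identify the horizontal component of the generating velocity field, bound its $L^\infty$ norm by $\|\xi\|_\infty\lesssim\delta/\lambda$ and its gradient by $\lesssim k$ using the chain rule and the lower bound on $1+t\,\partial_x\xi$, observe the support lies in a fixed bounded set, and finish with Proposition~\ref{pn:GN_inequality}. The only cosmetic difference is that you simplify $\partial_b v_t$ to $\partial_y\xi/(1+t\,\partial_x\xi)$ via implicit differentiation, whereas the paper leaves it as $\partial_x\xi\cdot\partial_y\gamma_{t,y}^{-1}+\partial_y\xi$ and bounds the two terms separately; the resulting estimates are the same.
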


\begin{proof}
Consider an affine homotopy $\Gamma_t$ from $\id$ to $\Gamma$,
that is,
\[
\Gamma_t(x,y) = (x + t\xi(x,y),y) =: (\gamma_{t,y}(x),y)
\]
We then have $\pl_t \Gamma_t = u_t(\Gamma_t)$, where
\[
u_t(x,y) = (\xi(\Gamma_t^{-1}(x,y)),0) = (\xi(\gamma_{t,y}^{-1}(x),y),0).
\]
Note that $u_t$ is supported on a subset of the unit square, because $\xi$ is supported on a subset of the unit square and $\Gamma$ is a diffeomorphism of the unit square.
Since $|\xi|\lesssim \delta\lambda^{-1}$, we have
\beq
\label{eq:L_2_norm_affine}
\|u_t\|_{L^2} \lesssim \frac{\delta}{\lambda}.
\eeq
Next, we have 
\[
\pl_x u_t(x,y) = \pl_x \xi\, \pl_x \gamma_{t,y}^{-1}(x), 
\quad
\pl_y u_t(x,y) = \pl_x \xi\, \pl_y \gamma_{t,y}^{-1}(x) + \pl_y \xi.
\]
Since, by \eqref{eq:bounds_xi}, $-1+C^{-1}<\pl_x \xi<C$, we obtain that $|\pl_x \gamma_{t,y}^{-1}| = |1 + t\pl_x\xi|^{-1} <C$ and therefore $|\pl_x u|<C$.
Next, using \eqref{eq:bounds_xi} again, we have
\[
\Abs{\pl_y \gamma_{t,y}^{-1}(x)} \le \Abs{\frac{\pl_y \gamma_{t,y}}{\pl_x \gamma_{t,y}}} \lesssim k,
\]
and therefore $|\pl_y u_t| \lesssim k$. We conclude that
\beq
\label{eq:H_1_norm_affine}
\|u_t\|_{H^1} \lesssim k.
\eeq
Using Proposition~\ref{pn:GN_inequality}, \eqref{eq:L_2_norm_affine}--\eqref{eq:H_1_norm_affine} imply \eqref{eq:dist_s_Gamma_id}.
\end{proof}

%%%%%%%%%%%%%%%%%%%%%
We conclude now the proof of Theorem~\ref{thm:main_2D}.
We showed that
\[
\Phi_1 = \Gamma^{-1} \circ {\Psi}^{-1} \circ \Theta \circ \Psi, \qquad \Gamma,\Theta,\Psi \in \Diffc(\R^2),
\]
where (following Lemma~\ref{lem:squeezing_2D}, \eqref{eq:Theta_cost} and Lemma~\ref{lem:dist_s_Gamma_id})
\[
\dist_s(\Psi,\id) \lesssim \alpha k^{-(1-s)}, \qquad 
\dist_s(\Theta,\id) \lesssim \frac{k^{1/2}\lambda^{(2-s)/2}}{\delta^{s/2}}, \qquad 
\dist_s(\Gamma,\id) \lesssim \frac{\delta^{1-s}}{\lambda^{1-s}}k^s, \qquad \lambda = \frac{e^{-\alpha}}{k}.
\]
If we choose, say 
\[
\alpha = (\log k)^2, \qquad 
\lambda = \frac{1}{k^{1+\log k}}, \qquad 
\delta = \frac{1}{k^{\log k+\sqrt{\log k}}},
\]
we have, for any $s<1$,
\[
\begin{split}
\dist_s(\Psi,\id) &\lesssim (\log k)^2 k^{-(1-s)} = o(1), \\
\dist_s(\Theta,\id) &\lesssim k^{-(1-s)\log k + \frac{1}{2}s\sqrt{\log k}-\frac{1-s}{2}} = o(1), \\
\dist_s(\Gamma,\id) &\lesssim k^{1 -(1-s) \sqrt{\log k}} = o(1),
\end{split}
\]
and therefore $\dist_s(\Phi_1,\id) = o(1)$, which completes the proof.

\begin{remark}
Since we choose $\alpha$ and $\delta$ in an $s$-independent way, we constructed a sequence of paths from $\id$ to $\Phi$ that are of asymptotically vanishing $H^s$-cost for any $s<1$.
It follows that by choosing appropriate sequences of exponents $s_n \nearrow 1$ and constants $c_n \searrow 0$, we have
\[
\dist_{H^{<1}}(\Phi, \id) = 0,
\]
where the $H^{<1}$-norm is defined by
\[
\| f\|_{H^{<1}} := \sum_{n=1}^\infty c_n \| f\|_{H^{s_n}}.
\]
\end{remark}
%%%%%%%%%%%%%%%%%%%%%%%%%%%%%%%%%%%%%%%%%%%%%%%%%%%%%%%%%%%%%%%%%%%%%%%%%%%%%%%%%%%%%%%%%%
\section{Higher-dimensional construction}
\label{sec:HD}
In this section we present a simpler construction in $\R^n$ for $n\ge 3$.
Since we often want to split $\R^n = \R\times \R^{n-1}$, it is convenient to
write $m=n-1$.

\begin{theorem}
\label{thm:main_HD}
Let $n\ge 3$, and denote by $(x,y)$ the coordinates on $\R^{n}$, where $x\in \R$ and $y\in \R^m$.
Let $\zeta \in C_c^\infty((0,1)^{n})$ satisfying $\zeta \ge 0$, $\pl_1\zeta > -1$. 
Denote $\phi(x,y) = x + \zeta(x,y)$.
Define $\Phi\in \Diffc(\R^{1+m})$ by $\Phi(x,y) = (\phi(x,y),y)$.
Then $\dist_s(\Phi,\id) = 0$ for every $s\in[0,1)$.
\end{theorem}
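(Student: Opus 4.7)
The plan is to mirror the two-dimensional strategy of Section~\ref{sec:2dc}, but to exploit the fact that in dimensions $n\ge 3$ there are $m = n-1 \ge 2$ transverse $y$-directions available for squeezing. This extra freedom will allow us to dispense with both the mollification construction of Section~\ref{sec:Step_III_2D} and the error-correction step of Section~\ref{sec:Step_IV_2D}: after squeezing in all $m$ coordinates, the natural affine homotopy will already have small enough $L^2$ norm to absorb the $e^\alpha$ blow-up produced by $d\psi^{-1}$ via the interpolation inequality of Proposition~\ref{pn:GN_inequality}. Using Lemma~\ref{lm:right_invariance} together with a coordinate-wise generalization of the splitting $\Phi = \Phi_2\circ \Phi_1$ of Section~\ref{sec:Step_I_2D} (producing $2^m$ pieces, each with $y$-support concentrated near a single ``phase'' lattice in $(0,1)^m$), it suffices to treat a single such piece, which we continue to denote by $\Phi$.

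For the squeezing step I would define $\Psi(x,y) = (x,\psi(y))$ by composing $m$ one-dimensional squeezings constructed exactly as in Lemma~\ref{lem:squeezing_2D}, one per $y_j$-coordinate. Each factor has cost $\lesssim \alpha k^{-(1-s)}$ and compresses each $1/k$-interval around the appropriate phase point to an interval of length $\lambda := e^{-\alpha}/k$, so $\dist_s(\Psi,\id) \lesssim \alpha k^{-(1-s)}$, and the squeezed $y$-support $\psi(\supp_y\zeta)$ has $m$-dimensional Lebesgue measure $O((k\lambda)^m) = O(e^{-m\alpha})$. Writing $\Phi = \Psi^{-1}\circ \tilde\Phi \circ \Psi$ with $\tilde\Phi := \Psi\circ\Phi\circ\Psi^{-1}$, Lemma~\ref{lm:right_invariance} gives $\dist_s(\Phi,\id)\le 2\dist_s(\Psi,\id) + \dist_s(\tilde\Phi,\id)$, and it remains to estimate the latter.

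For the path from $\id$ to $\tilde\Phi$ I would use the conjugated affine homotopy $\tilde\Phi_t := \Psi\circ\Phi_t\circ\Psi^{-1}$, where $\Phi_t(x,y) = (x + t\zeta(x,y),y)$. Its velocity field points only in the $x$-direction,
\[
\tilde V_t(x,y) = \bigl(\zeta(\phi_t^{-1}(x,\psi^{-1}(y)),\,\psi^{-1}(y)),\,0\bigr), \qquad \phi_t(x,y) := x + t\zeta(x,y),
\]
and is supported on $(0,1)\times\psi(\supp_y\zeta)$, a set of measure $O(e^{-m\alpha})$. Since $|\tilde V_t|\le \|\zeta\|_\infty$ we get $\|\tilde V_t\|_{L^2}^2 \lesssim e^{-m\alpha}$. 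For derivatives, $\partial_x \tilde V_t = O(1)$ (the hypothesis $\partial_x\zeta > -1$ controls $\partial_x\phi_t^{-1}$), while each $\partial_{y_j}\tilde V_t$ picks up a single factor $\partial_y\psi^{-1} = O(e^\alpha)$; crucially, the factor $\partial_y\phi_t^{-1}$ produced by the chain rule is $O(1)$ independently of $\psi$, so no second factor of $e^\alpha$ appears. Thus $\|d\tilde V_t\|_{L^2}^2 \lesssim e^{2\alpha}\cdot e^{-m\alpha} = e^{-(m-2)\alpha}$, and Proposition~\ref{pn:GN_inequality} yields
\[
\|\tilde V_t\|_{H^s}^2 \lesssim \bigl(e^{-m\alpha}\bigr)^{1-s}\bigl(e^{-(m-2)\alpha}\bigr)^{s} = e^{-(m-2s)\alpha}.
\]
Since $m\ge 2$ and $s<1$, the exponent $m-2s$ is strictly positive; integrating in $t$ and choosing, e.g., $\alpha = \log k$ gives $\dist_s(\Phi,\id) \lesssim (\log k)\,k^{-(1-s)} + k^{-(m-2s)/2}\to 0$ as $k\to\infty$, as required.

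The main point requiring care is the derivative count above: the expensive $e^\alpha$ factor must appear only linearly, which hinges on defining the affine homotopy \emph{before} conjugating by $\Psi$ — a naive alternative in which one first conjugates and then affinely homotopes would produce two factors of $e^\alpha$ in the $y$-derivatives, precisely destroying the gain (and turning the exponent into $e^{-m\alpha + 4s\alpha/2}$, which is not useful for $s$ close to $1$). A second, more cosmetic issue is that the $2^m$-piece decomposition of $\Phi$ must be arranged so that the $y$-support of each piece lies entirely inside the squeezed regions of its associated $\Psi$, analogously to how $\supp \zeta_1 \subset (0,1)\times S_1$ in Section~\ref{sec:Step_I_2D}.
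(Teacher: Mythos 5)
Your overall strategy is exactly the paper's: split $\Phi$ into $2^m$ pieces supported on union-of-cubes regions, conjugate each piece by a squeezing diffeomorphism $\Psi$, and then use an affine homotopy for the conjugated piece. The approach is the right one. However, there is a genuine quantitative error in the derivative estimate that invalidates your choice of $\alpha$.

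The problem is in the bound $\|d\tilde V_t\|_{L^2}^2\lesssim e^{2\alpha}e^{-m\alpha}$. You first invoke a $2^m$-piece decomposition in which each $\zeta_I = \tilde\zeta_I\,\chi_I$ is localized to a union of $\approx k^m$ cubes of side $\approx 1/k$, and then ``continue to denote by $\Phi$'' one of these pieces. But this localization is achieved by cutoff functions $\chi_I$ that vary over scale $1/k$, so necessarily $|\partial_y\zeta_I|\lesssim k$ (cf.\ \eqref{eq:zeta_1_bounds_2_HD}), not $O(1)$. Consequently $\partial_{y_j}\tilde V_t$ is $O(k e^{\alpha}) = O(\lambda^{-1})$, not $O(e^\alpha)$: each of the two chain-rule terms carries $\nabla_y\zeta_I = O(k)$ as well as $\partial_y\psi^{-1}=O(e^\alpha)$. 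The correct bound is therefore $\|d\tilde V_t\|_{L^2}^2\lesssim k^2 e^{-(m-2)\alpha}$, and interpolation gives $\|\tilde V_t\|_{H^s}\lesssim k^{s}e^{-(m/2-s)\alpha}$, matching the paper's Lemma~\ref{lem:dist_s_Gamma_id_HD}. With your choice $\alpha=\log k$ this is $k^{2s-m/2}$, which for $m=2$ blows up when $s>1/2$ (and similarly fails for $m=3$, $s>3/4$). The repair is simply to take $\alpha$ growing faster than $\log k$, e.g.\ $\alpha=(\log k)^2$ as in the paper, which makes the exponent $s-(m/2-s)\log k\to-\infty$; the squeezing cost $\alpha k^{-(1-s)}$ remains $o(1)$.

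One smaller remark: your caution about ``first conjugating then affinely homotoping'' producing two factors of $e^\alpha$ is a red herring. Since $\Psi$ acts only on the $y$ variable and the affine homotopy acts only on $x$, the path $\Psi\circ\Phi_t\circ\Psi^{-1}$ coincides with the affine homotopy from $\id$ to $\Gamma=\Psi\circ\Phi\circ\Psi^{-1}$; they have literally the same velocity field, and both see only a single factor of $e^\alpha$. The paper in fact works with the latter description.
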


While in principle one can adjust the construction from the two-dimensional case to this setting, we can take advantage of the fact of the higher dimensionality to make a simpler construction, as outlined below: 
First, in Section~\ref{sec:Step_I_HD} we decompose $\Phi$ as follows:
\[
\Phi = \Phi_{2^m} \circ \ldots \circ \Phi_2\circ \Phi_1, \qquad \Phi_i = (\phi_i(x,y),y) = (x+\zeta_i(x,y),y) \in \Diffc(\R^{1+m}),
\]
where $\zeta_i$ is supported on the union of $\approx k^m$ "tubes" $(0,1)\times I_j$, where $I_j$ are $m$-dimensional cubes of edge length $\approx k^{-1}$. 
This is a generalization of the construction in Section~\ref{sec:Step_I_2D}.
In the rest of Section~\ref{sec:HD} we show that $\dist_s(\Phi_1,\id) = o(1)$ as $k\to \infty$, and the same holds for all the other $\Phi_i$s. 
Since $k$ is arbitrary, the conclusion $\dist_s(\Phi,\id) = 0$ follows by Lemma~\ref{lm:right_invariance}.

In order to prove $\dist_s(\Phi_1,\id) = o(1)$, we decompose $\Phi_1$ as
\[
\Phi_1 = \Psi^{-1} \circ \Gamma \circ \Psi, \qquad \Psi, \Gamma\in \Diffc(\R^{1+m}),
\]
where
\begin{enumerate}
\item $\Psi(x,y) = (x,\psi(x,y))$ squeezes the $m$-dimensional cubes $I_j$ on which $\Phi_1$ is supported by a factor of $k^{\log k}$.
In Section~\ref{sec:Step_II_HD}, we define $\Psi(x,y)$ and show that $\dist_s(\Phi,\id) \lesssim (\log k)^2 k^{-(1-s)} = o(1)$.
This is analogous to Section~\ref{sec:Step_II_2D}, with $\alpha = (\log k)^2$.
\item
$\Gamma = \Psi \circ \Phi_1 \circ \Psi^{-1}$. 
Unlike in the two-dimensional case, we do not have to construct a complicated flow along the strips (as in Section~\ref{sec:Step_III_2D}, which is the main part of the proof). This is because the squeezing in $m$-dimensions is enough to guarantee small norm, as explained in Section~\ref{sec:2dc}.
Instead, in Section~\ref{sec:Step_III_HD}, we show that the affine homotopy between $\id$ and $\Gamma$ is a path of small $H^s$ distance, and therefore $\dist_s(\Gamma,\id) \lesssim k^{s-(m/2 -s) \log k} = o(1)$.
\end{enumerate}
It then follows from Lemma~\ref{lm:right_invariance} that $\dist_s(\Phi_1,\id) = o(1)$.

%%%%%%%%%%%%%%%%%%%%%%%%%%%%%%
\subsection{Step I: Splitting into strips}
\label{sec:Step_I_HD}

Fix $k\in \N$, and consider the lattice $\frac{4}{k}\Z^m \subset \R^m$.
We partition $\Z^m$ into $2^m$ latices: 
\[
2\Z^m,\, 2\Z^m + e_1,\, \ldots,\, 2\Z^m + \sum_{i=1}^m e_i,
\]
where $\{e_i\}_{i=1}^m$ is the standard basis of $\R^m$,
and similarly for the lattice $\frac{4}{k}\Z^m$.
We index the different lattices as $Z_I$, $I\in \Z_2^m$, ordered by 
\[
(0,\ldots,0), (1,0,\ldots,0),  (0,1,0,\ldots,0), \ldots, (0,1,1,\ldots,1), (1,\ldots,1).
\]
Sometimes we will denote the indices by $1,\ldots, 2^m$ according to this order.
For each $I\in \Z^m_2$, denote 
\[
L_I := \brk{Z_I + \Brk{-2/k,2/k}^m} \cap [0,1]^m, \qquad S_I := \brk{Z_I + \brk{-3/k,3/k}^m} \cap [0,1]^m.
\]
Note that $\cup L_I = [0,1]^m$ and that $L_I$ may only intersect $L_J$ at its boundary.

We now define diffeomorphisms $\Phi_I(x,y) = (x + \zeta_I(x,y),y)$, such that $\Phi = \Phi_{2^m} \circ\ldots \circ \Phi_{1}$,
\beq
\label{eq:zeta_1_bounds_0_HD}
\Phi_{I} \circ\ldots \circ \Phi_{1} |_{(0,1)\times \cup_{J\le I} L_J} = \Phi,
\eeq
\beq
\label{eq:zeta_1_bounds_1_HD}
\supp(\zeta_I) \subset (0,1)\times S_I,
\eeq
and
\beq
\label{eq:zeta_1_bounds_2_HD}
0\le \zeta_I\le C,\quad
-1 + C^{-1}< \pl_x \zeta_I < C, \quad
|\pl_y \zeta_I| < Ck,
\eeq
for some $C$ independent of $k$.

Let $\chi_I(y)$ be a bump function such that $\chi_I|_{L_I} \equiv 1$, $\supp\chi_I \subset S_I$ and $|d\chi_I| < Ck$.
Define 
\[
\zeta_1(x,y) = \zeta(x,y) \chi_1(y).
\]
For $I=2,\ldots, 2^m -1$, define
\[
\tilde{\Phi}_I := \Phi\circ \Phi_1^{-1} \circ \ldots \circ \Phi_{I-1}^{-1} = (x+ \tilde{\zeta}_I(x,y),y),
\]
and then
\[
\zeta_I(x,y) = \tilde{\zeta}_I(x,y) \chi_I(y).
\]
Finally, define
\[
\Phi_{2^m} := \Phi\circ \Phi_1^{-1} \circ \ldots \circ \Phi_{2^m-1}^{-1}.
\]
A direct calculation shows that $\Phi_I$ satisfies  \eqref{eq:zeta_1_bounds_0_HD}-\eqref{eq:zeta_1_bounds_2_HD}.

In the rest of this section we are going to prove that $\dist_s(\Phi_1,\id) = o(1)$.
This relies only on properties \eqref{eq:zeta_1_bounds_1_HD}--\eqref{eq:zeta_1_bounds_2_HD}, hence the result also applies to $\Phi_I$, for all $I\in \Z_2^m$, since $\zeta_I$ satisfies the same assumptions.

%%%%%%%%%%%%%%%%%%%%%%%%%%%%%%
\subsection{Step II: Squeezing the strips}
\label{sec:Step_II_HD}

\begin{lemma}
\label{lem:squeezing_HD}
Fix $\alpha \gg 1$. 
There exists a diffeomorphism $\Psi\in \Diffc(\R^{1+m})$, $\Psi(x,y) = (x,\psi(x,y))$, such that
\beq
\label{eq:squeezing_HD}
\psi(x,y) = e^{-\alpha}\brk{y-z} + z,
\eeq
for every $x\in [0,1]$ and $y\in S_1$ such that $z\in \frac{8}{k}\Z^m$ is the closest element to $y$ in $\frac{8}{k}\Z^m$.
Moreover,
\beq
\label{eq:squeezing_HD_H_s_dist}
\dist_s(\Psi,\id) \lesssim \alpha k^{-(1-s)}. 
\eeq
\end{lemma}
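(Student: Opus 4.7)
The construction is a direct $m$-dimensional analogue of the 2D squeezing argument in Lemma~\ref{lem:squeezing_2D}. The plan is to build a time-independent vector field on $\R^{1+m}$ whose time-$1$ flow realizes the linear contraction \eqref{eq:squeezing_HD}, and whose $H^s$-norm satisfies \eqref{eq:squeezing_HD_H_s_dist} thanks to the Gagliardo--Nirenberg inequality.

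First I would construct a vector field $v:\R^m\to\R^m$ with $v(y)=-y$ on $[-3,3]^m$, smoothly cut off to have support in $[-4,4]^m$, and then extended periodically with period $8$ in each coordinate. Fix $\chi\in C_c^\infty(\R^{1+m})$ with $\chi\equiv 1$ on $[0,1]^{1+m}$, and set
\[
u_k(x,y) := \frac{\alpha}{k}\,\chi(x,y)\,\bigl(0,\,v(ky)\bigr).
\]
Since the $x$-component of $u_k$ vanishes, the time-$1$ flow $\Psi$ of $u_k$ preserves the $x$-coordinate and takes the form $\Psi(x,y)=(x,\psi(x,y))$.

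Next I would check \eqref{eq:squeezing_HD}. On $[0,1]\times[0,1]^m$ we have $\chi\equiv 1$, so the $y$-component of the flow satisfies $\partial_t y = \frac{\alpha}{k}\,v(ky)$. For $y\in S_1$ close to a point $z\in\frac{8}{k}\Z^m$, periodicity of $v$ together with the identity $v(w)=-w$ on $[-3,3]^m$ gives $v(ky)=-k(y-z)$, whence $\partial_t(y-z)=-\alpha(y-z)$ and so $\psi(x,y)=z+e^{-\alpha}(y-z)$ at $t=1$. The trajectory stays inside the contraction region throughout the flow because the vector field is radially inward toward $z$.

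Finally I would estimate the Sobolev norm. A change of variables $w=ky$ over the compact set $\supp\chi$ shows
\[
\|u_k\|_{L^2(\R^{1+m})}\lesssim \frac{\alpha}{k},\qquad \|du_k\|_{L^2(\R^{1+m})}\lesssim \alpha,
\]
where the $k^{-m}$ Jacobian cancels the growth of the periodic integral over a region of $w$-volume of order $k^m$. Proposition~\ref{pn:GN_inequality} then yields $\|u_k\|_{H^s}\lesssim \alpha\,k^{-(1-s)}$, and integrating this bound along the trajectory $\Psi_t$ from $\id$ to $\Psi$ produces \eqref{eq:squeezing_HD_H_s_dist}. There is no serious obstacle beyond bookkeeping; the argument is in fact simpler than its 2D counterpart since the vector field is time-independent and the contraction takes place inside a single coordinate chart.
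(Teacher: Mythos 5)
Your construction is exactly the paper's: the same periodic cutoff vector field $u_k(x,y)=\frac{\alpha}{k}u_1(ky)\chi(x,y)$, the same $L^2$ and $\dot H^1$ bounds of order $\alpha/k$ and $\alpha$, and the same application of Proposition~\ref{pn:GN_inequality}, which is precisely what the paper means by ``the proof continues in the same way as the proof of Lemma~\ref{lem:squeezing_2D}.'' The only slip is cosmetic: the 2D squeezing flow is also autonomous, so the step is not actually ``simpler'' on that account, but this does not affect the argument.
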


\begin{proof}
Let $u_1 \in C_c^\infty((-4,4)^m)$, such that $u_1(y) = -y$ for $y\in [-3,3]^m$, and extend periodically to $\R^m$. 
Let $\chi \in C_c^\infty(\R^{1+m})$ such that $\chi\equiv 1$ on $[0,1]^{1+m}$.
Define $u_k(x,y) := \frac{\alpha}{k} u_1(ky)\chi(x,y)$.
The proof continues in the same way as the proof of Lemma~\ref{lem:squeezing_2D}.
\end{proof}

Note that in $[0,1]^{1+m}$, $\psi$ is independent of $x$. Therefore, slightly abusing notation, we write
\[
\Psi(x,y) = (x,\psi(y)), \qquad \Psi^{-1}(x,y) = (x,\psi^{-1}(y)).
\]
We will later have $\alpha$ depend on $k$.

%%%%%%%%%%%%%%%%%%%%%%%%%%%%%%
\subsection{Step III: Affine homotopy}
\label{sec:Step_III_HD}

\begin{lemma}
\label{lem:dist_s_Gamma_id_HD}
\[
\dist_s(\Gamma,\id) \lesssim  k^{m/2}\lambda^{m/2-s} = k^s e^{-(m/2-s)\alpha}
\]
where $\Gamma = \Psi \circ \Phi_1 \circ \Psi^{-1}$ and $\lambda = e^{-\alpha}/k$.
\end{lemma}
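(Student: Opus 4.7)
The plan is to mimic the affine-homotopy argument of Lemma~\ref{lem:dist_s_Gamma_id}, exploiting the fact that after squeezing, the support of $\Gamma$ is contained in a thin set in $m=n-1$ directions, which makes $\|u_t\|_{L^2}$ very small. First I would write $\Gamma$ explicitly. From the formulas for $\Psi$ and $\Phi_1$, a short computation gives
\[
\Gamma(x,y) = (x + \tilde\zeta(x,y),\,y), \qquad \tilde\zeta(x,y) := \zeta_1\bigl(x,\psi^{-1}(y)\bigr).
\]
Since $\supp \zeta_1 \subset (0,1)\times S_1$ and $\psi$ compresses each cube of $S_1$ around its center by the factor $e^{-\alpha}=k\lambda$, the support of $\tilde\zeta$ lies in $(0,1)\times \psi(S_1)$, a union of $\approx k^m$ disjoint cubes of edge length $\approx\lambda$, hence of total Lebesgue measure $\lesssim k^m\lambda^m$. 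The chain rule, together with \eqref{eq:zeta_1_bounds_2_HD} and $\|d\psi^{-1}\|_\infty \lesssim e^\alpha = 1/(k\lambda)$, yields
\[
0\le \tilde\zeta \le C,\qquad -1+C^{-1}<\pl_x\tilde\zeta<C,\qquad |\pl_{y_i}\tilde\zeta|\lesssim k\cdot e^\alpha = \frac1\lambda.
\]

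Next I would run the affine homotopy $\Gamma_t(x,y) = (x+t\tilde\zeta(x,y),y)$, whose generating vector field is
\[
u_t(x,y) = \bigl(\tilde\zeta(\Gamma_t^{-1}(x,y)),\,0\bigr),
\]
supported in the same kind of set of measure $\lesssim k^m\lambda^m$. Exactly as in the proof of Lemma~\ref{lem:dist_s_Gamma_id}, the bound $-1+C^{-1}<\pl_x\tilde\zeta<C$ gives $|\pl_x\gamma_{t,y}^{-1}|\le C$, and hence $|\pl_x u_t|\le C$; while $|\pl_{y_i}\gamma_{t,y}^{-1}|\lesssim |\pl_{y_i}\tilde\zeta|/|\pl_x\gamma_{t,y}| \lesssim 1/\lambda$, so $|\pl_{y_i}u_t|\lesssim 1/\lambda$. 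Combining with the measure estimate,
\[
\|u_t\|_{L^2}^2 \lesssim k^m\lambda^m, \qquad \|u_t\|_{H^1}^2 \lesssim \frac{k^m\lambda^m}{\lambda^2} = k^m\lambda^{m-2}.
\]

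Finally I would apply the Gagliardo--Nirenberg interpolation inequality (Proposition~\ref{pn:GN_inequality}):
\[
\|u_t\|_{H^s}^2 \lesssim \bigl(k^m\lambda^m\bigr)^{1-s}\bigl(k^m\lambda^{m-2}\bigr)^{s} = k^m\lambda^{m-2s},
\]
so integrating in $t\in[0,1]$ and substituting $\lambda = e^{-\alpha}/k$:
\[
\dist_s(\Gamma,\id)\lesssim k^{m/2}\lambda^{m/2-s} = k^s\,e^{-(m/2-s)\alpha}.
\]

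There is no real obstacle here: the only thing to check is that the exponent $m/2-s$ is positive, which holds automatically since $m = n-1 \ge 2$ and $s<1$, so $m/2-s \ge 1 - s > 0$. This positivity is precisely what makes the higher-dimensional argument simpler than the $n=2$ case: no flow along the squeezed tubes is needed, because the squeezing in $m\ge 2$ transverse directions alone is enough to make $\|u_t\|_{H^s}$ decay as $\alpha\to\infty$, rather than requiring the more delicate compression-and-transport scheme of Section~\ref{sec:2dc}.
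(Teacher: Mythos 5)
Your proof is correct and follows essentially the same route as the paper: both identify $\Gamma = (x+\zeta_1(x,\psi^{-1}(y)),y)$, record the support measure $\lesssim k^m\lambda^m$ and the derivative bounds \eqref{eq:xi_bounds_HD}, and then run the affine homotopy and interpolation exactly as in Lemma~\ref{lem:dist_s_Gamma_id}. The only difference is that you carry out the $L^2$/$H^1$ computation explicitly where the paper simply points back to the two-dimensional lemma.
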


\begin{proof}
Note that
\[
\Gamma = (x + \zeta_1(x,\psi^{-1}(y)),y), 
\]
and denote
\[
\xi(x,y) := \zeta_1(x,\psi^{-1}(y)), \qquad \gamma(x,y) = x + \zeta_1(x,\psi^{-1}(y)).
\]
It follows from the definitions of $\zeta_1$ \eqref{eq:zeta_1_bounds_1_HD} and $\psi$ \eqref{eq:squeezing_HD} that $\xi$ is supported inside $(0,1)\times \psi(S_1)$, i.e., inside $\approx k^m$ "tubes" which are translations of $(0,1) \times [-3\lambda,3\lambda]^m$.
In particular,
\beq
\label{eq:xi_support_HD}
\Vol(\supp \xi) \lesssim k^m\lambda^m.
\eeq
Furthermore, as in \eqref{eq:tilde_zeta_1_bounds}, we have from \eqref{eq:zeta_1_bounds_2_HD} that
\beq
\label{eq:xi_bounds_HD}
0\le \xi\le C,\quad
-1 + C^{-1}< \pl_x \xi < C, \quad
|\pl_y \xi| < C\lambda^{-1}.
\eeq
Consider now an affine homotopy $\Gamma_t$ from $\id$ to $\Gamma$,
that is,
\[
\Gamma_t(x,y) = (x + t\xi(x,y),y).
\]
The same calculation as in Lemma~\ref{lem:dist_s_Gamma_id}, using the estimates \eqref{eq:xi_support_HD}--\eqref{eq:xi_bounds_HD}, yields the wanted bound on $\dist_s(\id,\Gamma)$.
\end{proof}

%%%%%%%%%%%%%%%%%%
We conclude now the proof of Theorem~\ref{thm:main_HD}.
We showed that
\[
\Phi_1 = \Psi^{-1} \circ \Gamma \circ \Psi,
\]
where (following Lemmas~\ref{lem:squeezing_HD}~\ref{lem:dist_s_Gamma_id_HD})
\[
\dist_s(\Psi,\id) \lesssim \alpha k^{-(1-s)}, \qquad 
\dist_s(\Gamma,\id) \lesssim  k^s e^{-(m/2-s)\alpha}.
\]
Recall that $m = n-1\ge 2$ by hypothesis. If we choose, say 
\[
\alpha = (\log k)^2,
\]
we have, for any $s<1$,
\[
\begin{split}
\dist_s(\Psi,\id) &\lesssim (\log k)^2 k^{-(1-s)} = o(1), \\
\dist_s(\Gamma,\id) &\lesssim k^{s-(m/2-s)\log k} = o(1),
\end{split}
\]
and therefore $\dist_s(\Phi_1,\id) = o(1)$, which completes the proof.

%%%%%%%%%%%%%%%%%%%%%%%%%%%%%%%%%%%%%%%%%%%%%%%%%%%%%%%%%%%%%%%%%%%%%%%%%%%%%%%%%%%%%%%%%%%%%%%%%%%%%%%%%%%%%%
\section{The construction for $W^{s,p}(\R^n)$, $n \ge 2$.}\label{sec:Wsp}

In this section we explain how to modify the arguments presented above
in order to extend our earlier construction to the induced $W^{s,p}$
geodesic distance on $\Diffc(\R^n)$ for $n\ge 2$.

\begin{theorem}
\label{thm:main_HD2}
Let $n\ge 2$, and denote by $(x,y)$ the coordinates on $\R^{n}$, where $x\in \R$ and $y\in \R^m$ for $m = n-1$.
Let $\zeta \in C_c^\infty((0,1)^{n})$ satisfying $\zeta \ge 0$, $\pl_1\zeta > -1$. 
Denote $\phi(x,y) = x + \zeta(x,y)$.
Define $\Phi\in \Diffc(\R^{n})$ by $\Phi(x,y) = (\phi(x,y),y)$.
Then $\dist_{s,p}(\Phi,\id) = 0$ for every $s\in[0,1)$ and $p\ge 1$ such that 
$sp<n$.
\end{theorem}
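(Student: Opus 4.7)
The plan is to hybridize the two constructions already developed: use the higher-dimensional tube decomposition of Section~\ref{sec:Step_I_HD} to split $\Phi$ into factors supported on thin tubes, then apply the three-stage scheme (squeeze, flow, error-correct) of Section~\ref{sec:2dc} to each factor, now estimated in $W^{s,p}$ via Proposition~\ref{pn:GN_inequality}. The case $p=1$ is handled by a limiting argument.

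First, as in Section~\ref{sec:Step_I_HD}, I decompose $\Phi=\Phi_{2^m}\circ\cdots\circ\Phi_1$ with $m=n-1$, where each $\Phi_I=(x+\zeta_I(x,y),y)$ is supported on $\sim k^m$ disjoint tubes of cross-section $\sim k^{-m}$ and satisfies \eqref{eq:zeta_1_bounds_1_HD}--\eqref{eq:zeta_1_bounds_2_HD}. For each $\Phi_I$ I write $\Phi_I=\Gamma^{-1}\circ\Psi^{-1}\circ\Theta\circ\Psi$ with $\Psi$, $\Theta$, $\Gamma$ built as in Section~\ref{sec:2dc}, except that $\Psi$ squeezes the $m$ transverse directions (as in Lemma~\ref{lem:squeezing_HD}). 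The squeezing vector field satisfies $\|u_k\|_{L^p}\lesssim\alpha/k$ and $\|u_k\|_{W^{1,p}}\lesssim\alpha$, so Proposition~\ref{pn:GN_inequality} immediately yields $\dist_{s,p}(\Psi,\id)\lesssim\alpha k^{-(1-s)}$, as in the $p=2$ case.

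The main new work is the flow step $\Theta$. The mollified vector field $u_{\delta,t}$ of~\eqref{eq:u_delta_def} acts in the $x$-direction, with $y\in\R^m$ squeezed into $\lambda$-cubes, $\lambda=e^{-\alpha}/k$. At each time $t$ it is supported on $\sim k^m$ sets contained in regions of measure $\sim\lambda^n$; its gradient is bounded by $1/\delta$ and is supported on a $\delta$-thickening of the boundaries, of total measure $\sim k^m\delta\lambda^m$. This gives
\[
\|u_{\delta,t}\|_{L^p}^p\lesssim k^m\lambda^n,\qquad \|\nabla u_{\delta,t}\|_{L^p}^p\lesssim k^m\delta^{1-p}\lambda^m,
\]
and Proposition~\ref{pn:GN_inequality} then yields $\|u_{\delta,t}\|_{W^{s,p}}\lesssim k^{m/p}\lambda^{(n-s)/p}\delta^{-s(p-1)/p}$, which bounds $\dist_{s,p}(\Theta,\id)$. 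The pointwise mollification-error estimates on $\theta$ of Lemmas~\ref{lem:Theta_error}--\ref{lem:theta_y_bounds} are $p$-independent and carry over unchanged, so the affine-homotopy argument of Lemma~\ref{lem:dist_s_Gamma_id} gives $\dist_{s,p}(\Gamma,\id)\lesssim(\delta/\lambda)^{1-s}k^s$ exactly as before.

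Taking the Section~\ref{sec:Step_IV_2D} parameters $\alpha=(\log k)^2$, $\lambda=k^{-1-\log k}$, $\delta=k^{-\log k-\sqrt{\log k}}$, the $\Psi$ and $\Gamma$ contributions vanish for every $s<1$; substituting into the $\Theta$ bound, its log-rate in $k$ is $\tfrac{sp-n}{p}\log k+O(\sqrt{\log k})$, which tends to $-\infty$ \emph{precisely} when $sp<n$. This handles $1<p<\infty$. For $p=1$ with $s<1\le n$, I pick any $p_0\in(1,\infty)$ with $sp_0<n$; since the vector fields above are supported in a fixed compactum $K\subset\R^n$, H\"older applied on $K$ (and on the Gagliardo double-integral, using compact support of the integrand in $x$ and the $y$-locality at scale $|x-y|\ll 1$ in $K$) gives $\|u\|_{W^{s,1}}\lesssim_K\|u\|_{W^{s,p_0}}$, reducing $p=1$ to the case already proved. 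The main obstacle I expect is the precise accounting of the support geometry of $u_{\delta,t}$ in the $W^{1,p}$ estimate---especially separating the flow direction from the $m$ squeezed transverse directions---and verifying that the H\"older reduction to $p=1$ really controls the fractional seminorm.
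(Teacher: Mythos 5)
Your proof is essentially identical to the paper's: the same $2^m$-tube decomposition from Section~\ref{sec:Step_I_HD}, the same squeeze/flow/error-correct factorization $\Phi_I=\Gamma^{-1}\circ\Psi^{-1}\circ\Theta\circ\Psi$ with $W^{s,p}$ estimates obtained by applying Proposition~\ref{pn:GN_inequality} to the same velocity fields, the same parameter choices $\alpha=(\log k)^2$, $\lambda=k^{-1-\log k}$, $\delta=k^{-\log k-\sqrt{\log k}}$, and the same reduction of $p=1$ to $p>1$ via compact support and H\"older. One small caution shared with the paper: the inequality $\|u\|_{s,1}\lesssim_K\|u\|_{s,q}$ for $q>1$ at the \emph{same} $s$ does not follow from the direct H\"older you sketch, since splitting the Gagliardo weight as $|x-y|^{-(n+sq)/q}\cdot|x-y|^{-n/q'}$ leaves a non-integrable $\int|h|^{-n}dh$ near the diagonal even after localizing in $x$; one should instead compare $\|u\|_{s,1}$ to $\|u\|_{s',q}$ for some $s'\in(s,\min\{1,n/q\})$, where the near-diagonal weight becomes $|h|^{-n+(s'-s)q'}$ and the argument closes cleanly.
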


As explained at the end of Section \ref{sec_2}, this will complete the proof of Theorem 
\ref{main_thm}.

We will use the interpolation inequality of Proposition \ref{pn:GN_inequality}
to estimate $W^{s,p}$-norms. This is not valid for $p=1$, but for functions
$u$ with compact support, it follows easily from the definition \eqref{def:fractional_Sobolev} and H\"older's inequality that
$\| u\|_{s,1} \le C(q,\supp(u)) \| u\|_{s,q}$ for every $q>1$, so the $p=1$ case
follows from estimating $\| u\|_{s,q}$ for $q>1$, for $q$ close enough to $1$ (in the construction below the vector fields are independent of the exponent).

\begin{proof}

{\bf 1. Splitting into strips and squeezing the strips}

Fix $k\in \N$.
We start exactly as in Section \ref{sec:Step_I_HD} by writing 
$\Phi = \Phi_{2^m} \circ\ldots \circ \Phi_{1}$,
where $\Phi_I$ satisfies \eqref{eq:zeta_1_bounds_1_HD}, \eqref{eq:zeta_1_bounds_2_HD} for $I = 1,\ldots, 2^m$.

It now suffices to show that $\dist_{s,p}(\id, \Phi_1) = o(1)$
as $k\to \infty$, at a rate that depends only on the  
constants in 
\eqref{eq:zeta_1_bounds_1_HD}, \eqref{eq:zeta_1_bounds_2_HD},
and that thus applies to $\Phi_2,\ldots, \Phi_{2^m}$ as well.

To do this, we start with the (higher-dimensional) squeezing diffeomorphism $\Psi$ from Lemma
\ref{lem:squeezing_HD}. Then the interpolation inequality from Proposition
\ref{pn:GN_inequality} yields
\begin{equation}\label{Psi.HD}
\dist_{s,p}(\Psi,\id) \lesssim \alpha k^{-(1-s)}\qquad\mbox{ for all $p\in (1,\infty)$}.
\end{equation}

{\bf 2. Flowing along the squeezed strips}.

We will now follow the procedure of Section \ref{sec:2dc} and  write
\begin{equation}\label{GD}
\Phi_1 = \Gamma^{-1} \circ {\Psi}^{-1} \circ \Theta \circ \Psi, \qquad \Gamma,\Theta,\Psi \in \Diffc(\R^2),
\end{equation}
where the construction of $\Theta, \Gamma$ and accompanying estimates closely follow the two-dimensional constructions in Sections \ref{sec:Step_III_2D} and  \ref{sec:Step_IV_2D}.

In more detail, to define $\Theta$, we first define $\ttheta(t,x,y)$  and $u(t,x,y)$
as in \eqref{eq:ttheta_def} and \eqref{eq:def_u}, with the only difference that now 
$y\in \R^{n-1}$. We then define $u_\delta$ as in
\eqref{eq:u_delta_def}, by convolving $u$ (in the $x$ variable only)
with a mollifier $\eta_\delta$.
Finally, we let $\theta(t,x,y)$ solve the ODE \eqref{eq:theta_def},
and we define
$\Theta(x,y) = (\theta(x,y,1), y)$.

Then Lemma~\ref{lem:properties_g} holds as is, and in
Lemma~\ref{lem:bounds_u_delta}, \eqref{eq:bounds_du_delta} holds and \eqref{eq:bounds_H_s_norm_u_delta} becomes
\[
\|u_{\delta,t}\|_{W^{s,p}}^p \lesssim \frac{k^{n-1} \lambda^{n-s}}{\delta^{(p-1)s}},
\]
and hence,
\[
\dist_{s,p}(\Psi,\id) \lesssim \frac{k^{(n-1)/p} \lambda^{(n-s)/p}}{\delta^{(p-1)s/p}}.
\]
{\bf 3. Error correction --- affine homotopy}

We define $\Gamma$ by \eqref{GD}, and we estimate $\dist_{s,p}(\id, \Gamma)$
by using an affine homotopy. 
Lemmas~\ref{lem:Theta_error}--\ref{lem:theta_y_bounds} hold as is, hence Proposition~\ref{pn:Theta} and Corollary~\ref{cor:Gamma} as well.
 Lemma~\ref{lem:dist_s_Gamma_id} holds as well, yielding
\[
\dist_{s,p}(\Gamma,\id) \lesssim \frac{\delta^{1-s}}{\gamma^{1-s}}k^s.
\]
The estimate is independent of $p\in (1,\infty)$ and $n$
as a consequence of the fact that the velocity field $u_t$, $0\le t\le 1$
associated to the affine homotopy (which in fact does not depend on $t$)
satisfies estimates that are uniform in $p$ and $n$. This follows from easy
modifications of the proofs of
\eqref{eq:L_2_norm_affine}, \eqref{eq:H_1_norm_affine}.
The constant in the above inequality does depend on $p$ through the dependence on the constant in the interpolation inequality.

{\bf 4. Conclusion of the proof}

Again, choosing 
\[
\alpha = (\log k)^2, \qquad \lambda = \frac{1}{k^{1+\log k}} \qquad \delta = \frac{1}{k^{\log k+\sqrt{\log k}}},
\]
we have, for any $s<\min\BRK{n/p,1}$,
\[
\begin{split}
\dist_{s,p}(\Psi,\id) &\lesssim (\log k)^2 k^{-(1-s)} = o(1), \\
\dist_{s,p}(\Theta,\id) &\lesssim k^{-\brk{\frac{n}{p}-s}\log k + \frac{p-1}{p}s\sqrt{\log k}-\frac{1-s}{p}} = o(1), \\
\dist_{s,p}(\Gamma,\id) &\lesssim k^{1 -(1-s) \sqrt{\log k}} = o(1),
\end{split}
\]
and therefore $\dist_{s,p}(\Phi_1,\id) = o(1)$.
\end{proof}

In the far subcritical regime $s<\min\BRK{(n-1)/p,1}$,
one can also give a simpler construction, like that of Section \ref{sec:HD},
in which the flow along the squeezed strips is carried out by an affine homotopy, and
no error-correction is needed at the end. 
Again, this is because the $(n-1)$-dimensional squeezing of the second step is enough to guarantee a small norm for the affine homotopy, since $W^{s,p}(\R^{n-1})$ is subcritical.
We do not think this has any deeper meaning besides the obvious observation that the weaker the norm is, the easier it is to construct paths of short length.

\paragraph{Acknowledgements}
We are grateful to Meital Kuchar for her help with the figures, and to the anonymous referee for their helpful comments.
This work was partially supported by the Natural Sciences and Engineering Research Council of Canada under operating grant 261955.

%%%%%%%%%%%%%%%%%%%%%%%%%%%%%%%%%%%%%%%%%%%%%%

{\footnotesize
\bibliographystyle{amsalpha}
\providecommand{\bysame}{\leavevmode\hbox to3em{\hrulefill}\thinspace}
\providecommand{\MR}{\relax\ifhmode\unskip\space\fi MR }
% \MRhref is called by the amsart/book/proc definition of \MR.
\providecommand{\MRhref}[2]{%
  \href{http://www.ams.org/mathscinet-getitem?mr=#1}{#2}
}
\providecommand{\href}[2]{#2}

}
\end{document}